\def\co{\colon\thinspace}
\DeclareMathAlphabet{\mathsfsl}{OT1}{cmss}{m}{sl}
\newcommand{\tensor}[1]{\mathsfsl{#1}}
\newcommand{\spinc}{{\mathrm{Spin}^c}}
\newcommand{\relspin}{\underline{\mathrm{Spin}^c}}
\newcommand{\Bslash}{\backslash\hspace{-3pt}\backslash}
\newtheorem{thm}{Theorem}[section]
\newtheorem{lem}[thm]{Lemma}
\newtheorem{prop}[thm]{Proposition}
\newtheorem{ques}[thm]{Question}
\theoremstyle{definition}
\newtheorem{defn}[thm]{Definition}
\newtheorem{notn}[thm]{Notation}
\newtheorem{rem}[thm]{Remark}
\newtheorem{construction}[thm]{Construction}
\begin{document}

\title{Some applications of Gabai's internal hierarchy}

\author{{\Large Yi NI}\\{\normalsize Department of Mathematics, Caltech, MC 253-37}\\
{\normalsize 1200 E California Blvd, Pasadena, CA
91125}\\{\small\it Email\/:\quad\rm yini@caltech.edu}}

\date{}
\maketitle

\begin{abstract}
Any Haken $3$--manifold (possibly with boundary consisting of tori) can be transformed into a $\mathrm{surface}\times S^1$ by a series of splitting and regluing along incompressible surfaces. This fact was proved by Gabai as an application of his sutured manifold theory. The first half of this paper provides a few technical details in the proof. In the second half of this paper, some applications of Gabai's theorem to Heegaard Floer homology are given. We refine the known results about the Thurson norm and fibrations. We also give some classification results for Floer simple knots in manifolds with positive $b_1$.
\end{abstract}

\section{Introduction}

Sutured manifold theory was introduced by Gabai \cite{G1} in order to construct taut foliations. In recent years, this theory has led to a lot of discoveries in gauge theory and Floer homology. In these applications, there are typically two ways to use sutured manifold theory. One way is to use the existence of taut foliations on closed manifolds \cite{KMNorm,OSzGenus}, the other way is to define an invariant for sutured manifolds and to study the decomposition formula for the invariant \cite{NiFibred,Ju2,KMSuture}. However, sometimes we find it convenient to directly work with closed manifolds without referring to taut foliations. In \cite{GabaiHier}, as a byproduct of the sutured manifold theory, Gabai introduced an internal hierarchy for closed Haken $3$--manifolds (or manifolds with boundary consisting of tori). This theory turns out to be useful in Floer homology as an alternative approach to the applications mentioned above.

In this paper, we will give an exposition of Gabai's internal hierarchy, then discuss some of its applications to Heegaard Floer homology. First of all, let us state Gabai's theorem \cite{GabaiHier}.

\begin{thm}[Gabai]\label{thm:GabaiHier}
Let $M$ be a Haken $3$--manifold such that $\partial M$ is a possibly empty union of tori, then there exists a sequence $M=M_1, \dots, M_n$ such that $M_{i+1}$ is
obtained from $M_i$ by splitting and regluing along a connected incompressible surface
and $M_n$ is homeomorphic to $\text{surface}\times S^1$.
\end{thm}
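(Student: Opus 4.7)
The plan is to deduce Theorem~\ref{thm:GabaiHier} from Gabai's sutured manifold machinery, iterating sutured manifold decompositions and reinterpreting each one as a splitting--regluing in the ambient closed manifold.

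First, I would turn $M$ into a taut sutured manifold. Since $M$ is Haken, there exists a connected two-sided incompressible surface $R_1\subset M$ representing a nontrivial class in $H_2(M,\partial M)$, which we may arrange to be Thurston-norm minimizing. Splitting $M$ along $R_1$ produces a taut sutured manifold $(N_1,\gamma_1)$ whose $R_+\cup R_-$ consists of two parallel copies of $R_1$, with annular sutures on the torus boundary components of $M$. Closing $(N_1,\gamma_1)$ back up by the identity identification $R_+\to R_-$ recovers $M$, so we set $M_1=M$ with this reference identification.

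Next, apply Gabai's existence theorem for sutured manifold hierarchies to $(N_1,\gamma_1)$. This yields a sequence
\[
(N_1,\gamma_1)\overset{S_1}{\leadsto}(N_2,\gamma_2)\overset{S_2}{\leadsto}\cdots\overset{S_{n-1}}{\leadsto}(N_n,\gamma_n)
\]
of taut sutured manifold decompositions along connected surfaces $S_i$, in which the terminal $(N_n,\gamma_n)$ is a product sutured manifold $F\times[0,1]$. The core of the argument is to package each decomposition step as a single split-and-reglue of a closed manifold. Inductively, suppose we have produced $M_i$ together with an identification of $M_i$ with the closure of $(N_i,\gamma_i)$ by a homeomorphism $\varphi_i\co R_+(\gamma_i)\to R_-(\gamma_i)$. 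Then $S_i$ extends to a connected incompressible surface $\Sigma_i\subset M_i$ by joining its boundary arcs in $R_\pm$ via $\varphi_i$; splitting $M_i$ along $\Sigma_i$ recovers $N_i$ decomposed along $S_i$, namely $N_{i+1}$; and a judicious choice of new identification $\varphi_{i+1}$ on $R_\pm(\gamma_{i+1})$ closes $N_{i+1}$ up to the next manifold $M_{i+1}$. Thus each hierarchy step becomes one splitting-and-regluing along $\Sigma_i$.

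At the final stage, $F\times[0,1]$ closed up by the identity between $F\times\{1\}$ and $F\times\{0\}$ is $F\times S^1$, which completes the advertised transformation. The principal obstacle, and the point at which extra care is required, is the bookkeeping in the inductive step: one must verify that the decomposing surfaces $S_i$ can be taken \emph{connected} (Gabai's hierarchy does not furnish this for free and must be refined), that each $\Sigma_i$ remains incompressible in $M_i$ after the previous regluings, that annular sutures along the torus components behave coherently under closing up so that $\partial M_i$ stays a union of tori, and that no parasitic sutures or extra components are introduced that cannot be absorbed into a single split--reglue move. Filling in precisely these points, particularly the treatment of sutures meeting boundary tori and the connectedness/non-separating properties of the surfaces at each stage, is where the technical details promised in the first half of the paper must be supplied.
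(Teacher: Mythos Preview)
Your proposal contains a genuine structural gap. The central claim of the inductive step---that splitting $M_i$ along $\Sigma_i$ yields $N_{i+1}$---is false. If $M_i$ is the closed-up $(N_i,\gamma_i)$ and $\Sigma_i$ is the surface obtained from $S_i$ by identifying $\partial_+S_i$ with $\partial_-S_i$ via $\varphi_i$, then $M_i\Bslash\Sigma_i$ still has $R_+(\gamma_i)\setminus\partial_+S_i$ glued to $R_-(\gamma_i)\setminus\partial_-S_i$; its new boundary consists only of two copies of $\Sigma_i$, whereas $\partial N_{i+1}$ contains, in addition to two copies of $S_i$, the cut-open copies of $R_\pm(\gamma_i)$. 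An Euler-characteristic count confirms the mismatch: $\chi(\partial N_{i+1})=\chi(\partial N_i)+2\chi(S_i)$, while $\chi\big(\partial(M_i\Bslash\Sigma_i)\big)=2\chi(\Sigma_i)=2\chi(S_i)$; these differ by $2\chi(G)\ne 0$. So $M_{i+1}$ as you define it is not a cut--reglue of $M_i$ along $\Sigma_i$.

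Equally serious is the hidden hypothesis that $\Sigma_i$ exists: for $S_i$ to close up in $M_i$ you need $\varphi_i(\partial_+S_i)=\partial_-S_i$ as curve systems on $G$. This cannot be imposed on a sutured hierarchy chosen in advance; the curves $\partial_\pm S_i$ need not correspond under any surface homeomorphism (they need not even be homologous in $M$). Arranging this compatibility is precisely the technical heart of the paper. One must first find primitive classes $c_\pm\in H_1(G_\pm)$ with $c_+=c_-$ in $H_1(M;\mathbb Q)$ (Proposition~\ref{prop:HomoC}), then build a taut $S$ with $\partial_\pm S$ consisting of parallel copies of curves in these classes, and only then choose the gluing $f\co G_+\to G_-$ to match them. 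The paper's move (Construction~\ref{constr:Reglue}) then cuts and reglues along $G$, not along $S$: one forms $Y'$ from $M=Y\Bslash G$ via $f$, inside which $S$ closes up to $\overline S$, and the \emph{next} reference surface $G'$ is the cut-and-paste of $\overline S$ with $G$. Diagram~(\ref{eq:CommDiag}) shows that $Y'\Bslash G'$ differs from $M\Bslash S$ only by product decompositions, which is what drives the reduced complexity down. In short, the splitting--regluing in Theorem~\ref{thm:GabaiHier} is along the evolving fiber-like surfaces $G_i$, with the hierarchy surfaces used only to manufacture each new $G_{i+1}$; your scheme reverses these roles and does not go through as written.
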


Later, we will state and prove a more precise version of the above theorem in a special case, Theorem~\ref{thm:PreciseGabai}. This version immediately allows us to reprove the fact that Heegaard Floer homology detects the Thurston norm of a closed $3$--manifold \cite{OSzGenus}. This approach also allows us to refine the results about Thurston norm and fibrations by taking the homological action into consideration. See Section~\ref{Sect:Norm} for more detail.

The new results we will prove are about Floer simple knots.
Suppose that $K$ is a rationally null-homologous knot in $Y$,
Ozsv\'ath--Szab\'o \cite{OSzKnot,OSzRatSurg} and Rasmussen
\cite{RasThesis} showed that $K$ specifies a filtration on
$\widehat{CF}(Y)$. The homology of the associated graded chain complex is the knot Floer homology $\widehat{HFK}(Y,K)$.
From the construction of knot Floer homology, one sees that
$$\mathrm{rank}\:\widehat{HFK}(Y,K)\ge\mathrm{rank}\:\widehat{HF}(Y),$$
for any rationally null-homologous knot $K\subset Y$. When the
equality holds, we say that the knot has {\it simple} knot Floer
homology, or this knot is {\it Floer simple}.

Clearly, the unknot in $Y$ is always Floer simple. 
Sometimes there are nontrivial Floer simple knots. For example, 
the core of a solid torus in the genus--$1$ Heegaard splitting of a lens space is Floer simple. Moreover, if two knots $(Y_1,K_1)$ and $(Y_2,K_2)$ are 
Floer simple, then their connected sum $(Y_1\#Y_2,K_1\#K_2)$ is also Floer simple. In particular, $(Y_1\#Y_2,K_1)$ is Floer simple.

It is an interesting problem to determine all Floer simple knots.
For example, Hedden \cite{HedBerge} and Rasmussen \cite{RasBerge}
showed that if a knot $L\subset S^3$ admits an integral lens space
surgery, then the core of the surgery is a Floer simple knot in
the lens space. Hence the classification of Floer simple knots in
lens spaces could lead to a resolution of Berge's conjecture on
lens space surgery.

A deep theorem of Ozsv\'ath--Szab\'o
\cite[Theorem~1.2]{OSzGenus} implies that the only Floer simple
knot in $S^3$ is the unknot. 
The author \cite{NiBorro} classified Floer simple knots in $\#^nS^1\times S^2$: they are essentially the Borromean knots.

Our main new result is the following theorem.

\begin{thm}\label{thm:Nullhomotopic}
Suppose that $Y$ is a closed irreducible $3$--manifold with nonzero Thurston norm, and that $K\subset Y$ is a {\bf null-homotopic} knot. If $K$ is Floer simple, namely, $$\mathrm{rank}\:\widehat{HFK}(Y,K)=\mathrm{rank}\:\widehat{HF}(Y),$$
then $K$ is the unknot.
\end{thm}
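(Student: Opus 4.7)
Argue by contradiction: assume $K\subset Y$ is non-trivial, null-homotopic, and Floer simple.

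\emph{Reduction to the non-local case.} If $K$ is contained in an embedded $3$-ball $B\subset Y$, then $(Y,K) \cong (Y,U_Y) \# (S^3, K_0)$, where $U_Y$ is the unknot in $Y$ and $K_0\subset S^3$ is the knot type of $K$ inside $B$. The connected-sum formula for knot Floer homology gives
$$
\widehat{HFK}(Y,K) \cong \widehat{HF}(Y) \otimes \widehat{HFK}(S^3,K_0).
$$
Since $\widehat{HF}(Y)\neq 0$, Floer simplicity forces $\mathrm{rank}\,\widehat{HFK}(S^3,K_0)=1$; Ozsv\'ath--Szab\'o's unknot detection theorem in $S^3$ then makes $K_0$ unknotted, so $K$ is the unknot in $Y$, contradicting non-triviality. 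Henceforth I may assume $X := Y\setminus N(K)$ is irreducible, i.e., $K$ is not contained in any $3$-ball.

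\emph{Haken structure of $X$ and $\pi_1$-triviality of $\partial X$.} Since $K$ is null-homotopic in $Y$, both the meridian and the Seifert longitude of $K$ are null-homotopic in $Y$, so the inclusion $\pi_1(\partial X) \to \pi_1(Y)$ is trivial. Moreover, if $\partial X$ were compressible in $X$, then since $X$ is irreducible, $X$ would be a solid torus, and $Y = N(K)\cup X$ would be a union of two solid tori --- hence $S^3$, $S^1\times S^2$, or a lens space, all of which have zero Thurston norm, contradicting the hypothesis on $Y$. Thus $X$ is a Haken manifold with incompressible torus boundary.

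\emph{Applying Gabai's hierarchy.} Now apply Gabai's internal hierarchy (Theorem~\ref{thm:GabaiHier}, or its refined form Theorem~\ref{thm:PreciseGabai}) to $X$, starting with a minimal-genus Seifert surface $\Sigma$ of $K$, of genus $g = g(K)\ge 1$, as the first splitting surface. Identify $\widehat{HFK}(Y,K)$ with the sutured Floer homology of $X$ equipped with two meridional sutures, and compare it with $\widehat{HF}(Y)$ through the hierarchy by repeated application of the sutured decomposition formula. By Ozsv\'ath--Szab\'o's genus-detection theorem, $\widehat{HFK}(Y,K,g)\neq 0$, so the top Alexander summand is already non-empty. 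The main obstacle is to combine the non-zero Thurston norm of $Y$ (via the refined Thurston-norm detection established earlier in the paper) with the null-homotopy of $K$ --- which prevents the Alexander grading from being realized by any genuine class in $H_2(Y)$ and hence from being absorbed by the $\mathrm{Spin}^c$-decomposition of $\widehat{HF}(Y)$ --- to produce a strict inequality $\mathrm{rank}\,\widehat{HFK}(Y,K) > \mathrm{rank}\,\widehat{HF}(Y)$. This contradicts Floer simplicity and completes the argument.
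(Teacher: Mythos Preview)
Your first two paragraphs are fine and match the paper's opening moves: dispose of the local case, then observe that $X=Y\Bslash K$ is irreducible with incompressible torus boundary.

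The third paragraph, however, is not a proof. You apply the hierarchy to the knot complement $X$, start with a Seifert surface, invoke sutured Floer decomposition in general terms, and then write ``The main obstacle is to combine\ldots'' --- that sentence is an admission that the argument is incomplete, not a resolution of it. Nothing in what you wrote produces the strict inequality $\mathrm{rank}\,\widehat{HFK}(Y,K)>\mathrm{rank}\,\widehat{HF}(Y)$; you have only asserted that it should follow. In particular, running the hierarchy on $X$ ends at some $\text{surface}\times S^1$ with torus boundary, and you give no mechanism for comparing that back to $\widehat{HF}(Y)$.

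The paper's route is substantially different from what you sketched. The nonzero Thurston norm is used at the outset to pick a closed taut surface $G\subset Y$ of genus $>1$ (not a Seifert surface for $K$). Floer simplicity forces $\mathcal B_{(Y,K)}(h)=\mathcal B_Y(h)$, so the Thurston norm of $[G]$ in $X$ equals that in $Y$ and $G$ can be isotoped off $K$ (Lemma~\ref{lem:DisjointTaut}). One then runs a version of the internal hierarchy on the \emph{closed} manifold $Y$, carrying $K$ along: at each step one cuts $Y$ along $G$, reglues, and passes to a new taut surface $G'$, while keeping $K$ disjoint from the surfaces and bottommostly Floer simple relative to the new class (Proposition~\ref{prop:DecrCWKnot}). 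The hypothesis that $K$ is null-homotopic is used precisely here: since $G$ is incompressible, $K$ is null-homotopic in $M=Y\Bslash G$, so any decomposing surface $S\subset M$ can be tubed to avoid $K$ without changing $\partial S$. The process terminates at a surface bundle $(Y_n,G_n)$ with $K_n$ nontrivial and bottommostly Floer simple relative to the fiber, and that case is handled by a separate argument (Theorem~\ref{thm:BundleCase}/Theorem~\ref{thm:BundBottom}) analyzing $CFK^{\infty}$ in the extremal $\spinc$ structure. None of these steps appears in your sketch.
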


The basic strategy of the proof is to use Theorem~\ref{thm:GabaiHier} to reduce the question to the case where $Y$ is a surface bundle over $S^1$. In this case we have the following theorem.

\begin{thm}\label{thm:BundleCase}
Suppose that $Y$ is a closed surface bundle over $S^1$ with fiber of genus $>1$, and that $K\subset Y$ is a rationally null-homologous knot. If $K$ is Floer simple, then $K$ is the unknot.
\end{thm}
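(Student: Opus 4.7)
The plan is to exploit the simplicity of $\widehat{HF}$ for a surface bundle with fiber of genus $>1$ in the extremal $\spinc$ structure, then to invoke a fibered-knot detection theorem, and finally to conclude by comparing the resulting fibration of the knot complement with the ambient bundle structure. To begin, I would use the $\spinc$ refinement of Floer simplicity: the Alexander filtration spectral sequence $\widehat{HFK}(Y,K)\Rightarrow\widehat{HF}(Y)$ respects the $\spinc$ decomposition, so Floer simplicity forces its collapse in each summand, giving
\[
\mathrm{rank}\,\widehat{HFK}(Y,K,\mathfrak{s}) = \mathrm{rank}\,\widehat{HF}(Y,\mathfrak{s}) \quad\text{for every } \mathfrak{s}\in\spinc(Y).
\]
Let $\mathfrak{s}_0$ be the $\spinc$ structure with $\langle c_1(\mathfrak{s}_0),[F]\rangle = 2g-2$, where $F$ is a fiber. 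Ozsv\'ath--Szab\'o's calculation for fibered $3$--manifolds with high-genus fiber gives $\widehat{HF}(Y,\mathfrak{s}_0)\cong\mathbb{Z}$, so $\widehat{HFK}(Y,K,\mathfrak{s}_0)\cong\mathbb{Z}$, concentrated at a single Alexander grading.

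I would view this monicity in the extremal relative $\spinc$ structure as the input to a fibered-knot detection theorem: Ni's theorem \cite{NiFibred} in the integrally null-homologous case, and its sutured Floer generalization in the rationally null-homologous case. The conclusion is that the knot complement $M = Y\setminus\mathrm{int}\,\nu(K)$ fibers over $S^1$, with fiber a minimal-complexity rational Seifert surface $\Sigma$ for $K$; equivalently, decomposing the sutured manifold $(M,\gamma)$ with meridional sutures along $\Sigma$ produces a product sutured manifold.

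To finish, I would show that $\Sigma$ is a disk, whence $K$ is the unknot. Since $[K]\cdot[F]=0$, after isotopy $F$ meets $K$ transversely in algebraically zero points, and $F\cap M$ is a surface in $M$ with boundary on $\partial M$; the monicity of $\widehat{HFK}$ in $\mathfrak{s}_0$ implies that $[F\cap M]$ lies in the same fibered face of the Thurston norm ball of $M$ as $[\Sigma]$. Combining the ambient identity $\langle c_1(\mathfrak{s}_0),[F]\rangle = 2g-2$ with the analogous relative evaluation $\langle c_1(\mathfrak{s}_0|_M),[\Sigma]\rangle = -\chi(\Sigma)$, and accounting for the $K\cap F$ intersections, a Thurston norm comparison forces $g(\Sigma)=0$, so $\Sigma$ is a disk bounding a longitude of $K$.

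The main obstacle is this last step, where one must relate $[F]\in H_2(Y)$ to $[\Sigma]\in H_2(M,\partial M)$—classes of genuinely different types—across the gluing $Y = M\cup\nu(K)$, while carefully tracking $\spinc$ structures and Thurston norms. An additional subtlety arises when $K$ is only rationally (not integrally) null-homologous: the rational Seifert surface may have boundary a $(p,q)$--torus curve on $\partial\nu(K)$ rather than a longitude, complicating both the Euler characteristic comparison and the identification of the relevant fibered face.
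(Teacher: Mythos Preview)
Your central computation is incorrect: for a surface bundle $Y$ with fiber $F$ of genus $g>1$, the extremal $\spinc$ structure $\mathfrak s_0$ is non-torsion, so $HF^{\infty}(Y,\mathfrak s_0)=0$ and the Ozsv\'ath--Szab\'o calculation gives $HF^+(Y,\mathfrak s_0)\cong\mathbb Z$, \emph{not} $\widehat{HF}(Y,\mathfrak s_0)\cong\mathbb Z$. From the long exact sequence one gets $\widehat{HF}(Y,\mathfrak s_0)\cong\mathbb Z^2$ (this is also stated explicitly in the paper). Hence Floer simplicity only yields $\widehat{HFK}(Y,K,\mathfrak s_0)\cong\mathbb Z^2$, and your ``monicity'' input to the fibered-knot detection theorem is simply not there. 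Worse, even after one shows (as the paper does in Lemma~\ref{lem:UniqMin}) that these two copies of $\mathbb Z$ sit in a \emph{single} relative $\spinc$ structure, the group there has rank $2$, so Ni's fibered-detection hypothesis still fails. There is also a conceptual conflation in your sketch: $\mathfrak s_0$ is extremal with respect to the fiber class $[F]$, whereas the fibered-knot theorem requires monicity in the extremal \emph{Alexander} grading, i.e.\ with respect to a Seifert surface class $[\Sigma]$; these live in different directions of $H^2(Y,K)$ and you never connect them.

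The paper's argument is quite different and avoids fibered-knot detection entirely. It first analyzes $CFK^{\infty}(Y,K,\mathfrak s_{\min})$ to prove that the two generators of $\widehat{HFK}(Y,K,\mathfrak s_{\min})\cong\mathbb Z^2$ lie in the same relative $\spinc$ structure $\xi_{\min}$ (Lemma~\ref{lem:UniqMin}); this uses the structure of the $U$--action and the fact that $HF^+(Y,\mathfrak s_{\min})\cong\mathbb Z$. Then, knowing that $c_1(\xi_{\min})$ is the \emph{unique} bottommost basic class with respect to $n[G]\pm[F]$ for large $n$, it evaluates $c_1(\xi_{\min})$ on these classes using the min/max formula of Proposition~\ref{prop:ExtremeSpinc} and obtains the contradiction $-2n\chi_-(G)\le -2n\chi_-(G)-2$ unless $K$ is the unknot. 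Your final ``Thurston norm comparison'' paragraph gestures toward something like this, but without the uniqueness of $\xi_{\min}$ (which is the real content) and without the asymmetric max formula in Proposition~\ref{prop:ExtremeSpinc}, no such comparison goes through.
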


We note that the condition in Theorem~\ref{thm:Nullhomotopic} that $K$ is null-homotopic seems not to be very essential. It is possible to replace it with a much weaker condition. In fact, we expect a negative answer to the following question.

\begin{ques}
Suppose that $Y$ is a closed irreducible $3$--manifold with nonzero Thurston norm. Is there a nontrivial rationally null-homologous Floer simple knot $K\subset Y$?
\end{ques}

\subsection{Comparison with previous works}

As we mentioned at the beginning, there have been a lot of approaches of using sutured manifold theory in Floer homology. 

In \cite{KMNorm,OSzGenus}, the starting point is the existence of taut foliations, which implies the existence of weakly semi-fillable contact structures by the work of Eliashberg and Thurston \cite{ET}. Then one can use deep results in contact and symplectic topology to get the desired conclusion. This approach can be used in quite general cases. For technical reasons, Ozsv\'ath and Szab\'o \cite{OSzGenus} only stated the result about Thurston norm for twisted Heegaard Floer homology. 
Using the Universal Coefficients Theorem, we can get the results for untwisted Heegaard Floer homology \cite{NiNormCos}. More technical issues appear in this approach if we want to consider the homological actions on the Floer homology like what we do in Section~\ref{Sect:Norm}.

In \cite{Ju1}, Juh\'asz defined an invariant for a class of sutured manifolds, following directly the construction of Heegaard Floer homology \cite{OSzAnn1}. A decomposition formula for this invariant was proved in \cite{Ju2}. Such a formula can be used to reprove the results about genus and fibered knots. The class of sutured manifolds studied in \cite{Ju1,Ju2} is a little bit special: every boundary component of the sutured manifold must contain at least one suture. We need to do more in order to make this approach applicable to closed $3$--manifolds. The first step would be defining the invariant for more general sutured manifolds. See Lekili \cite{Lekili} for some results in this direction.

There is a way to define sutured manifold invariants indirectly as in \cite[Proposition~2.9]{NiFibred} and \cite{KMSuture}. Using results like Proposition~\ref{prop:CutReglue}, one can construct a closed $3$--manifold and regard the sutured manifold invariant as a ``bottommost'' summand of the Heegaard Floer homology of a closed $3$--manifold. This approach has the advantage that it avoids the technical difficulties in defining the invariant directly. One can prove that the ``total'' invariant defined in this way is isomorphic to the ``total'' invariant defined in the previous approach \cite{Ju2,NiClosedFib,Lekili}. On the other hand, the relative Spin$^c$ structures on the sutured manifold are not seen in the closed manifold. So this approach is ``coarser'' than the previous one.

The approach taken in our paper is a variant of the approach taken by Kronheimer and Mrowka \cite{KMSuture}. The only difference here is the use of Gabai's internal hierarchy, which allows us to stay in the world of closed $3$--manifolds. For example, in order to deal with closed manifolds in \cite[Theorem~7.21]{KMSuture}, Kronheimer and Mrowka use the trick of doubling the corresponding sutured manifolds. It is shown that the Floer homology of the doubled manifold has a direct summand $\mathbb C$, so the Floer homology of the original manifold is nonzero by the excision theorem. Our approach here directly shows that the Floer homology has a direct summand $\mathbb Z$ or $\mathbb C$, see Section~\ref{Sect:Norm}.

\subsection{Outline of the paper}

This paper is organized as follows. In Section~\ref{Sect:Sut}, we give a review of sutured manifold theory. In Section~\ref{Sect:Sketch}, we state a precise version of Gabai's theorem and sketch the proof as in \cite{GabaiHier}. Sections~\ref{Sect:FindC} and \ref{Sect:DecCx} are devoted to the technical details in the proof. In Section~\ref{Sect:Norm}, we switch to the Heegaard Floer world. After recalling the results about Thurston norm and fibrations, we apply the hierarchy to refine these results. In Section~\ref{Sect:FSimple}, we prove our theorems about Floer simple knots.

\vspace{5pt}\noindent{\bf Acknowledgements.}\quad This work was started when the author visited Simons Center for Geometry and Physics. The author is grateful to Zolt\'an Szab\'o for asking the question which motivated this work, and to Mirela \c{C}iperiani and David Gabai for helpful conversations. The author thanks Liling Gu for pointing out a mistake in an earlier version of this paper, and the referee for the comments which helped to improve the exposition. The author was
partially supported by an AIM Five-Year Fellowship and NSF grant
numbers DMS-1021956 and DMS-1103976.


\section{Sutured manifold theory}\label{Sect:Sut}

In this section, we review some of the basic materials in sutured manifold theory.

We begin by fixing the notation.

\begin{notn}
Let $M$ be a manifold possibly with boundary. Let $\mathrm{int}(M)$ be the interior of $M$. Let $|M|$ be the number of components of $M$.
Suppose that $N$ is a submanifold of $M$. Let $\nu(N)$ be a closed tubular neighborhood of $N$ in $M$, and let $M\Bslash N=\overline{M\backslash \nu(N)}$. 
\end{notn}

\begin{defn}
A {\it sutured manifold} $(M,\gamma)$ is a compact oriented
3--manifold $M$ together with a set $\gamma\subset \partial M$ of
pairwise disjoint annuli $A(\gamma)$ and tori $T(\gamma)$. The
core of each component of $A(\gamma)$ is a {\it suture}, and the
set of sutures is denoted by $s(\gamma)$. We often omit $\gamma$ when it is understood from the context.

Every component of $R(\gamma)=\partial M-\mathrm{int}(\gamma)$ is
oriented. Define $R_+(\gamma)$ (or $R_-(\gamma)$) to be the union
of those components of $R(\gamma)$ whose normal vectors point out
of (or into) $M$. The orientations on $R(\gamma)$ must be coherent
with respect to $s(\gamma)$. Occasionally, we also denote $R(\gamma)$ by $R(M,\gamma)$ or $R(M)$ if there is no confusion.
\end{defn}

As an example, let $S$ be a compact oriented surface, $M=S\times I$, $\gamma=(\partial S)\times I$,
$R_-(\gamma)=S\times0, R_+(\gamma)=S\times 1$, then $(M,\gamma)$ is a sutured manifold. In this case we say that $(M,\gamma)$ is a {\it product sutured manifold}.

\begin{defn}
Let $(M,\gamma)$ be a sutured manifold, and $S$ a properly
embedded surface in M, such that no component of $\partial S$
bounds a disk in $R(\gamma)$ and no component of $S$ is a disk
with boundary in $R(\gamma)$. Suppose that for every component
$\lambda$ of $S\cap\gamma$, one of 1)--3) holds:

1) $\lambda$ is a properly embedded non-separating arc in
$\gamma$.

2) $\lambda$ is a simple closed curve in an annular component $A$
of $\gamma$ in the same homology class as $A\cap s(\gamma)$.

3) $\lambda$ is a homotopically nontrivial curve in a toral
component $T$ of $\gamma$, and if $\delta$ is another component of
$T\cap S$, then $\lambda$ and $\delta$ are coherently oriented.

Then $S$ is called a {\it decomposing surface}, and $S$ defines a
{\it sutured manifold decomposition}
$$(M,\gamma)\stackrel{S}{\rightsquigarrow}(M',\gamma'),$$
where $M'=M-\mathrm{int}(\nu(S))$ and
\begin{eqnarray*}
\gamma'\;\;&=&(\gamma\cap M')\cup \nu(S'_+\cap
R_-(\gamma))\cup
\nu(S'_-\cap R_+(\gamma)),\\
R_+(\gamma')&=&((R_+(\gamma)\cap M')\cup S'_+)-\mathrm{int}(\gamma'),\\
R_-(\gamma')&=&((R_-(\gamma)\cap M')\cup
S'_-)-\mathrm{int}(\gamma'),
\end{eqnarray*}
where $S'_+$ ($S'_-$) is that component of
$\partial\nu(S)\cap M'$ whose normal vector points out of
(into) $M'$.
\end{defn}

\begin{defn}
Let $S$ be a compact oriented surface with connected components
$S_1,\dots,S_n$. We define
$$\chi_-(S)=\sum_i\max\{0,-\chi(S_i)\}.$$
Let $M$ be a compact oriented 3--manifold, $A$ be a compact
codimension--0 submanifold of $\partial M$. Let $h\in H_2(M,A)$.
The {\it Thurston norm} $\chi_-(h)$ of $h$ is defined to be the minimal
value of $\chi_-(S)$, where $S$ runs over all the properly embedded
surfaces in $M$ with $\partial S\subset A$ and $[S]=h$.
\end{defn}

\begin{defn}
Suppose $M$ is a compact $3$--manifold, a properly embedded
surface $S\subset M$ is {\it taut} if $\chi_-(S)=\chi_-([S])$ in
$H_2(M,\partial S)$, no proper subsurface of $S$ is
null-homologous, and if any component of $S$ lies in a homology
class that is represented by an embedded sphere then this
component is a sphere.
\end{defn}

\begin{defn}
A sutured manifold $(M,\gamma)$ is {\it taut}, if $M$ is
irreducible and $R(\gamma)$ is  Thurston norm
minimizing in $H_2(M,\gamma)$.

Suppose $S$ is a decomposing surface in $(M,\gamma)$, $S$
decomposes $(M,\gamma)$ into $(M',\gamma')$. We say $S$ is {\it taut} if
$(M',\gamma')$ is taut. In this case we also say that the sutured manifold decomposition is {\it taut}.
\end{defn}

\begin{defn}
Suppose $C$ is a properly embedded curve in a compact surface $F$.
We say $C$ is {\it efficient} in $F$ if
$$|C\cap\delta|=|[C]\cdot[\delta]|,\quad\text{for each boundary component $\delta$ of $F$.}$$
\end{defn}

Let us recall a basic existence theorem for taut decompositions from Gabai \cite{G1,G2}.

\begin{thm}\label{thm:ExistTaut}
Suppose $(M,\gamma)$ is a taut sutured manifold. Let $\lambda\subset R(\gamma)$ be a set of pairwise disjoint simple essential curves in $R(\gamma)$ such that $\lambda$ is efficient. Suppose $[\lambda]\in H_1(R(\gamma),\partial R(\gamma))$ is nonzero and lies in the image of the map $$\partial\co H_2(M,\partial M)\to H_1(\partial M,\gamma)\cong H_1(R(\gamma),\partial R(\gamma)),$$ then there exists a taut surface $T\subset M$ such that $T\cap R(\gamma)=\lambda$. Moreover, $T$ can be chosen so that every component of $T$ intersects $R(\gamma)$.
\end{thm}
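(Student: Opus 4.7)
The plan is to construct $T$ in three stages: produce a candidate surface via Thurston's theorem, clean up its intersections with $R(\gamma)$ by oriented cut-and-paste so that they equal $\lambda$ exactly, and finally discard any stray components that miss $R(\gamma)$.

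First, since $[\lambda]$ lies in the image of $\partial\co H_2(M,\partial M)\to H_1(R(\gamma),\partial R(\gamma))$, choose a class $h\in H_2(M,\partial M)$ with $\partial h=[\lambda]$. Thurston's theorem yields a properly embedded oriented surface $S\subset M$ with $[S]=h$ and $\chi_-(S)=\chi_-(h)$. Adjust $\partial S$ so that its intersection with $R(\gamma)$ is $\lambda$ itself; the efficiency hypothesis $|\lambda\cap\delta|=|[\lambda]\cdot[\delta]|$ for each component $\delta$ of $\partial R(\gamma)\subset s(\gamma)$ is exactly what lets one route $\partial S$ through each annular component of $\gamma$ without introducing excess transverse intersections with sutures, and then extend over the toral components coherently.

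Second, the tautness of $(M,\gamma)$ says that $R_+(\gamma)$ and $R_-(\gamma)$ are Thurston norm-minimizing in their homology classes. Perform oriented double-curve sum of $S$ with parallel copies of $R_\pm(\gamma)$ to eliminate every component of $S\cap R(\gamma)$ that is inessential in $R(\gamma)$, every cancelling pair of arcs on the sutures, and any disk component of $S$ with boundary in $R(\gamma)$. These moves preserve $[S]$ and do not raise $\chi_-(S)$; because $\lambda$ is essential and efficient, they can be arranged to leave $S\cap R(\gamma)=\lambda$ as sets. The standard decomposition-taut lemma of Gabai then applies: when a norm-minimizing $S$ meets a norm-minimizing $R(\gamma)$ in essential curves only, the resulting sutured decomposition $(M,\gamma)\rightsquigarrow(M',\gamma')$ is taut, so $S$ itself is a taut decomposing surface.

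Third, for the ``moreover'' clause, consider any component $S_0$ of $S$ with $S_0\cap R(\gamma)=\emptyset$: it is either closed in $M$ or has $\partial S_0\subset T(\gamma)$. Since $M$ is irreducible, no such component is a sphere bounding a ball; since $S$ is taut, no such component is null-homologous in a compressible way. One can simply delete all such components: the resulting surface $T$ still satisfies $T\cap R(\gamma)=\lambda$, the norm remains additive over disjoint pieces so $T$ is still norm-minimizing in its new class, and tautness of the decomposition is preserved because we have only removed pieces not separating $R_+(\gamma')$ from $R_-(\gamma')$. The main obstacle is the cut-and-paste of Step 2: arranging $S\cap R(\gamma)$ to be \emph{exactly} $\lambda$ while simultaneously preserving norm-minimality, essentialness of components, and the resulting tautness; this is the technical heart of Gabai's existence theorem \cite{G1,G2}.
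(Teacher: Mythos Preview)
Your outline is essentially the paper's approach: the paper's proof is a two-line citation of Gabai's Lemmas~0.7 and~0.6 from \cite{G2} (first produce a taut $S$ with $[S]=y$ and efficient boundary on $R(\gamma)$, then modify to a taut $T$ with $T\cap R(\gamma)=\lambda$ exactly), followed by discarding components disjoint from $R(\gamma)$---and what you have written is a sketch of the content of those two lemmas rather than an independent argument. One small inaccuracy: oriented double-curve sum of $S$ with copies of $R_\pm(\gamma)$ does \emph{not} preserve $[S]\in H_2(M,\partial M)$ in general, though this is harmless since the statement only asks for a taut $T$ with $T\cap R(\gamma)=\lambda$, not for $[T]=h$; also your Steps~1 and~2 overlap (you claim to have arranged $S\cap R(\gamma)=\lambda$ already in Step~1, then clean it up again in Step~2).
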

\begin{proof}
Let $y\in H_2(M,\partial M)$ satisfy that $\partial y=[\lambda]$. By 
\cite[Lemma~0.7]{G2}, there exists a taut surface $S\subset M$ such that $[S]=y$ and $(\partial S)\cap R(\gamma)$ is efficient in $R(\gamma)$. Clearly, $[(\partial S)\cap R(\gamma)]=[\lambda]\in H_1(R(\gamma),\partial R(\gamma))$. Using 
\cite[Lemma~0.6]{G2}, we can find a taut surface $T$ with $T\cap R(\gamma)=\lambda$. Throwing away the components of $T$ that do not intersect $R(\gamma)$, we still get a taut surface whose intersection with $R(\gamma)$ is $\lambda$. 
\end{proof}

\begin{lem}{\rm\cite[Lemma~0.4]{G2}}\label{lem:RevTaut}
Suppose $(M,\gamma)\stackrel{S}{\rightsquigarrow}(M',\gamma')$ is a sutured manifold decomposition. If $(M',\gamma')$ is taut, then $(M,\gamma)$ is also taut.
\end{lem}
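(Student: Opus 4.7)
Tautness of $(M,\gamma)$ has two ingredients: irreducibility of $M$, and Thurston norm minimality of $R(\gamma)$ in $H_2(M,\gamma)$. The plan is to derive each of them from the corresponding property of $(M',\gamma')$ by pulling information back across the decomposing surface $S$.

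First I would argue that $S$ is incompressible in $M$. Any compressing disk $D\subset M$ for $S$ survives in $M'=M\Bslash S$ as a compressing disk for one of the parallel copies $S'_\pm\subset R(\gamma')$, and compressing $R(\gamma')$ along it produces a surface in the same homology class with strictly smaller $\chi_-$ (modulo the degenerate torus case, which one handles separately by observing that a compressed torus gives a sphere component that then violates the no-null-homologous-subsurface aspect of the tautness definition applied to $R(\gamma')$). This contradicts the norm-minimality of $R(\gamma')$. With $S$ incompressible, irreducibility of $M$ follows by a standard innermost-disk argument: isotope any embedded sphere $\Sigma\subset M$ to minimize $|\Sigma\cap S|$; any innermost disk on $\Sigma$ is bounded by a curve on $S$ that, by incompressibility, also bounds a disk on $S$, and swapping and pushing off reduces the intersection. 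So $\Sigma\subset M'$, which is irreducible.

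For norm-minimality, let $T\subset M$ represent $[R_+(\gamma)]$ in $H_2(M,\gamma)$; the goal is $\chi_-(T)\geq\chi_-(R_+(\gamma))$, with the $R_-$-statement following symmetrically. Put $T$ transverse to $S$ and apply the double-curve-sum normalization of \cite[Lemmas~0.6--0.7]{G2} to eliminate intersection circles bounding disks on $T$ or $S$ and intersection arcs cobounding disks with $\partial S$. Cut $T$ along $T\cap S$ and reassemble with the resulting pieces of $S$ to obtain $T'\subset M'$ representing $[R_+(\gamma')]$ in $H_2(M',\gamma')$. Applying the same operation to $R_+(\gamma)$ produces $R_+(\gamma')$, and the $\chi_-$ contribution from the reassembled pieces of $S$ is identical in both cases, so
\[
\chi_-(T')-\chi_-(T)=\chi_-(R_+(\gamma'))-\chi_-(R_+(\gamma)).
\]
Tautness of $(M',\gamma')$ supplies $\chi_-(T')\geq\chi_-(R_+(\gamma'))$, and subtracting the common difference yields $\chi_-(T)\geq\chi_-(R_+(\gamma))$.

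The main obstacle is the bookkeeping in the cut-and-reassemble step: one must check that Gabai's normalization genuinely eliminates every intersection circle or arc whose absence or presence would shift the $\chi_-$ accounting by a sphere or disk term, and that the resulting $T'$ really represents $[R_+(\gamma')]$ rather than some class differing by boundary pieces in $S'_+\cup S'_-$. The torus and sphere components of $S$ also need a separate look, since there compressing does not strictly decrease $\chi_-$; one dispatches them by reducing to the case of connected $S$ and observing that any compressing disk yields a further sutured-manifold decomposition of $(M',\gamma')$ incompatible with its assumed tautness.
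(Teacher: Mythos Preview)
The paper does not supply its own proof of this lemma: it is stated with a citation to \cite[Lemma~0.4]{G2} and no argument is given. So there is nothing in the paper to compare your proposal against directly.

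That said, your outline is broadly the right shape and is close in spirit to Gabai's original argument. The incompressibility-then-irreducibility step is standard and essentially correct. The main worry is the displayed identity
\[
\chi_-(T')-\chi_-(T)=\chi_-(R_+(\gamma'))-\chi_-(R_+(\gamma)).
\]
As you yourself flag, this equality is not automatic: the passage from $\chi$ to $\chi_-$ discards disk and sphere components, and the cut-and-reassemble on an arbitrary $T$ can produce such pieces in a pattern different from what happens to $R_+(\gamma)$. Gabai's actual argument in \cite{G2} handles this by a more careful double-curve-sum normalization (precisely the content of his Lemmas~0.6--0.7) together with an additivity statement for $\chi_-$ under that operation, rather than by asserting a single clean difference formula. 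So your sketch identifies the right ingredients and the right obstacle, but the bookkeeping you defer is the substance of the proof; the reader who wants the details should be pointed to \cite{G2}, as the paper does.
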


\begin{defn}
A decomposing surface is called a {\it product disk}, if it is a
disk which intersects $s(\gamma)$ in exactly two points. A
decomposing surface is called a {\it product annulus}, if it is an
annulus with one boundary component in $R_+(\gamma)$, and the
other boundary component in $R_-(\gamma)$. The sutured manifold decomposition associated to a product disk or product annulus is called a {\it product decomposition}.
\end{defn}

By definition, a product annulus is always a decomposing surface, so none of its boundary components bounds a disk in $R(\gamma)$. This rules out some trivial cases.

\begin{lem}{\rm\cite[Lemmas~2.2,\:2.5]{GabaiFibred}}\label{lem:ProdIff}
Suppose $(M,\gamma)\stackrel{S}{\rightsquigarrow}(M',\gamma')$ is a product decomposition. Then $(M,\gamma)$ is taut if and only if $(M',\gamma')$ is taut.
\end{lem}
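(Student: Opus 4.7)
The plan is to handle the two directions separately. The ``if'' direction is immediate from Lemma~\ref{lem:RevTaut}, which does not require the decomposing surface to be of product type, so I focus on the converse: assume $(M,\gamma)$ is taut and $S$ is a product disk or product annulus, and prove that $(M',\gamma')$ is taut. Unwinding the definition, this amounts to checking two things, namely that $M'$ is irreducible, and that $R(\gamma')$ realizes the Thurston norm in $H_2(M',\gamma')$.

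For irreducibility, I first observe that the decomposing surface $S$ is essential in $M$. For a product annulus this is essentially built into the definition of a decomposing surface, since neither boundary component is allowed to bound a disk in $R(\gamma)$. For a product disk $D$, the boundary of $D$ alternates between arcs in $\gamma$ and arcs in $R(\gamma)$; any compressing disk for $D$ would, together with a subdisk of $D$, bound a $2$--sphere in the irreducible manifold $M$, leading to a contradiction. Standard $3$--manifold theory then shows that cutting $M$ along the connected incompressible surface $S$ leaves $M'$ irreducible.

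For the norm-minimizing step, the arithmetic one needs is that since $S$ is a disk or annulus, $\chi_-(S)=0$; combined with a direct inspection of the defining formulas for $R_\pm(\gamma')$, this yields $\chi_-(R_\pm(\gamma'))=\chi_-(R_\pm(\gamma))$, because cutting $R_\pm(\gamma)$ along an arc (for a product disk) or an essential simple closed curve (for a product annulus) and reattaching the appropriate copy of $S$ does not change $\chi_-$. I then argue by contradiction: suppose there is a properly embedded surface $T'\subset M'$ with $[T']=[R(\gamma')]$ in $H_2(M',\gamma')$ and $\chi_-(T')<\chi_-(R(\gamma'))$. After isotoping $T'$ so that $\partial T'$ agrees with $\partial R(\gamma')$ along the scar surfaces $S'_+\cup S'_-$, I reglue $M'$ to $M$ along $S'_\pm$; this turns $T'$ into a properly embedded surface $T\subset M$ with $[T]=[R(\gamma)]$ and $\chi_-(T)\le\chi_-(T')<\chi_-(R(\gamma))$, contradicting tautness of $(M,\gamma)$.

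The main technical obstacle is the regluing step: one must arrange, by an isotopy of $T'$ in $M'$ that preserves its homology class and does not increase $\chi_-$, that the intersections $T'\cap S'_\pm$ match up correctly so that the reglued surface is properly embedded in $M$ and represents $[R(\gamma)]\in H_2(M,\gamma)$. I expect to handle this by a standard cut-and-paste argument exploiting that $S'_\pm$ is a disk or annulus, so any extraneous arcs or circles of $T'\cap S'_\pm$ that bound disks or cobound annuli can be eliminated without increasing $\chi_-$, reducing to the case where $T'$ meets each $S'_\pm$ exactly as $R(\gamma')$ does.
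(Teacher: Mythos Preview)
The paper does not supply a proof here; the lemma is quoted directly from Gabai \cite{GabaiFibred}, so there is no argument in the paper to compare against. Your overall plan---Lemma~\ref{lem:RevTaut} for one direction, irreducibility plus a reglue-and-compare argument for the other---is the standard one and is essentially how Gabai proceeds, and for product annuli your sketch is correct modulo the cut-and-paste details you already flag.

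For product disks, however, your $\chi_-$ bookkeeping is off. Unwinding the defining formulas, $R_+(\gamma')$ is homeomorphic to $R_+(\gamma)$ cut along the arc $\alpha=S\cap R_+(\gamma)$: the copy of $S$ that gets ``reattached'' is glued along only \emph{one} of the two new boundary arcs, so it is merely a collar and does not undo the cut. Thus $\chi(R_+(\gamma'))=\chi(R_+(\gamma))+1$, and $\chi_-$ can strictly drop. For instance, if $(M,\gamma)$ is the product sutured manifold on a once-punctured torus $F$ and $S=\alpha\times I$ for a nonseparating arc $\alpha\subset F$, then $R_+(\gamma')$ is an annulus and $\chi_-(R_+(\gamma'))=0<1=\chi_-(R_+(\gamma))$. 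For the same reason your assertion $\chi_-(T)\le\chi_-(T')$ after regluing need not hold, since identifying two boundary arcs can raise $\chi_-$ by one. The argument is repairable---the shift in $\chi$ under cutting or regluing along a product disk is uniformly $\pm 1$, so what you actually need is to control the creation of disk or sphere components in $T$ and in $R_\pm(\gamma')$ (this is where irreducibility and tautness of $(M,\gamma)$ enter), not a bare identity of $\chi_-$---but as written the product-disk case has a gap.
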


In \cite[Section~4]{G1}, Gabai defined a complexity $C(M,\gamma)$ for a sutured $(M,\gamma)$. This complexity has value in a totally ordered set, and measures how far $(M,\gamma)$ is from being a product sutured manifold. Namely, $C(M,\gamma)$ obtains its infimum if and only if $(M,\gamma)$ is a product sutured manifold. 

\begin{notn}
Suppose that $N$ is a submanifold of a sutured manifold $(M,\gamma)$ such that $N\cap \partial M\subset \partial N$. Let $\partial_+N=R_+(\gamma)\cap N, \partial_-N=R_-(\gamma)\cap N$. We give $\partial_+N$ the same orientation as $\partial N$, and $\partial_-N$ the opposition orientation of $\partial N$. Let $\partial_vN=\partial N\backslash\mathrm{int}(R(\gamma)\cap N)$.
\end{notn}

\begin{defn}(Compare \cite[Definition~4.10]{G1})
Let $(M,\gamma)$ be a taut sutured manifold, and let $S$ be a maximal set of pairwise disjoint and nonparallel product annuli and product disks. Let $\mathscr R(M,\gamma)$ be the sutured manifold obtained from $(M,\gamma)$ by decomposing along $S$ and throwing away product sutured manifold components. $\mathscr R(M,\gamma)$ is called the {\it reduced sutured manifold} of $(M,\gamma)$. 
\end{defn}

The homeomorphism type of $\mathscr R(M,\gamma)$ as a sutured manifold is well-defined. This fact may be proved using the JSJ theory\footnote{We will not prove this fact here, since we do not need it in our paper.}. However, $\mathscr R(M,\gamma)$ may not be unique as a submanifold of $(M,\gamma)$ up to isotopy\footnote{For example, let $M=S^1\times B$, where $B$ is an orientable surface with exactly two boundary components $C_+,C_-$. Let $\gamma=\emptyset$, $R_{\pm}(\gamma)=S^1\times C_{\pm}$. Let $a_1,a_2,\dots,a_n\subset B$ be a maximal set of disjoint non-parallel arcs connecting $C_0$ to $C_1$. Then $S^1\times(B\Bslash \cup a_i)$ is a reduced sutured manifold. Since there may be different isotopy type of $B\Bslash \cup a_i$ in $B$, the isotopy type of $\mathscr R(M,\gamma)$ may be different.}. In practice, when we talk about $\mathscr R(M,\gamma)$ as a submanifold of $M$, we just choose one submanifold satisfying the definition.

\begin{defn}
The {\it reduced complexity} $\overline{C}(M,\gamma)$ of $(M,\gamma)$ is defined to be the complexity of the reduced sutured manifold $\mathscr R(M,\gamma)$.
\end{defn}

For simplicity, we omit the elaborate definition of $C(M,\gamma)$ in this paper. The only thing we need to know is the following lemma.

\begin{lem}\label{lem:Complexity}
Suppose $(M,\gamma)\stackrel{S}{\rightsquigarrow}(M',\gamma')$ is a taut sutured manifold decomposition such that some component of $S$ is not boundary parallel. Suppose that $(M,\gamma)$ does not contain any product annulus, and any product disk in $(M,\gamma)$ is boundary parallel,
then $$\overline{C}(M',\gamma')<C(M,\gamma)=\overline{C}(M,\gamma).$$
\end{lem}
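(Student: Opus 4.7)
The plan is to combine two ingredients already implicit in Gabai's \cite[Section~4]{G1} complexity theory: (a) a comparison between $\overline{C}$ and $C$, and (b) the fact that $C$ strictly decreases under taut decomposition along a non-boundary-parallel surface, while being unchanged under product decomposition.

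First I would establish the equality $C(M,\gamma)=\overline{C}(M,\gamma)$ under the stated hypotheses. By definition, $\overline{C}(M,\gamma)$ is the complexity of the reduced sutured manifold $\mathscr R(M,\gamma)$, obtained by decomposing along a maximal set of pairwise disjoint non-parallel product annuli and product disks and throwing away product components. By hypothesis, $(M,\gamma)$ contains no product annulus, and every product disk is boundary parallel. A boundary-parallel product disk cobounds a $3$-ball with part of $\partial M$, so decomposing along it merely splits off a product sutured $3$-ball and leaves a copy of $(M,\gamma)$ unchanged up to homeomorphism. Therefore $\mathscr R(M,\gamma)$ is homeomorphic to $(M,\gamma)$ as a sutured manifold, giving $\overline{C}(M,\gamma)=C(M,\gamma)$.

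Next I would use Gabai's monotonicity for $C$ under taut sutured manifold decompositions. By \cite[Section~4]{G1}, if $(M,\gamma)\stackrel{S}{\rightsquigarrow}(M',\gamma')$ is taut and some component of $S$ is not boundary parallel (so that the decomposition is non-trivial), then $C(M',\gamma')<C(M,\gamma)$. Combining this with the general inequality $\overline{C}(M',\gamma')\leq C(M',\gamma')$, which follows because the iterated process defining $\mathscr R$ proceeds by product decompositions (which preserve $C$, by Lemma~\ref{lem:ProdIff} together with the definition of complexity) followed by discarding product components (which have the smallest possible complexity), we obtain
\[
\overline{C}(M',\gamma')\;\leq\;C(M',\gamma')\;<\;C(M,\gamma)\;=\;\overline{C}(M,\gamma),
\]
as required.

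The main obstacle is the first step: making sure that the maximal-set-up-to-parallelism convention in the definition of $\mathscr R$ correctly yields $\mathscr R(M,\gamma)\cong(M,\gamma)$ under the given hypotheses, i.e., that the only product annuli/disks allowed are essentially inessential. Since the only product annuli are ruled out by assumption and the only product disks are boundary parallel (hence each cuts off a $3$-ball that is a product sutured manifold), the maximal set of non-parallel such surfaces contributes nothing after product components are discarded. The remaining steps are straightforward invocations of Gabai's complexity calculus once this reduction is in place; the hypothesis that some component of $S$ is not boundary parallel is precisely what Gabai requires for the strict drop in $C$.
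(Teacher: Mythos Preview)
Your first step, establishing $C(M,\gamma)=\overline C(M,\gamma)$ from the hypotheses on product annuli and disks, is fine. The gap is in the second step: the assertion that Gabai's complexity satisfies $C(M',\gamma')<C(M,\gamma)$ for an \emph{arbitrary} taut decomposition along a non-boundary-parallel $S$ is not what \cite[Section~4]{G1} proves. In the proof of \cite[Theorem~4.2]{G1}, Step~2 first replaces $S$ by a well-groomed surface $S_1$ (obtained by adjoining product disks), and it is only the decomposition along $S_1$, yielding a sutured manifold $(M'',\gamma'')$, for which Step~3 gives the strict drop $C(M'',\gamma'')<C(M,\gamma)$. There is no direct comparison between $C(M',\gamma')$ and $C(M,\gamma)$.

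This is exactly why the paper's proof routes through $(M'',\gamma'')$: one has the commutative diagram $(M,\gamma)\stackrel{S}{\rightsquigarrow}(M',\gamma')\stackrel{F}{\rightsquigarrow}(M'',\gamma'')$ with $F$ a union of product disks, and then
\[
\overline C(M',\gamma')=\overline C(M'',\gamma'')\le C(M'',\gamma'')<C(M,\gamma).
\]
The equality $\overline C(M',\gamma')=\overline C(M'',\gamma'')$ holds because product-disk decomposition does not change the reduced sutured manifold, hence preserves $\overline C$; this is the correct replacement for your claim that product decompositions preserve $C$. Note that Lemma~\ref{lem:ProdIff} only concerns tautness and says nothing about $C$, so your justification there does not stand on its own. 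Once you insert the intermediate $(M'',\gamma'')$ and swap the unproven invariance of $C$ for the evident invariance of $\overline C$ under product decomposition, your argument becomes the paper's.
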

\begin{proof}
By Step 2 in the proof of \cite[Theorem~4.2]{G1}, there exists a commutative diagram
$$
\xymatrix{
(M,\gamma)\ar@{~>}[r]^-{S}\ar@{~>}[rd]^-{S_1} &(M',\gamma')\ar@{~>}[d]^-{F}\\
 &(M'',\gamma''),
}
$$
where $F$ is a union of product disks, and $(M'',\gamma'')$ is the sutured manifold obtained from $(M',\gamma')$ by decomposing along $F$. Moreover, by Step 3 in \cite[Theorem~4.2]{G1}, there is an inequality 
$$C(M'',\gamma'')<C(M,\gamma).$$
Since $F$ consists of product disks, $\overline{C}(M',\gamma')=\overline{C}(M'',\gamma'')\le C(M'',\gamma'')$. Hence our conclusion holds.
\end{proof}


\section{Sketch of Gabai's construction}\label{Sect:Sketch}

In this section, we recall Gabai's sketch of the proof of Theorem~\ref{thm:GabaiHier} in \cite{GabaiHier}. For simplicity, we will only consider closed oriented irreducible $3$--manifolds with nonzero Thurston norm. This case is sufficient for our applications. For our purpose, we will strengthen the statement of Theorem~\ref{thm:GabaiHier} by introducing a few concepts in Heegaard Floer homology. Recall that the Heegaard Floer homology of a closed three-manifold splits as a direct sum with respect to 
Spin$^c$ structures:
$$HF^+(Y)\cong\bigoplus_{\mathfrak s\in\spinc(Y)}HF^+(Y,\mathfrak s).$$

\begin{defn}\label{defn:Basic}
A cohomology class $\alpha\in H^2(Y)$ is a {\it (Heegaard Floer) basic class} if $\alpha=c_1(\mathfrak s)$ for some $\mathfrak s\in\spinc(Y)$ with $HF^+(Y,\mathfrak s)\ne0$.
\end{defn}

The Adjunction Inequality \cite{OSzAnn2} says that if $\alpha$ is a basic class, then $$|\langle\alpha,[G]\rangle|\le\chi_-(G)$$ for any closed surface $G\subset Y$.

\begin{defn}
Suppose that $h\in H_2(Y)$. Let $$\mathcal B_Y(h)=\left\{\alpha=c_1(\mathfrak s)\left|\mathfrak s\in\spinc(Y),HF^+(Y,\mathfrak s)\ne0,\langle \alpha,h\rangle=-\chi_-(h)\right.\right\}$$
be the set of {\it bottommost basic classes} on $Y$ with respect to $h$.
\end{defn}

The following crucial observation 1) was made in \cite{KMSuture}.

\begin{lem}\label{lem:h1h2}
1) Suppose that $h_1,h_2\in H_2(Y)$ satisfy that $$\chi_-(h_1+h_2)=\chi_-(h_1)+\chi_-(h_2),$$ then
$$\mathcal B_Y(h_1+h_2)=\mathcal B_Y(h_1)\cap \mathcal B_Y(h_2).$$ 

2)  Suppose that $h_1,h_2\in H_2(Y)$ satisfy that $$\chi_-(h_1+h_2)<\chi_-(h_1)+\chi_-(h_2),$$ then
$$\mathcal B_Y(h_1)\cap \mathcal B_Y(h_2)\cap \mathcal B_Y(h_1+h_2)=\emptyset.$$ 

3) Given any two elements $h_1,h_2\in H_2(Y)$, we have
$$\mathcal B_Y(mh_1+h_2)\subset\mathcal B_Y(h_1)$$
when $m$ is sufficiently large.
\end{lem}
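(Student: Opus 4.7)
The key inputs are the adjunction inequality displayed just above the lemma and the seminorm property of $\chi_-$ on $H_2(Y;\mathbb{Z})$; in particular the triangle inequality yields the two-sided bound
\[
m\chi_-(h_1) - \chi_-(h_2) \le \chi_-(mh_1 + h_2) \le m\chi_-(h_1) + \chi_-(h_2)
\]
for every nonnegative integer $m$. With these in hand, the proofs are short and parallel.

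For parts (1) and (2) I would argue directly from adjunction. If $\alpha \in \mathcal B_Y(h_1+h_2)$, then adjunction gives the two lower bounds $\langle \alpha, h_i \rangle \ge -\chi_-(h_i)$; summing them and comparing with $\langle \alpha, h_1+h_2 \rangle = -\chi_-(h_1+h_2)$ immediately yields both parts, since the hypothesis in (1) makes the sum of the right-hand sides equal $-\chi_-(h_1+h_2)$ and forces both bounds to be sharp, while the strict inequality in (2) makes the simultaneous equalities impossible. The reverse inclusion in (1) is just linearity of the pairing.

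For part (3) I would work with the single integer $c_\alpha := \langle \alpha, h_1 \rangle + \chi_-(h_1)$, which is nonnegative by adjunction. Substituting $\langle \alpha, h_1 \rangle = c_\alpha - \chi_-(h_1)$ into the defining identity for $\alpha \in \mathcal B_Y(mh_1+h_2)$, and invoking the lower triangle bound on $\chi_-(mh_1+h_2)$ together with $\langle \alpha, h_2 \rangle \ge -\chi_-(h_2)$, a short computation gives
\[
m\, c_\alpha \le 2\chi_-(h_2).
\]
Because $c_\alpha$ is a nonnegative integer, any $m > 2\chi_-(h_2)$ forces $c_\alpha = 0$, which is precisely the conclusion $\alpha \in \mathcal B_Y(h_1)$.

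No real obstacle arises: all three parts reduce to combining the adjunction inequality with the triangle inequality for the Thurston norm. The one feature worth highlighting is the integrality of $c_\alpha$, which is what makes the bound $m\, c_\alpha \le 2\chi_-(h_2)$ collapse to $c_\alpha = 0$ without any separate appeal to the finiteness of the set of basic classes.
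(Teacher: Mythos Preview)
Your arguments for parts 1) and 2) are correct and essentially identical to the paper's.

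For part 3), your route is correct but genuinely different from the paper's. The paper invokes Thurston's result \cite[Theorem~2]{Th} that $\chi_-(mh_1+h_2)=m\chi_-(h_1)+C$ for a constant $C$ once $m$ is large, and then applies part 1) to the decomposition $(m h_1+h_2)=h_1+\big((m-1)h_1+h_2\big)$. Your argument bypasses this citation entirely: using only the triangle inequality $\chi_-(mh_1+h_2)\ge m\chi_-(h_1)-\chi_-(h_2)$ together with adjunction for $h_2$, you obtain the explicit bound $m\,c_\alpha\le 2\chi_-(h_2)$ and conclude via integrality of $c_\alpha$. This is more elementary and yields the explicit threshold $m>2\chi_-(h_2)$, whereas the paper's version gives no effective bound but packages the argument conceptually via part 1). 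Both are short; yours is self-contained.
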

\begin{proof}
1) 
Suppose that $\alpha\in\mathcal B_Y(h_1)\cap \mathcal B_Y(h_2)$, then 
\begin{equation}\label{eq:h_1+h_2}
\langle\alpha,h_1+h_2\rangle=\langle\alpha,h_1\rangle+\langle\alpha,h_2\rangle=-\chi_-(h_1)-\chi_-(h_2)=-\chi_-(h_1+h_2).
\end{equation}
So $\mathcal B_Y(h_1)\cap\mathcal B_Y(h_2)\subset\mathcal B_Y(h_1+h_2)$. 

For the other inclusion,
suppose that $\alpha\in\mathcal B_{Y}(h_1+h_2)$, then
$$
\langle \alpha,h_1+h_2\rangle=-\chi_-(h_1+h_2),
$$
which implies that
$$\langle \alpha,h_1\rangle+\langle \alpha,h_2\rangle=-\chi_-(h_1)-\chi_-(h_2).
$$
On the other hand, as $\alpha$ is a basic class, by the Adjunction Inequality we have
$$\langle \alpha,h_i\rangle\ge-\chi_-(h_i).$$
So the equality must hold. In particular, $\alpha\in\mathcal B_Y(h_1)\cap \mathcal B_Y(h_2)$.

2) Suppose that $\alpha\in\mathcal B_Y(h_1)\cap \mathcal B_Y(h_2)$. Now the last equality in Equation (\ref{eq:h_1+h_2}) becomes ``$<$'',
so $\alpha\notin\mathcal B_Y(h_1+h_2)$.

3) By Thurston \cite[Theorem~2]{Th}, there exists a constant $C=C(h_1,h_2)$, such that $\chi_-(mh_1+h_2)=m\chi_-(h_1)+C$ when $m$ is sufficiently large. Our conclusion then follows from 1).
\end{proof}

\begin{defn}
Suppose $(Y,G),(Y',G')$ are two manifold-surface pairs such that there exists a series of manifold-surface pairs
$$(Y,G)=(Y_0,G_0), (Y_1,G_1), \dots, (Y_n,G_n)=(Y',G'),$$
such that $Y_{i+1}$ is obtained from $Y_i$ by cutting open $Y_i$ along $G_i$ and regluing by a homeomorphism of $G_i$, each $G_i$ is connected and taut in $Y_i$,
and
\begin{equation}\label{eq:SpincSub}
\mathcal B_{Y_{i+1}}([G_{i+1}])\subset\mathcal B_{Y_{i+1}}([G_i]),
\end{equation}
then we say that $(Y',G')$ is a {\it successor} of $(Y,G)$, denoted $(Y',G')\prec(Y,G)$. Here we also regard $G_i$ as a surface in $Y_{i+1}$, by abuse of notation.
\end{defn}

\begin{thm}\label{thm:PreciseGabai}
Suppose $Y$ is a closed irreducible $3$--manifold with $b_1>0$, and $G\subset Y$ is a connected taut surface  with $g(G)>1$. Then there exists a closed connected surface $F$ such that
$$(F\times S^1,F)\prec(Y,G).$$
\end{thm}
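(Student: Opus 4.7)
The plan is to induct on the reduced complexity $\overline{C}(Y\Bslash G)$. Cutting $Y$ open along $G$ gives a taut sutured manifold $N=(M,\gamma)$ with empty suture set and $R_+(\gamma),R_-(\gamma)$ each a parallel copy of $G$. At each stage I build a new pair $(Y_{i+1},G_{i+1})$ from $(Y_i,G_i)$ by performing a cut-and-reglue along $G_i$ and simultaneously choosing a new connected taut surface $G_{i+1}\subset Y_{i+1}$ whose complement $N_{i+1}=Y_{i+1}\Bslash G_{i+1}$ is obtained from $N_i$ by further sutured manifold decompositions.

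For the base case, suppose $N$ has smallest possible $\overline{C}$, so that $\mathscr R(N)$ is a product; after peeling off product annuli, $N$ itself can be arranged to be $F\times I$ with $F\cong G$. Then $Y$ is the mapping torus of some $\phi\co F\to F$, and a single cut-and-reglue of $Y$ along $G$ by $\phi^{-1}$ turns $Y$ into $F\times S^1$, with $G$ becoming a fiber $F$. Condition~\eqref{eq:SpincSub} is automatic since $G$ is fixed setwise by the cut-and-reglue, so $[G]=[F]\in H_2(F\times S^1)$.

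For the inductive step, suppose $\overline{C}(N)>0$. Assume without loss of generality that $N$ itself contains no product annuli (they can be absorbed into the eventual $G_{i+1}$). Choose an essential simple closed curve $\lambda\subset G$ whose parallel copies $\lambda^\pm\subset R_\pm(\gamma)$ together bound a class in $H_1(\partial M,\gamma)$ coming from $H_2(M,\partial M)$, and apply Theorem~\ref{thm:ExistTaut} to get a taut surface $T\subset N$ with $T\cap R(\gamma)=\lambda^+\sqcup\lambda^-$ and at least one non-boundary-parallel component. After tubing components together as necessary to make $T$ connected, the sutured manifold $N\Bslash T$ has boundary consisting of two identical pieces, each a copy of $G_1:=(G\Bslash\lambda)\cup_\partial T$, a closed connected surface of genus $\geq g(G)>1$. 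Regluing the two copies of $G_1$ by the identity produces a closed $3$-manifold $Y_1$ with $Y_1\Bslash G_1\cong N\Bslash T$; then $G_1$ is taut in $Y_1$ by Lemma~\ref{lem:RevTaut}, and Lemma~\ref{lem:Complexity} gives $\overline{C}(Y_1\Bslash G_1)<\overline{C}(N)$.

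To verify condition~\eqref{eq:SpincSub}, the class $[G_1]\in H_2(Y_1)$ splits as $[G]+[T']$, where $T'\subset Y_1$ is the closed surface built from $T$ by the regluing. Tautness of $G$ and $T$ yields the Thurston norm additivity $\chi_-([G]+[T'])=\chi_-([G])+\chi_-([T'])$, and Lemma~\ref{lem:h1h2}(1) then delivers $\mathcal B_{Y_1}([G_1])\subset\mathcal B_{Y_1}([G])$. Applying the induction hypothesis to $(Y_1,G_1)$ finishes the proof. The main obstacle is producing the surface $T$ with the symmetric-boundary property and connected enough so that $G_1$ is closed and connected while strictly reducing the reduced complexity; this is exactly where Gabai's refinements of Theorem~\ref{thm:ExistTaut}, together with a careful treatment of product annuli in $N$ and the verification that the relevant homology classes on $R_\pm(\gamma)$ indeed lie in the image of $\partial\co H_2(M,\partial M)\to H_1(\partial M,\gamma)$ (which uses $b_1(Y)>0$), do the heavy lifting.
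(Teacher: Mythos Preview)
Your overall induction scheme on $\overline C(Y\Bslash G)$ and your base case are fine, and the idea of producing $G_1$ as a cut-and-paste of $G$ with a closed-up taut surface $T$ is exactly the right picture. The genuine gap is in the sentence ``Choose an essential simple closed curve $\lambda\subset G$ whose parallel copies $\lambda^\pm\subset R_\pm(\gamma)$ together bound a class \dots\ coming from $H_2(M,\partial M)$.'' There is no reason such a $\lambda$ exists. The kernel $\mathcal K=\ker\big(H_1(\partial M)\to H_1(M)\big)$ is a Lagrangian of rank $2g$ inside $H_1(G_+)\oplus H_1(G_-)$, and the ``diagonal'' $\{(c,c)\}$ is another Lagrangian; two Lagrangians of a symplectic space of rank $4g$ may well meet only in $0$. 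The hypothesis $b_1(Y)>0$ does not force a nontrivial intersection, and Theorem~\ref{thm:ExistTaut} only helps \emph{after} you have found such a class. This is precisely the content of Section~\ref{Sect:FindC}: Proposition~\ref{prop:HomoC} produces \emph{possibly different} primitive classes $c_\pm\in H_1(G_\pm)$ with $c_+=c_-$ in $H_1(M;\mathbb Q)$, and the regluing map $f\co G_+\to G_-$ is then chosen to carry $c_+$ to $c_-$. In general $f$ is not the identity, so your description ``reglue the two copies of $G_1$ by the identity'' cannot be arranged.

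Two smaller points compound this. First, ``tubing components together'' to make $T$ connected will generally destroy tautness of $T$ (and hence of $G_1$), so Lemma~\ref{lem:Complexity} and the Thurston-norm additivity you use for \eqref{eq:SpincSub} via Lemma~\ref{lem:h1h2} no longer apply; the paper instead controls connectedness through Remark~\ref{rem:OneCurve} and Lemma~\ref{lem:G'Connected}. Second, ``assume without loss of generality that $N$ contains no product annuli'' is not free: the presence and types (NN, NS, SN, SS) of product annuli govern whether a complexity-decreasing $S$ exists at all with the required boundary pattern, and the paper handles this with the case analysis in Lemmas~\ref{lem:OneNN}--\ref{lem:NoNN+NS} together with Constructions~\ref{constr:CPtorus} and~\ref{constr:Subsurface}. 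Your closing paragraph correctly identifies that this is where the work lies, but it is not a refinement of Theorem~\ref{thm:ExistTaut}; it is the separate algebraic argument of Proposition~\ref{prop:HomoC} plus the product-annulus case analysis of Section~\ref{Sect:DecCx}.
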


We sketch the proof here and give the technical details in the next two sections.

\begin{defn}
Suppose $G$ is a connected taut surface in $Y$. Let $(M,\gamma)$ be the sutured manifold obtained from $Y$ by cutting open $Y$ along $G$. (Here $\gamma=\emptyset$ since $Y$ is closed.) Then the {\it complexity} $C(Y,G)$ of the pair $(Y,G)$ is defined to be $\overline{C}(M,\gamma)$.
\end{defn}

Let $G$ be a connected taut surface in $Y$.

\noindent{\bf Step 1.} Let $M=Y\Bslash G$, $\partial M=G_+\cup G_-$. Find primitive homology classes $c_{\pm}\subset H_1(G_{\pm})$ such that $c_+=c_-$ in $H_1(M;\mathbb Q)$.

\noindent{\bf Step 2.} Find a homeomorphism $f\co G_+\to G_-$ with $f_*(c_+)=c_-$. Let $Y_1$ be the manifold obtained from $M$ by gluing $G_+$ to $G_-$ via $f$. Regard $G$ as a surface in $Y_1$. Find a closed surface $T\subset Y_1$ with $[T\cap G]=mc_+$ for some positive integer $m$.

\noindent{\bf Step 3.} Let $G_1\subset Y_1$ be a taut surface representing the homology class $[G]+[T]$. We can carefully make the choices in the previous two steps so that 
$$C(Y_1,G_1)<C(Y,G).$$ Moreover, we can prove $\mathcal B_{Y_{1}}([G_{1}])\subset\mathcal B_{Y_{1}}([G])$.

\noindent{\bf Step 4.} Repeat the above three steps to $(Y_1,G_1)$ to get a pair $(Y_2,G_2)$ with $C(Y_2,G_2)<C(Y_1,G_1)$ and $\mathcal B_{Y_{2}}([G_{2}])\subset\mathcal B_{Y_{2}}([G_1])$. Continue with this process until we get a $(Y_n,G_n)$ such that $Y_n$ fibers over the circle and $G_n$ is a fiber. Now it is easy to get $G_n\times S^1$ from $Y_n$.


\section{Finding homologous simple closed curves}\label{Sect:FindC}

The goal of this section is to prove Proposition~\ref{prop:HomoC}.

\begin{prop}\label{prop:HomoC}
Suppose that $M$ is a compact oriented $3$--manifold with boundary consisting of two homeomorphic connected surfaces $G_+,G_-$ with positive genus. Then there exist primitive homology classes $c_{\pm}\subset H_1(G_{\pm})$ such that $c_+=c_-$ in $H_1(M;\mathbb Q)$.
\end{prop}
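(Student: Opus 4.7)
The plan is to establish the conclusion by analyzing the inclusion-induced map on rational homology via the half-lives-half-dies principle applied to $M$, followed by a case analysis on the kernel structure.

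First, I would apply Poincar\'e--Lefschetz duality to $(M,\partial M)$ together with the long exact sequence of the pair to show that the kernel $K$ of $\iota\co H_1(\partial M;\mathbb{Q}) = H_1(G_+;\mathbb{Q}) \oplus H_1(G_-;\mathbb{Q}) \to H_1(M;\mathbb{Q})$ has dimension $\tfrac{1}{2}\dim H_1(\partial M;\mathbb{Q}) = 2g$. A further application of duality identifies $K$ as a Lagrangian subspace with respect to the symplectic intersection form on $H_1(\partial M;\mathbb{Q})$, which is the orthogonal sum of the symplectic forms on the two factors $H_1(G_\pm;\mathbb{Q})$. Setting $K_\pm := K \cap H_1(G_\pm;\mathbb{Q})$, each $K_\pm$ lies inside both the Lagrangian $K$ and an isotropic factor, so is itself isotropic in $H_1(G_\pm;\mathbb{Q})$ and has dimension at most $g$.

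Next I would split into two cases. If both $K_+$ and $K_-$ are nonzero, then $K_\pm^{\mathbb{Z}} := K_\pm \cap H_1(G_\pm;\mathbb{Z})$ is a nontrivial direct summand of the free abelian group $H_1(G_\pm;\mathbb{Z})$ (as the kernel, modulo torsion, of a homomorphism of finitely generated abelian groups into a free abelian group), so it contains classes that are primitive in the ambient $H_1(G_\pm;\mathbb{Z})$. Picking such primitive $c_\pm \in K_\pm^{\mathbb{Z}}$, both map to $0$ in $H_1(M;\mathbb{Q})$, and the required equality $c_+ = c_-$ holds trivially. Otherwise, without loss of generality $K_+ = 0$, so $\iota_+$ is injective and $V_+ := \iota_+(H_1(G_+;\mathbb{Q}))$ has dimension $2g$, matching $\dim(V_+ + V_-) = 2g$; hence $V_- \subseteq V_+$, and $V_+ \cap V_- = V_-$ has dimension $2g - \dim K_- \ge g > 0$.

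In this second case I would produce primitive representatives via a Poincar\'e-duality construction: pick a class $\xi \in H^1(M;\mathbb{Z})/\mathrm{tors}$ whose restrictions to $H^1(G_\pm;\mathbb{Z})$ are both primitive, and set $c_\pm := \mathrm{PD}_{G_\pm}(\xi|_{G_\pm})$. Poincar\'e duality on a closed orientable surface is an isomorphism of free abelian groups, so each $c_\pm$ is primitive in $H_1(G_\pm;\mathbb{Z})$. Naturality of Poincar\'e--Lefschetz duality, applied to the connecting map $H_2(M,\partial M) \to H_1(\partial M)$ and the restriction $H^1(M) \to H^1(\partial M)$, places $(c_+, c_-)$ in $K^{\mathbb{Z}}$, which gives $\iota_+(c_+) = \iota_-(-c_-)$ in $H_1(M;\mathbb{Q})$ as required (and $-c_-$ is also primitive).

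The hardest step will be establishing the existence of such a class $\xi$: the restriction map $H^1(M;\mathbb{Z})/\mathrm{tors} \to H^1(\partial M;\mathbb{Z})$ has image a rank-$2g$ sublattice, whose projections to each $H^1(G_\pm;\mathbb{Z})$ have rank at least $g$ (dually to the bounds on $\dim V_\pm$). For each factor the set of classes with primitive restriction is the complement of a countable union of proper sublattices (one for each prime divisor), and a density argument on the image lattice should yield a $\xi$ whose two boundary restrictions are simultaneously primitive.
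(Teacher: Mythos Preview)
Your overall strategy---set up the Lagrangian kernel $K$ and split into cases according to whether $K_\pm = K\cap H_1(G_\pm;\mathbb Q)$ vanish---is exactly the paper's. Case~1 is handled correctly and matches the paper's $\upsilon>0$ case. (Minor slip: the factors $H_1(G_\pm;\mathbb Q)$ are symplectic, not ``isotropic''; $K_\pm$ is isotropic simply because it lies inside the Lagrangian $K$.) You also miss one structural fact the paper proves and uses: $\dim K_+ = \dim K_-$ always, so in your Case~2 both vanish and both projections $\mathrm{Pr}_\pm\colon K\to H_1(G_\pm;\mathbb Q)$ are isomorphisms, not merely of rank $\ge g$.

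The genuine gap is your last paragraph. Unwinding the Poincar\'e duality, you are asking for $(c_+,c_-)$ in the integral kernel $\mathcal K\subset H_1(\partial M;\mathbb Z)$ with each $c_\pm$ primitive in $H_1(G_\pm;\mathbb Z)$, and you propose to find it by ``density.'' But a countable union of proper sublattices can easily exhaust a lattice, and, more to the point, for a given prime $p$ nothing you have written rules out $\mathrm{Pr}_-(\mathcal K)\subseteq p\,H_1(G_-;\mathbb Z)$, in which case the corresponding sublattice is not proper at all. Ruling this out is precisely where the content lies, and it uses the Lagrangian condition over $\mathbb Z/p\mathbb Z$ together with control of torsion. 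The paper first performs Dehn surgeries in the interior to arrange $H_1(M)$ torsion-free (harmless for a rational conclusion), then isolates a \emph{finite} set of bad primes $\{p_1,\dots,p_n\}$ coming from the torsion of $H_1(M,G_\pm)$. For each such $p$ it shows, using the mod-$p$ Lagrangian property, that the locus of $(b^+,b^-)\in\mathcal K$ with $p\mid b^+$ or $p\mid b^-$ maps to a union of two subspaces of dimension $\le g$ inside the $2g$-dimensional space $\mathcal K/p\mathcal K$, hence a proper subset. The Chinese Remainder Theorem then produces $b\in\mathcal K$ avoiding all bad primes simultaneously, and a separate divisibility lemma shows that once the bad primes are avoided, $b=k(c^+,c^-)$ with $c^\pm$ primitive in $H_1(G_\pm;\mathbb Z)$. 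This last reduction---from infinitely many primes to finitely many---is the heart of the argument and is not supplied by your sketch.
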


Let us recall the following well-known fact, whose proof can be found in \cite{Mey,Sch}.

\begin{lem}
A homology class on a closed, oriented, connected surface is represented by
a non-separating simple closed curve if and only if it is primitive.
\end{lem}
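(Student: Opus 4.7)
The plan is to prove the two directions separately. For the forward direction, suppose $C$ is a non-separating oriented simple closed curve on the closed oriented surface $\Sigma$. Because $\Sigma\setminus C$ is connected, I can pick an arc in $\Sigma\setminus C$ joining the two sides of a tubular neighborhood of $C$ and close it up by a short arc across $C$, producing a simple closed curve $C'$ meeting $C$ transversely in exactly one point. Then $[C]\cdot[C']=\pm 1$ under the algebraic intersection pairing on $H_1(\Sigma;\mathbb{Z})$. If $[C]=k\beta$ for some integer $k$ and $\beta\in H_1(\Sigma;\mathbb{Z})$, bilinearity gives $k(\beta\cdot[C'])=\pm 1$, forcing $k=\pm 1$, so $[C]$ is primitive.

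For the reverse direction, the plan is to exploit the standard surjection $\mathrm{Mod}(\Sigma)\twoheadrightarrow Sp(2g,\mathbb{Z})$ induced by the action of the mapping class group on $H_1(\Sigma;\mathbb{Z})$ equipped with the intersection form. Fix a standard symplectic basis $\{[a_1],[b_1],\dots,[a_g],[b_g]\}$ realized by disjoint simple closed curves $a_i,b_i$ on $\Sigma$, with $a_1$ itself a non-separating simple closed curve representing the primitive class $[a_1]$. Given any primitive $\alpha\in H_1(\Sigma;\mathbb{Z})$, if I can produce a symplectic automorphism $A\in Sp(2g,\mathbb{Z})$ sending $[a_1]$ to $\alpha$, then lifting $A$ to a homeomorphism $\phi$ of $\Sigma$ exhibits $\phi(a_1)$ as a non-separating simple closed curve representing $\alpha$.

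The main obstacle is therefore the purely algebraic statement that $Sp(2g,\mathbb{Z})$ acts transitively on primitive vectors in $\mathbb{Z}^{2g}$. I would prove this by induction on $g$. The base case $g=1$ is the classical fact that $SL(2,\mathbb{Z})$ acts transitively on primitive vectors in $\mathbb{Z}^2$: for a primitive pair $(p,q)$, Bézout's identity yields integers $r,s$ with $ps-qr=1$, and the matrix with columns $(p,q)^T,(r,s)^T$ lies in $SL(2,\mathbb{Z})$ and sends the first standard basis vector to $(p,q)$. For the inductive step, given a primitive $v\in\mathbb{Z}^{2g}$, I would apply a sequence of symplectic transvections (each in $Sp(2g,\mathbb{Z})$) to clear the last two coordinates of $v$ while preserving primitivity of the remaining $2(g-1)$ entries, reducing to the inductive hypothesis on the symplectic complement. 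A detailed execution of this algebraic argument, together with the surjectivity of $\mathrm{Mod}(\Sigma)\to Sp(2g,\mathbb{Z})$, is exactly what the references \cite{Mey,Sch} cited in the paper provide.
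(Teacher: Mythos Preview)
The paper does not actually supply a proof of this lemma; it merely records the statement as well known and cites \cite{Mey,Sch}. So there is no in-paper argument to compare against, and the relevant question is simply whether your sketch is sound.

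Your forward direction is correct and is the standard argument: a non-separating curve admits a dual curve meeting it once, so its class pairs to $\pm1$ with something and is therefore primitive.

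Your reverse direction is a valid strategy, and the reduction to transitivity of $Sp(2g,\mathbb Z)$ on primitive vectors via the surjection $\mathrm{Mod}(\Sigma)\to Sp(2g,\mathbb Z)$ is a perfectly good route. The one place that is genuinely only gestured at is the inductive step: ``clear the last two coordinates while preserving primitivity of the remaining $2(g-1)$ entries.'' This is not automatic (for example, $(2,0,3,0)$ is primitive but naively zeroing the last pair leaves $(2,0)$), so one must explain how the transvections are chosen to arrange coprimality in the truncated vector. A cleaner way to organize the same idea, and one you might prefer, is to bypass the induction entirely: since the intersection form is unimodular, a primitive $\alpha$ admits some $\beta$ with $\alpha\cdot\beta=1$; then $\{\alpha,\beta\}$ spans a hyperbolic summand whose symplectic complement has rank $2(g-1)$, and extending to a full symplectic basis produces the desired element of $Sp(2g,\mathbb Z)$ directly. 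Either way the conclusion is correct; the references \cite{Mey,Sch} in fact give somewhat more hands-on topological arguments rather than going through $Sp(2g,\mathbb Z)$, but all of these approaches are standard.
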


The next lemma is also a standard fact.

\begin{lem}\label{lem:LagSpace}
Suppose $M$ is a compact oriented $3$--manifold with boundary. Let $\mathbb F$ be a field. Regard $H_1(\partial M;\mathbb F)$ as a symplectic space over $\mathbb F$ with respect to the intersection form $\omega$. Let
$$\mathcal K_{\mathbb F}=\ker \big(i_*\co H_1(\partial M;\mathbb F)\to H_1(M;\mathbb F)\big),$$
then $\mathcal K_{\mathbb F}$ is a Lagrangian subspace of $H_1(\partial M;\mathbb F)$.
\end{lem}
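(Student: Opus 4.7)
The plan is to verify the two properties characterizing a Lagrangian subspace: (i) that $\mathcal K_{\mathbb F}$ is isotropic with respect to $\omega$, and (ii) that $\dim_{\mathbb F}\mathcal K_{\mathbb F}=\tfrac12\dim_{\mathbb F}H_1(\partial M;\mathbb F)$. The upper bound $\dim\mathcal K_{\mathbb F}\le\tfrac12\dim H_1(\partial M;\mathbb F)$ then follows automatically from (i) and the nondegeneracy of $\omega$, so the content is in (i) and the dimension identity.

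For the isotropy, I would use the long exact sequence of the pair $(M,\partial M)$ to write any $a,b\in\mathcal K_{\mathbb F}$ as $a=\partial\alpha$ and $b=\partial\beta$ for classes $\alpha,\beta\in H_2(M,\partial M;\mathbb F)$. The key tool is the standard commutative square provided by Poincar\'e--Lefschetz duality,
$$
\xymatrix{
H_2(M,\partial M;\mathbb F)\ar[r]^-{\partial}\ar[d]_-{\cong} & H_1(\partial M;\mathbb F)\ar[d]^-{\cong}\\
H^1(M;\mathbb F)\ar[r]^-{i^*} & H^1(\partial M;\mathbb F),
}
$$
which lets me rewrite $a,b$ as $i^*\widetilde\alpha,i^*\widetilde\beta$ after dualizing. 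Since the intersection form $\omega$ on the closed oriented surface $\partial M$ is the cup-product pairing evaluated on the fundamental class, I get
$$
\omega(a,b)=\langle i^*\widetilde\alpha\smile i^*\widetilde\beta,[\partial M]\rangle=\langle i^*(\widetilde\alpha\smile\widetilde\beta),[\partial M]\rangle=\langle\widetilde\alpha\smile\widetilde\beta,i_*[\partial M]\rangle,
$$
and the last expression vanishes because $\partial M$ bounds $M$, hence $i_*[\partial M]=0$ in $H_2(M;\mathbb F)$.

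For the dimension count I would use the same commutative diagram in reverse: it gives $\dim\mathcal K_{\mathbb F}=\dim\,\mathrm{image}(\partial)=\dim\,\mathrm{image}(i^*)$. Working over a field, $H^1(M;\mathbb F)$ and $H^1(\partial M;\mathbb F)$ are the $\mathbb F$-duals of $H_1(M;\mathbb F)$ and $H_1(\partial M;\mathbb F)$, and under this identification $i^*$ becomes the transpose of $i_*$; in particular $\mathrm{rank}(i^*)=\mathrm{rank}(i_*)$. Rank--nullity applied to $i_*$ then gives
$$
\dim H_1(\partial M;\mathbb F)=\dim\mathcal K_{\mathbb F}+\dim\,\mathrm{image}(i_*)=2\dim\mathcal K_{\mathbb F},
$$
which is exactly the Lagrangian dimension.

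The only real obstacle is being careful about the commutativity of the square linking the connecting homomorphism $\partial$ to the restriction $i^*$, but this is a standard naturality statement for Poincar\'e--Lefschetz duality in a compact oriented manifold with boundary, and signs can be fixed by choosing the induced orientation on $\partial M$ to be the one compatible with $\omega$.
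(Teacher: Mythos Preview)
Your proof is correct. The isotropy argument is essentially the paper's: both pass through the Poincar\'e--Lefschetz square to rewrite $\mathcal K_{\mathbb F}$ as the image of $i^*\co H^1(M;\mathbb F)\to H^1(\partial M;\mathbb F)$ and then use naturality of the cup product. Your endgame $\langle i^*(\widetilde\alpha\smile\widetilde\beta),[\partial M]\rangle=\langle\widetilde\alpha\smile\widetilde\beta,i_*[\partial M]\rangle=0$ is a slight rephrasing of the paper's observation that $i^*(\widetilde\alpha\smile\widetilde\beta)$ lies in the kernel of evaluation on $[\partial M]$, via the exact sequence $H^2(M)\to H^2(\partial M)\to H^3(M,\partial M)$; these are the same fact.

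For the dimension count, however, you take a genuinely different and shorter route. The paper splits the long exact sequence of $(M,\partial M)$ in two places, applies Poincar\'e duality to the relative terms, and equates the two alternating-sum identities to deduce $\dim\mathcal K_{\mathbb F}=\beta_1(\partial M)-\dim\mathcal K_{\mathbb F}$. Your argument bypasses this bookkeeping entirely: from the same square you read off $\dim\mathcal K_{\mathbb F}=\mathrm{rank}(i^*)$, then use that over a field $i^*$ is the transpose of $i_*$, so $\mathrm{rank}(i^*)=\mathrm{rank}(i_*)$, and rank--nullity for $i_*$ finishes. This is cleaner and avoids the need to assume $M$ connected or to track $|\partial M|$, $\beta_2(M)$, etc. The paper's approach, on the other hand, is more self-contained in that it never invokes the Universal Coefficient identification of $i^*$ with the dual of $i_*$.
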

\begin{proof}
We will use $\mathbb F$ coefficients and suppress $\mathbb F$ in the proof. Without loss of generality, we may assume $M$ is connected.

The fact that $\mathcal K$ is an isotropic subspace of $H_1(\partial M)$ can be proved geometrically when $\mathbb F=\mathbb Q\text{ or }\mathbb Z/2\mathbb Z$. We present a more algebro-topological proof here. 

Using Poincar\'e duality, the exact sequence
$$
\xymatrix{
H_2(M,\partial M)\ar[r] &H_1(\partial M)\ar[r] &H_1(M)
}
$$
can be identified with
$$
\xymatrix{
H^1(M)\ar[r]^-{i^*}&H^1(\partial M)\ar[r]&H^2(M,\partial M).
}
$$
So $\mathcal K$ can be identified with the image of $i^*$. Suppose $i^*(\alpha),i^*(\beta)$ represent two elements in $\mathcal K$, then the intersection number of them is given by
\begin{equation}\label{eq:Cup}
\langle i^*(\alpha)\smile i^*(\beta),[\partial M]\rangle=\langle i^*(\alpha\smile \beta),[\partial M]\rangle.
\end{equation}
Consider the exact sequence
$$
\xymatrix{
H^2(M)\ar[r]^{i^*}&H^2(\partial M)\ar[r]^-{\delta}&H^3(M,\partial M)\cong\mathbb F.
}
$$
The map $\delta$ is given by the evaluation against $[\partial M]$, so the right hand side of (\ref{eq:Cup}) is zero. This shows that $\mathcal K$ is an isotropic subspace of $H_1(\partial M)$.

It remains to show that $\dim\mathcal K=\frac12\beta_1(\partial M)$.
Consider the long exact sequences
$$
0\to H_3(M,\partial M)\to H_2(\partial M)\to H_2(M)\to H_2(M,\partial M)\to\mathcal K\to0$$
$$
0\to H_1(\partial M)/\mathcal K\to H_1(M)\to H_1(M,\partial M)\to H_0(\partial M)\to H_0(M)\to0.
$$
By Poincar\'e duality we have $H_i(M,\partial M)\cong H^{3-i}(M)$ and $H_0(\partial M)\cong H^2(\partial M)$. The alternating sums of the dimensions of the items in the above long exact sequences are zero, which implies that
$$1-|\partial M|+\beta_2(M)-\beta_1(M)+\dim \mathcal K=0$$ and
$$(\beta_1(\partial M)-\dim\mathcal K)-\beta_1(M)+\beta_2(M)-|\partial M|+1=0.$$
So $\dim \mathcal K=\beta_1(\partial M)-\dim\mathcal K$.
\end{proof}

Let $$\mathcal H^{\pm}_{\mathbb F}=H_1(G_{\pm};\mathbb F),\quad\mathcal V^{\pm}_{\mathbb F}=\ker \big(\iota_*\co H_1(G_{\pm};\mathbb F)\to H_1(M;\mathbb F)\big).$$
If $\mathbb F=\mathbb Z/p\mathbb Z$ for some prime $p$, then let $\mathcal H^{\pm}_p=\mathcal H^{\pm}_{\mathbb F}$, $\mathcal V^{\pm}_p=\mathcal V^{\pm}_{\mathbb F}$.

\begin{lem}\label{lem:V+=V-}
Let $$\mathrm{Pr}_{\pm}\co\mathcal K_{\mathbb F}\to \mathcal H^{\pm}_{\mathbb F}$$
be the projection maps. Then
$$\mathcal V^{\pm}_{\mathbb F}=(\mathrm{im}\:\mathrm{Pr}_{\pm})^{\bot},$$
where the orthogonal complement is taken in $\mathcal H^{\pm}_{\mathbb F}$, with respect to the corresponding intersection form $\omega_{\pm}$.
Moreover, $$\dim\mathcal V^{+}_{\mathbb F}=\dim\mathcal V^{-}_{\mathbb F}.$$
\end{lem}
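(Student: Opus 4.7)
The plan is to exploit the fact, established in Lemma~\ref{lem:LagSpace}, that $\mathcal{K}_{\mathbb{F}}$ is Lagrangian in $H_1(\partial M; \mathbb F)$ under the intersection form $\omega = \omega_+\oplus \omega_-$, once we decompose $H_1(\partial M;\mathbb{F}) = \mathcal{H}^+_{\mathbb F}\oplus\mathcal{H}^-_{\mathbb F}$. The key book-keeping is that the symplectic pairing between a class supported on $G_+$ and an arbitrary $\kappa\in\mathcal{K}_{\mathbb F}$ only sees the $G_+$-component of $\kappa$.

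First, I would observe that $\mathcal{V}^{\pm}_{\mathbb F}=\mathcal{K}_{\mathbb F}\cap\mathcal{H}^{\pm}_{\mathbb F}$ directly from the definitions, since $\iota_*$ is the restriction of $i_*$. Next, for $v\in\mathcal{H}^+_{\mathbb F}$ and any $\kappa\in\mathcal{K}_{\mathbb F}$ written as $\kappa=\mathrm{Pr}_+(\kappa)+\mathrm{Pr}_-(\kappa)$, the fact that $G_+$ and $G_-$ are disjoint gives $\omega(v,\kappa)=\omega_+(v,\mathrm{Pr}_+(\kappa))$. Because $\mathcal{K}_{\mathbb F}$ is Lagrangian, the condition $v\in\mathcal{K}_{\mathbb F}$ is equivalent to $v$ being $\omega$-orthogonal to all of $\mathcal{K}_{\mathbb F}$, and by the computation above this is equivalent to $\omega_+(v,\mathrm{im}\,\mathrm{Pr}_+)=0$. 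Combined with $\mathcal{V}^+_{\mathbb F}=\mathcal{K}_{\mathbb F}\cap\mathcal{H}^+_{\mathbb F}$, this yields $\mathcal{V}^+_{\mathbb F}=(\mathrm{im}\,\mathrm{Pr}_+)^{\perp}$, and symmetrically for $-$.

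For the dimension equality, I would note that $\ker\mathrm{Pr}_+$ consists exactly of those $\kappa\in\mathcal{K}_{\mathbb F}$ lying in $\mathcal{H}^-_{\mathbb F}$, so $\ker\mathrm{Pr}_+=\mathcal{V}^-_{\mathbb F}$, and similarly $\ker\mathrm{Pr}_-=\mathcal{V}^+_{\mathbb F}$. By rank–nullity and non-degeneracy of $\omega_{\pm}$ on $\mathcal{H}^{\pm}_{\mathbb F}$ we then obtain
\[
\dim\mathcal{V}^+_{\mathbb F}=\dim\mathcal{H}^+_{\mathbb F}-\dim\mathrm{im}\,\mathrm{Pr}_+=\dim\mathcal{H}^+_{\mathbb F}-\dim\mathcal{K}_{\mathbb F}+\dim\mathcal{V}^-_{\mathbb F},
\]
and the analogous identity with the signs swapped. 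Subtracting the two identities and using $\dim\mathcal{H}^+_{\mathbb F}=\dim\mathcal{H}^-_{\mathbb F}$ (since $G_+\cong G_-$) gives $2(\dim\mathcal{V}^+_{\mathbb F}-\dim\mathcal{V}^-_{\mathbb F})=0$, which finishes the proof.

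There is no serious obstacle: the only subtle point is the first direction of part (1), where one has to exhibit a class $v\in\mathcal{H}^+_{\mathbb F}$ with $v\perp\mathrm{im}\,\mathrm{Pr}_+$ as actually lying in $\mathcal{K}_{\mathbb F}$. This is where the Lagrangian property $\mathcal{K}_{\mathbb F}=\mathcal{K}_{\mathbb F}^{\perp}$ is essential, rather than just isotropy; everything else is linear algebra once the $\omega = \omega_+\oplus\omega_-$ decomposition is exploited.
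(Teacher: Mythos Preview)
Your proof is correct and follows essentially the same approach as the paper's: both use that $\mathcal{K}_{\mathbb F}$ is Lagrangian to identify $\mathcal{V}^{\pm}_{\mathbb F}$ with $(\mathrm{im}\,\mathrm{Pr}_{\pm})^{\perp}$, and both deduce the dimension equality from $\ker\mathrm{Pr}_+=\mathcal{V}^-_{\mathbb F}$ together with rank--nullity. The only cosmetic difference is that the paper immediately invokes $\dim\mathcal{K}_{\mathbb F}=\dim\mathcal{H}^+_{\mathbb F}=2g$ to conclude $\dim\mathcal{V}^+_{\mathbb F}=\dim(\mathrm{coker}\,\mathrm{Pr}_+)=\dim(\ker\mathrm{Pr}_+)=\dim\mathcal{V}^-_{\mathbb F}$ in one line, whereas you write the two rank--nullity identities and subtract; both arrive at the same place.
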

\begin{proof}
Suppose $a_+\in\mathcal V^+_{\mathbb F}$, $(b_+,b_-)\in \mathcal K_{\mathbb F}$, then $(a_+,0)\in\mathcal K_{\mathbb F}$, and
$$\omega_+(a_+,b_+)=\omega((a_+,0),(b_+,b_-))=0,$$
as $\mathcal K_{\mathbb F}$ is isotropic. So $a_+\in (\mathrm{im}\:\mathrm{Pr}_+)^{\bot}$, hence $$\mathcal V^{+}_{\mathbb F}\subset (\mathrm{im}\:\mathrm{Pr}_{+})^{\bot}.$$

On the other hand, if $a_+\in(\mathrm{im}\:\mathrm{Pr}_+)^{\bot}$, then for any $(b_+,b_-)\in \mathcal K_{\mathbb F}$ we have $\omega((a_+,0),(b_+,b_-))=0$, so $$(a_+,0)\in(\mathcal K_{\mathbb F})^{\bot}=\mathcal K_{\mathbb F}$$
since $\mathcal K_{\mathbb F}$ is Lagrangian. Thus $a_+\in \mathcal V^{+}_{\mathbb F}$. It follows that $$\mathcal V^{+}_{\mathbb F}\supset (\mathrm{im}\:\mathrm{Pr}_{+})^{\bot},$$ and hence the equality holds.

Similarly, $\mathcal V^{-}_{\mathbb F}= (\mathrm{im}\:\mathrm{Pr}_{-})^{\bot}$. 

Since 
$$\ker \mathrm{Pr}_+=0\oplus\mathcal V^-_{\mathbb F},$$
we have $$\dim\mathcal V^{+}_{\mathbb F}=\dim(\mathrm{coker}\:\mathrm{Pr}_+)=\dim(\ker \mathrm{Pr}_+)=\dim\mathcal V^{-}_{\mathbb F},$$
where the second equality uses the fact that $\dim\mathcal K_{\mathbb F}=\dim\mathcal H_{\mathbb F}^+=2g$.
\end{proof}

The proof of the following lemma is standard and left to the reader.

\begin{lem}\label{lem:SurgTorsFree}
Suppose $M$ is a compact oriented $3$--manifold. Then there exists a sequence of $3$--manifolds
$M=M_0,M_1,\dots,M_n$ for some $n\ge0$, such that
$M_{i+1}$ is obtained from $M_{i}$ by surgery on a rationally null-homologous knot $K_i$ in $M_{i}$, the maps in the following sequence $$H_1(M_{i};\mathbb Q)\to H_1(M_i, K_i;\mathbb Q)\cong H_1(M_{i+1},K_{i+1};\mathbb Q)\leftarrow H_1(M_{i+1};\mathbb Q)$$
are isomorphisms,
and $H_1(M_n;\mathbb Z)$ is torsion-free.
\end{lem}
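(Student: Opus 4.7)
The plan is to induct on $|\operatorname{Tor}H_1(M;\mathbb{Z})|$, the order of the torsion subgroup. Each step is local to one connected component, so I may assume $M$ is connected. If $H_1(M;\mathbb{Z})$ is already torsion-free, take $n=0$ and the trivial sequence; otherwise I will produce one surgery step yielding $M_1$ from $M_0=M$ which satisfies the stated conditions and strictly reduces $|\operatorname{Tor}H_1|$, and then apply induction to $M_1$.

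To set up the surgery, I pick a nontrivial torsion class $\alpha\in H_1(M;\mathbb{Z})$ and represent it by an embedded oriented simple closed curve $K_0\subset\mathrm{int}(M)$, using the standard fact that every $H_1$-class in a compact oriented $3$-manifold is so representable. Since $\alpha$ is torsion, $K_0$ is rationally null-homologous, which by the long exact sequence of the pair $(M,K_0)$ is exactly the condition that $H_1(M;\mathbb{Q})\to H_1(M,K_0;\mathbb{Q})$ is an isomorphism.

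Next, I determine which Dehn fillings work. Writing $E=M\Bslash K_0$ with meridian--longitude basis $\mu,\lambda$ on $\partial\nu(K_0)$, Mayer--Vietoris gives $H_1(M;\mathbb{Z})\cong H_1(E;\mathbb{Z})/\langle[\mu]\rangle$, and filling $E$ along a slope $\sigma=a\mu+b\lambda$ with $\gcd(a,b)=1$ produces $M_1$ with $H_1(M_1;\mathbb{Z})\cong H_1(E;\mathbb{Z})/\langle a[\mu]+b[\lambda]\rangle$; the new core $K_1$ is represented in $\partial E$ by a dual longitude $c\mu+d\lambda$ with $ad-bc=\pm 1$. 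Because $K_0$ is rationally null-homologous, $[\lambda]=s[\mu]$ in $H_1(E;\mathbb{Q})$ for some $s\in\mathbb{Q}$. Whenever $a+bs\neq 0$, a direct calculation gives $H_1(M_1;\mathbb{Q})\cong H_1(M;\mathbb{Q})$ and shows $K_1$ is rationally null-homologous in $M_1$, yielding the rightmost isomorphism of the lemma; the middle identification $H_1(M,K_0;\mathbb{Q})\cong H_1(M_1,K_1;\mathbb{Q})$ is by excision on the common exterior $E$.

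It remains to choose coprime $(a,b)$ with $a+bs\neq 0$ that strictly decreases torsion, and this is the main obstacle. Writing $[\mu]\equiv mv$ and $[\lambda]\equiv nv\pmod{\operatorname{Tor}H_1(E;\mathbb{Z})}$ for a suitable primitive $v$ gives $s=n/m$; since $\alpha$ is a nontrivial torsion class, $\gcd(m,n)<m$. B\'ezout then supplies coprime $(a,b)$ with $|am+bn|=\gcd(m,n)$, and a Smith normal form computation confirms $|\operatorname{Tor}H_1(M_1;\mathbb{Z})|<|\operatorname{Tor}H_1(M;\mathbb{Z})|$. The trickiest point is the interaction between pre-existing torsion of $H_1(E;\mathbb{Z})$ (not arising from $\partial\nu(K_0)$) and the new surgery relation, since a single surgery may not clear all torsion at once; fortunately the induction only requires strict decrease per step.
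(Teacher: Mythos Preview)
The paper does not prove this lemma; it merely says the proof ``is standard and left to the reader.'' So there is nothing to compare against, and the question reduces to whether your argument is correct.

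Your overall plan---induct on $|\operatorname{Tor}H_1|$ and reduce it by one well-chosen Dehn surgery on a torsion knot---is reasonable, and most of your homological bookkeeping is fine. In particular, your computation that $|\operatorname{Tor}H_1(M)| = m\cdot|\operatorname{Tor}H_1(E)|$ and $|\operatorname{Tor}H_1(M_1)| = |am+bn|\cdot|\operatorname{Tor}H_1(E)|$ (which is what your ``Smith normal form computation'' amounts to) is correct, so the induction step works \emph{provided} $\gcd(m,n)<m$.

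The gap is precisely your justification of that inequality. You assert ``since $\alpha$ is a nontrivial torsion class, $\gcd(m,n)<m$,'' but this is the wrong direction: what is true is that $\gcd(m,n)<m$ implies $\alpha\neq 0$, because the order of $\alpha$ in $H_1(M)$ is always a \emph{multiple} of $m/\gcd(m,n)$. The converse fails whenever the nontriviality of $\alpha$ comes from $\operatorname{Tor}H_1(E)$ rather than from the free part. Concretely, take $M=\mathbb{RP}^3\#\mathbb{RP}^3$ and let $K_0=K_1\# K_2$ be the connected sum of the two cores. Then $E$ is obtained by gluing two solid tori along a meridional annulus, $H_1(E)\cong\mathbb{Z}\oplus\mathbb{Z}/2$, and one finds $[\mu]=2w$, $[\lambda]=2w+u$ (in suitable coordinates with $u$ the torsion generator), so $m=n=2$ and $\gcd(m,n)=m$; yet $\alpha=(1,1)\neq 0$ in $\mathbb{Z}/2\oplus\mathbb{Z}/2$. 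For this $K_0$, \emph{no} rational surgery lowers $|\operatorname{Tor}H_1|$: every admissible slope gives $|am+bn|\cdot 2\ge 4$.

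The strategy is salvageable---in the same example, taking $K_0=K_1$ alone gives $m=2$, $n$ odd, and the surgery works---but you must argue that a \emph{good} choice of $\alpha$ and $K_0$ (one with $m\nmid n$) always exists. That is the missing idea, and it is not automatic from ``pick any nonzero torsion class and any representing knot.''
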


\begin{rem}
By Lemma~\ref{lem:SurgTorsFree}, we can change $M$ to a manifold $M'$ with $H_1(M')$ torsion-free by a series of Dehn surgeries, and $M'$ has the same rational homology as $M$. We can work with $M'$ instead of $M$. 
\end{rem}

In this section, from now on we assume that $H_1(M;\mathbb Z)$ is torsion-free.

Let 
$$\mathcal H=H_1(\partial M),\quad\mathcal K=\ker \big(i_*\co H_1(\partial M)\to H_1(M)\big),$$
$$\mathcal H^{\pm}=H_1(G_{\pm}),\quad\mathcal V^{\pm}=\ker \big(\iota_*\co H_1(G_{\pm})\to H_1(M)\big).$$
Since $H_1(M)$ is torsion-free, $\mathcal K$ is a direct summand of $\mathcal H$, and $\mathcal V^{\pm}$ is a direct summand of $\mathcal H^{\pm}$.

\begin{lem}\label{lem:ExclPrime}
There exists a finite set of primes $\{p_1,\dots,p_n\}$ with the following properties. 

1) For any prime $q$ not in this set, if an element $c\in H_1(G_{\pm})$ maps to $0$ in $H_1(M;\mathbb Z/q\mathbb Z)$, then $c\in\mathcal V^{\pm}+q\mathcal H^{\pm}$. 

2) If an element $c\in \mathrm{Pr}_{\pm}(\mathcal K)$ is primitive in $\mathrm{Pr}_{\pm}(\mathcal K)$ and not divisible by any $p_i$ as an element in $\mathcal H^{\pm}$, then $c$ is also primitive in $\mathcal H^{\pm}$.
\end{lem}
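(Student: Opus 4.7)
The plan is to pick the exceptional primes as those that arise from two Smith normal forms, one tailored to each clause. Because $\mathcal{V}^\pm$ is a direct summand of $\mathcal{H}^\pm$ and $H_1(M)$ is torsion-free, the induced map $\bar\phi_\pm\co \mathcal{H}^\pm/\mathcal{V}^\pm \hookrightarrow H_1(M)$ is an injection of finitely generated free abelian groups; likewise $\mathrm{Pr}_\pm(\mathcal{K})\hookrightarrow \mathcal{H}^\pm$ is an inclusion of free abelian groups. I would take $\{p_1,\dots,p_n\}$ to be the union, over both signs, of the primes dividing any elementary divisor of $\bar\phi_\pm$ or of the inclusion $\mathrm{Pr}_\pm(\mathcal{K})\hookrightarrow \mathcal{H}^\pm$. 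This set is finite because each Smith normal form contributes only finitely many elementary divisors.

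For (1), fix Smith-normal-form bases $\{e_i\}$ of $\mathcal{H}^\pm/\mathcal{V}^\pm$ and $\{f_i\}$ of $H_1(M)$ with $\bar\phi_\pm(e_i)=d_i f_i$. Suppose $q\notin\{p_1,\dots,p_n\}$, so $\gcd(q,d_i)=1$ for every $i$, and that $c\in\mathcal{H}^\pm$ vanishes in $H_1(M;\mathbb{Z}/q)$; using that $H_1(M;\mathbb{Z}/q)=H_1(M)\otimes\mathbb{Z}/q$, the image of $c$ lies in $qH_1(M)$. Expanding the class $\bar c\in\mathcal{H}^\pm/\mathcal{V}^\pm$ as $\bar c=\sum a_i e_i$, the hypothesis becomes $q\mid a_i d_i$ for all $i$; combined with $\gcd(q,d_i)=1$ this gives $q\mid a_i$, so $\bar c\in q(\mathcal{H}^\pm/\mathcal{V}^\pm)$ and hence $c\in \mathcal{V}^\pm + q\mathcal{H}^\pm$.

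For (2), fix Smith-normal-form bases $\{e'_i\}$ of $\mathrm{Pr}_\pm(\mathcal{K})$ and $\{f'_i\}$ of $\mathcal{H}^\pm$ with $e'_i=d'_i f'_i$. A primitive element $c=\sum a_i e'_i\in\mathrm{Pr}_\pm(\mathcal{K})$ satisfies $\gcd(a_i)=1$ and, viewed in $\mathcal{H}^\pm$, equals $\sum a_i d'_i f'_i$, so its divisibility in $\mathcal{H}^\pm$ is $\gcd(a_i d'_i)$. If a prime $p$ divides this gcd, then $p\mid a_i d'_i$ for every $i$, and coprimality of the $a_i$ forces $p\mid d'_i$ for some $i$, placing $p$ in our exceptional set. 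Consequently, if $c$ is not divisible by any $p_i$ in $\mathcal{H}^\pm$, then $c$ is primitive in $\mathcal{H}^\pm$. There is no serious obstacle here: once the exceptional primes are identified with the two sets of elementary divisors, both parts reduce to routine Smith-normal-form bookkeeping, with the one piece of care being to use torsion-freeness of $H_1(M)$ so that $\mathcal{H}^\pm/\mathcal{V}^\pm$ is free and the Smith normal form for (1) is available.
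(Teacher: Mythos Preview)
Your proof is correct and the underlying idea matches the paper's: the exceptional primes are exactly the torsion primes of the relevant cokernels, and away from these primes the inclusions behave as if they were saturated. The difference is purely in packaging. The paper names the primes topologically as those dividing the torsion of $H_1(M,G_{\pm})$, then uses the short exact sequence $0\to \mathcal H^{\pm}/\mathcal V^{\pm}\to H_1(M)\to H_1(M,G_{\pm})\to0$ and a $\mathrm{Tor}$ argument for part~(1), and for part~(2) proves the additional fact that $\mathrm{coker}\,\mathrm{Pr}_{\mp}$ embeds in $H_1(M,G_{\pm})$, so the \emph{same} set of primes handles both clauses. You instead take the union of the elementary-divisor primes from two separate Smith normal forms and argue by explicit coordinates. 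Your route is more elementary and self-contained; the paper's buys the observation that the two prime sets coincide (your part-(2) primes are automatically among your part-(1) primes), which is not needed for the lemma but is a pleasant structural point.
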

\begin{proof}
1) Let $p_1,\dots,p_n$ be all the possible prime divisors of the torsion parts of $H_1(M,G_{\pm})$. If a prime $q$ is not in this set, then the short exact sequence 
$$0\to \mathcal H^{\pm}/\mathcal V^{\pm}\to H_1(M)\to H_1(M,G_{\pm})\to0$$
remains short exact after tensoring with $\mathbb Z/q\mathbb Z$. Using the Universal Coefficient Theorem, we conclude that 
\begin{equation}\label{eq:H/V}(\mathcal H^{\pm}/\mathcal V^{\pm})\otimes\mathbb Z/q\mathbb Z=\mathcal H^{\pm}/(\mathcal V^{\pm}+q\mathcal H^{\pm})
\end{equation} 
is the kernel of the map $H_1(M;\mathbb Z/q\mathbb Z)\to H_1(M,G_{\pm};\mathbb Z/q\mathbb Z)$, which is the image of the map
$\iota_*\co H_1(G_{\pm};\mathbb Z/q\mathbb Z)\to H_1(M;\mathbb Z/q\mathbb Z)$. 

Consider the commutative diagram
$$
\xymatrix{
H_1(G_{\pm})\ar[rr]\ar[d]&&H_1(G_{\pm};\mathbb Z/q\mathbb Z)\ar[d]^{\iota_*}\\
\mathcal H^{\pm}/\mathcal V^{\pm}\ar[r]&(\mathcal H^{\pm}/\mathcal V^{\pm})\otimes\mathbb Z/q\mathbb Z\;\ar@{^{(}->}[r]&H_1(M;\mathbb Z/q\mathbb Z).
}
$$
If $c\in H_1(G_{\pm})$ represents an element in $\ker \iota_*$, then the image of $c$ in the group (\ref{eq:H/V}) is zero, hence our conclusion follows.

2) The  image of the map
$$H_2(M,\partial M)\to H_1(\partial M)$$
is $\mathcal K$.
Using the isomorphism $H_1(\partial M,G_{\pm})\cong H_1(G_{\mp})$, we can identify the image of the map
$$H_2(M,\partial M)\to H_1(\partial M,G_{\pm})$$
with $\mathrm{Pr}_{\mp}(\mathcal K)$.
 By the exact sequence
$$H_2(M,\partial M)\to H_1(\partial M,G_{\pm})\to H_1(M,G_{\pm}),$$
the cokernel of the map $$\mathrm{Pr}_{\mp}\co\mathcal K\to \mathcal H^{\mp}$$ is a subgroup of $H_1(M,G_{\pm})$. In particular, any prime divisor of the torsion part of  $\mathrm{coker}\:\mathrm{Pr}_{\pm}$ must be one of $p_1,\dots,p_n$. Consider the short exact sequence
$$0\to \mathrm{Pr}_{\mp}(\mathcal K)\to \mathcal H^{\mp}\to \mathrm{coker}\:\mathrm{Pr}_{\mp}\to 0.$$
If an element $c\in \mathrm{Pr}_{\mp}(\mathcal K)$  is primitive in $\mathrm{Pr}_{\mp}(\mathcal K)$ but not primitive in $\mathcal H^{\mp}$, and $c$, as an element in $\mathcal H^{\mp}$, is not divisible by any $p_i$, then $c$ must be divisible (in $\mathcal H^{\mp}$) by some $q\notin\{p_1,\dots,p_n\}$. Thus $c$ represents a nonzero element in the kernel of the map
$$i_q\co(\mathrm{Pr}_{\mp}(\mathcal K))\otimes\mathbb Z/q\mathbb Z\to \mathcal H^{\mp}\otimes \mathbb Z/q\mathbb Z.$$
Since $q$ is not a divisor of the torsion part of $\mathrm{coker}\:\mathrm{Pr}_{\mp}$, $$\mathrm{Tor}(\mathrm{coker}\:\mathrm{Pr}_{\mp},\mathbb Z/q\mathbb Z)=0,$$ so $i_q$ is injective, a contradiction.
\end{proof}

Let $\upsilon=\dim\mathcal V^+_{\mathbb Q}$. 

\begin{lem}\label{lem:CritPrim}
Suppose that $\upsilon=0$. Let $b=(b^+,b^-)\in \mathcal K$. If none of $b^{\pm}$ is divisible by any $p_i$ in Lemma~\ref{lem:ExclPrime}, then $b=k(c^+,c^-)$ for some integer $k$ and primitive elements $c^{\pm}\in \mathcal H^{\pm}$.
\end{lem}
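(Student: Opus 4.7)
The plan is to exploit the hypothesis $\upsilon=0$ to reduce the lemma to Lemma~\ref{lem:ExclPrime}(2). Since $\mathcal{H}^{\pm}$ is torsion-free, $\mathcal{V}^{\pm}$ is free, and $\dim\mathcal{V}^{+}_{\mathbb{Q}}=0$ therefore forces $\mathcal{V}^{+}=0$; Lemma~\ref{lem:V+=V-} applied over $\mathbb{Q}$ gives $\mathcal{V}^{-}_{\mathbb{Q}}=0$, hence $\mathcal{V}^{-}=0$ for the same reason. First I would observe that an element of $\ker \mathrm{Pr}_{+}$ has the form $(0,b^{-})\in\mathcal{K}$ with $b^{-}\in\mathcal{V}^{-}=0$, so both projections $\mathrm{Pr}_{\pm}\co\mathcal{K}\to\mathcal{H}^{\pm}$ are injective over $\mathbb{Z}$.

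Next, since $\mathcal{K}\subset\mathcal{H}$ is a subgroup of a free abelian group and hence free, I would write $b=k\tilde{b}$ with $k\geq 0$ and $\tilde{b}$ primitive in $\mathcal{K}$, and set $c^{\pm}=\mathrm{Pr}_{\pm}(\tilde{b})$, so that $\tilde{b}=(c^{+},c^{-})$ and $b^{\pm}=kc^{\pm}$. A short argument using the injectivity of $\mathrm{Pr}_{\pm}$ shows that $c^{\pm}$ is primitive in $\mathrm{Pr}_{\pm}(\mathcal{K})$: any factorization $c^{\pm}=md$ with $d\in\mathrm{Pr}_{\pm}(\mathcal{K})$ and $|m|>1$ admits a lift $\tilde{d}\in\mathcal{K}$ with $\mathrm{Pr}_{\pm}(\tilde{b}-m\tilde{d})=0$, forcing $\tilde{b}=m\tilde{d}$ by injectivity and contradicting primitivity of $\tilde{b}$.

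To finish I would verify the remaining hypothesis of Lemma~\ref{lem:ExclPrime}(2): if some $p_{i}$ divides $c^{\pm}$ in $\mathcal{H}^{\pm}$, then $p_{i}\mid kc^{\pm}=b^{\pm}$, contradicting the assumption that $b^{\pm}$ is not divisible by any $p_{i}$. Hence Lemma~\ref{lem:ExclPrime}(2) applies and shows that each $c^{\pm}$ is primitive in $\mathcal{H}^{\pm}$, completing the proof. I do not expect a real obstacle here; the only point worth stressing is that the same integer $k$ appears simultaneously in $b^{+}=kc^{+}$ and $b^{-}=kc^{-}$, which is automatic because we produced $c^{+}$ and $c^{-}$ as the two coordinates of a single primitive lift $\tilde{b}\in\mathcal{K}$, made possible by the $\upsilon=0$ assumption.
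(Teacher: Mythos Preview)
Your argument is correct, but it follows a genuinely different route from the paper's. The paper factors each coordinate separately in $\mathcal H^{\pm}$, writing $b=(kc^+,lc^-)$ with $c^{\pm}$ primitive in $\mathcal H^{\pm}$ from the outset, and then must show $k=\pm l$. It does this by a $\gcd$ argument: setting $d=\gcd(k,l)$, $k=k'd$, $l=l'd$, one has $(k'c^+,l'c^-)\in\mathcal K$; if $|l'|>1$ then $k'c^+$ maps to zero in $H_1(M;\mathbb Z/l'\mathbb Z)$, so $c^+$ does too (since $\gcd(k',l')=1$), and Lemma~\ref{lem:ExclPrime}(1) then forces $c^+\in q\mathcal H^+$ for any prime $q\mid l'$, contradicting primitivity. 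Thus the paper invokes part~(1) of Lemma~\ref{lem:ExclPrime}.

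You instead factor once inside $\mathcal K$, writing $b=k\tilde b$ with $\tilde b$ primitive in $\mathcal K$, so the common integer $k$ comes for free; the work is then pushed to showing that the coordinates $c^{\pm}=\mathrm{Pr}_{\pm}(\tilde b)$ are primitive in $\mathcal H^{\pm}$, for which you set up injectivity of $\mathrm{Pr}_{\pm}$ (using $\mathcal V^{\pm}=0$) and invoke part~(2) of Lemma~\ref{lem:ExclPrime}. Your route is a bit cleaner in that it sidesteps the $\gcd$ manipulation, at the cost of the small preliminary observation that the projections are injective; the paper's route has the advantage that primitivity of $c^{\pm}$ in $\mathcal H^{\pm}$ is immediate by construction. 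Either way the proofs are short and equivalent in strength, and between them they account for both parts of Lemma~\ref{lem:ExclPrime}.
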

\begin{proof}
As $\upsilon=0$, $\mathcal V^{\pm}=0$.
Suppose $b=(kc^+,lc^-)$, where $k,l\in\mathbb Z$ and $c^{\pm}$ is primitive in $\mathcal H^{\pm}$. By the assumption $k,l$ are  not divisible by any $p_i$. Let $$d=\gcd(k,l), \quad k=k'd,l=l'd.$$ Then $b'=(k'c^+,l'c^-)$ is also contained in $\mathcal K$. If $|l'|>1$, $k'c^+=0\in H_1(M;\mathbb Z/l'\mathbb Z)$. Since $\gcd(k',l')=1$, $c^+=0\in H_1(M;\mathbb Z/l'\mathbb Z)$. It follows from Lemma~\ref{lem:ExclPrime} 1) that $c^+\in q\mathcal H^+$ for any prime $q|l'$, a contradiction to the assumption that $c^+$ is primitive in $\mathcal H^+$. This shows that $l'=\pm1$. Similarly, $k'=\pm1$. This finishes the proof.
\end{proof}

The next lemma was reminded to the author by Mirela \c{C}iperiani.

\begin{lem}\label{lem:Remainder}
Suppose $p_1,\dots,p_n$ are $n$ different primes, $r_i\in(\mathbb Z/p_i\mathbb Z)^m$. Let $P=p_1\cdots p_n$. Then there exists an element $x\in(\mathbb Z/P\mathbb Z)^m$ such that $$x\equiv r_i\pmod{p_i(\mathbb Z/P\mathbb Z)^m}.$$
\end{lem}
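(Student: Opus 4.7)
The plan is to reduce the statement to the classical Chinese Remainder Theorem applied coordinate-by-coordinate. First I would observe what the congruence actually says: since $p_i$ divides $P$, multiplication by $P/p_i$ identifies $(\mathbb Z/P\mathbb Z)^m/p_i(\mathbb Z/P\mathbb Z)^m$ with $(\mathbb Z/p_i\mathbb Z)^m$ in the obvious way, and under this identification the condition $x\equiv r_i\pmod{p_i(\mathbb Z/P\mathbb Z)^m}$ just means that the entrywise reduction mod $p_i$ of $x$ equals $r_i$.

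Next I would handle the scalar case $m=1$: because the primes $p_1,\dots,p_n$ are pairwise coprime, the usual CRT gives a ring isomorphism
$$\mathbb Z/P\mathbb Z\;\xrightarrow{\ \sim\ }\;\prod_{i=1}^n \mathbb Z/p_i\mathbb Z,$$
so for any prescribed residues there exists a unique $x\in\mathbb Z/P\mathbb Z$ with $x\equiv r_i\pmod{p_i}$ for all $i$. To deduce the vector case, I would write $r_i=(r_i^{(1)},\dots,r_i^{(m)})$ and, for each coordinate $j\in\{1,\dots,m\}$, apply the scalar result to produce $x^{(j)}\in\mathbb Z/P\mathbb Z$ with $x^{(j)}\equiv r_i^{(j)}\pmod{p_i}$ for every $i$. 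Setting $x=(x^{(1)},\dots,x^{(m)})$ and combining with the identification from the previous paragraph immediately gives $x\equiv r_i\pmod{p_i(\mathbb Z/P\mathbb Z)^m}$.

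There is no real obstacle in the argument; the lemma is essentially a vectorial reformulation of CRT, and the only bookkeeping point is unpacking the meaning of congruence modulo the submodule $p_i(\mathbb Z/P\mathbb Z)^m$. Equivalently, one can phrase the entire proof as the single observation that the natural reduction map
$$(\mathbb Z/P\mathbb Z)^m\;\longrightarrow\;\prod_{i=1}^n (\mathbb Z/p_i\mathbb Z)^m$$
is an isomorphism of abelian groups, obtained by taking $m$-fold direct sums of the scalar CRT isomorphism; surjectivity of this map is exactly the claim.
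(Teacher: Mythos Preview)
Your argument is correct and matches the paper's approach exactly: the paper's proof is the single sentence ``Apply the Chinese Remainder Theorem to each coordinate of $x$,'' which is precisely what you do. Your added remarks unpacking the quotient $(\mathbb Z/P\mathbb Z)^m/p_i(\mathbb Z/P\mathbb Z)^m$ and rephrasing the result as surjectivity of the product-of-reductions map are fine elaborations, though the identification is more naturally described simply as reduction mod $p_i$ rather than via multiplication by $P/p_i$.
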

\begin{proof}
Apply the Chinese Remainder Theorem to each coordinate of $x$.
\end{proof}

\begin{lem}\label{lem:A_pDim}
Let $p$ be a prime number. Let $\mathcal{A}^{\pm}_p=\mathrm{Pr}^{-1}_{\pm}( p\mathcal H^{\pm})$, and let
$A^{\pm}_p$ be  the image of $\mathcal{A}^+_p$ under the map $\mathcal K\to\mathcal K/p\mathcal K$. Then $A^{\pm}_p$ has dimension at most $g$.
\end{lem}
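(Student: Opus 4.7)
The plan is to identify $\mathcal K/p\mathcal K$ with the Lagrangian subspace $\mathcal K_{\mathbb F_p}\subset H_1(\partial M;\mathbb F_p)$ produced by Lemma~\ref{lem:LagSpace}, realize $A^{\pm}_p$ as the kernel of the induced projection $\bar{\mathrm{Pr}}_{\pm}\co\mathcal K/p\mathcal K\to\mathcal H^{\pm}/p\mathcal H^{\pm}$, and then bound this kernel via the symplectic isotropy of $\mathcal V^{\mp}_p$ supplied by Lemma~\ref{lem:V+=V-}.

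First I will verify that the reduction-mod-$p$ map $\mathcal K/p\mathcal K\to\mathcal K_{\mathbb F_p}$ is an isomorphism of $\mathbb F_p$-vector spaces. Injectivity amounts to the identity $\mathcal K\cap p\mathcal H=p\mathcal K$: if $py\in\mathcal K$, then $p\cdot i_*(y)=0$ in $H_1(M)$, and the section's standing torsion-free hypothesis on $H_1(M;\mathbb Z)$ forces $y\in\mathcal K$. Surjectivity follows from a dimension count, since $\mathcal K\cong\mathbb Z^{2g}$ and $\mathcal K_{\mathbb F_p}$ is $2g$-dimensional by Lemma~\ref{lem:LagSpace}. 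The characterization $A^{\pm}_p=\ker\bar{\mathrm{Pr}}_{\pm}$ is then immediate, since $\bar x\in A^{\pm}_p$ precisely when some (equivalently, any) integral lift $x\in\mathcal K$ satisfies $\mathrm{Pr}_{\pm}(x)\in p\mathcal H^{\pm}$.

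Under the identification $\mathcal K/p\mathcal K\cong\mathcal K_{\mathbb F_p}\subset\mathcal H^+_p\oplus\mathcal H^-_p$, this kernel equals $\{(b^+,b^-)\in\mathcal K_{\mathbb F_p}:b^{\pm}=0\}$, which is naturally identified with $\mathcal V^{\mp}_p$, so it suffices to show $\dim\mathcal V^{\mp}_p\le g$. By Lemma~\ref{lem:V+=V-}, $\mathcal V^{\mp}_p=(\mathrm{im}\:\mathrm{Pr}_{\mp})^{\bot}$; on the other hand, any $b\in\mathcal V^{\mp}_p$ is itself a projection of an element of $\mathcal K_{\mathbb F_p}$ (take the pair with the other coordinate zero), so $\mathcal V^{\mp}_p\subseteq\mathrm{im}\:\mathrm{Pr}_{\mp}$. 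Combining gives $\mathcal V^{\mp}_p\subseteq(\mathcal V^{\mp}_p)^{\bot}$, so $\mathcal V^{\mp}_p$ is isotropic in the $2g$-dimensional symplectic space $\mathcal H^{\mp}_p$, forcing $\dim\mathcal V^{\mp}_p\le g$. The only subtle step is the identification $\mathcal K/p\mathcal K\cong\mathcal K_{\mathbb F_p}$ in the second paragraph, which is exactly where torsion-freeness of $H_1(M;\mathbb Z)$ enters in an essential way; the remainder is a formal consequence of the symplectic framework already set up.
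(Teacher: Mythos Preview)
Your proof is correct and follows essentially the same route as the paper: both arguments reduce the bound on $\dim A^{\pm}_p$ to the inequality $\dim\mathcal V^{\mp}_p\le g$, obtained from the isotropy supplied by Lemma~\ref{lem:LagSpace}. The paper phrases this as a contradiction (lifting $g+1$ independent elements of $A^+_p$ to $\mathcal A^+_p$ and observing that their $b^-$--components give $g+1$ independent elements of $\mathcal V^-_p$), while you make the identification $A^{\pm}_p\cong\mathcal V^{\mp}_p$ explicit via $\mathcal K/p\mathcal K\cong\mathcal K_{\mathbb F_p}$; your route through Lemma~\ref{lem:V+=V-} to obtain isotropy of $\mathcal V^{\mp}_p$ is slightly less direct than simply noting that $\{0\}\oplus\mathcal V^-_p\subset\mathcal K_{\mathbb F_p}$ is isotropic, but it is certainly valid.
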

\begin{proof}
As $\mathcal K$ is a direct summand of $\mathcal H$, $\mathcal K/p\mathcal K$ is a subspace of $\mathcal H/p\mathcal H$. 
If the dimension of $A^{\pm}_p$ is greater than $g$, we can suppose that $(b^+_1,b^-_1),\dots,(b^+_{g+1},b^-_{g+1})\in \mathcal{A}^+_p$ are $(g+1)$ elements, and they are linearly independent (over $\mathbb Z/p\mathbb Z$) modulo $p\mathcal H$. As each $b^+_i$ is contained in $p\mathcal H^+$, each $b^-_i$ represents an element $[b^-_i]\in\mathcal V^-_p$, and $[b^-_1],\dots,[b^-_{g+1}]$ are linearly independent. This contradicts Lemma~\ref{lem:LagSpace}, which implies that the dimension of $\mathcal V^-_p$ is at most $g$.
\end{proof}

\begin{proof}[Proof of Proposition~\ref{prop:HomoC}]
If $\upsilon>0$, then both $\mathcal V^+$ and $\mathcal V^-$ have positive rank. Let $c^{\pm}$ be a primitive element in $\mathcal H^{\pm}$ such that $mc^{\pm}\in \mathcal V^{\pm}$ for some positive integer $m$. Then $c^+$ is homologous to $c^-$ in $H_1(M;\mathbb Q)$.

Now consider the case $\upsilon=0$. By Lemma~\ref{lem:SurgTorsFree}, we may assume $H_1(M)$ is torsion-free.
Let $\{p_1,\dots,p_n\}$ be the set of primes as in Lemma~\ref{lem:ExclPrime}, and let $A^{\pm}_{p_i}$ be as in Lemma~\ref{lem:A_pDim}.
By Lemma~\ref{lem:A_pDim}, $(\mathcal K/p\mathcal K)\backslash(A^+_p\cup A^-_p)$ is always nonempty, so we can apply Lemma~\ref{lem:Remainder} to get $b=(b^+,b^-)\in\mathcal K$ such that its residue modulo $p_i\mathcal K$ is not contained in $A^{+}_{p_i}\cup A^{-}_{p_i}$ for each $i$. Thus none of $b^+,b^-$ is divisible by any $p_i$. By Lemma~\ref{lem:CritPrim}, $b=k(c^+,c^-)$ for some integer $k$ and primitive elements $c^{\pm}\in \mathcal H^{\pm}$.
\end{proof}


\section{Decreasing the reduced complexity}\label{Sect:DecCx}

In this section, we will prove that the reduced complexity for the pair $(Y,G)$ can always be decreased if $Y$ does not fiber over $S^1$ with $G$ being a fiber.

\begin{notn}
Throughout this section, $Y$ is a closed, oriented, connected, irreducible $3$--manifold, $G\subset Y$ is a connected taut surface with $g(G)>1$, and $M=Y\Bslash G$. Let $G_+,G_-$ be  the two components of $\partial M$, oriented so that $\partial M=G_+\sqcup(-G_-)$. Then $M$ has a natural sutured manifold structure with $\gamma=\emptyset$, $R_{\pm}(\gamma)=G_{\pm}$.
\end{notn}

\begin{defn}
Let $A$ be a product annulus in $M$.

$\bullet$ $A$ is of {\it type NN}, if both components of $\partial A$ are non-separating in $R(\gamma)$; 

$\bullet$ $A$ is of {\it type NS}, if $\partial_- A$ is non-separating and $\partial_+A$ is separating; 

$\bullet$ $A$ is of {\it type SN}, if $\partial_- A$ is separating and $\partial_+A$ is non-separating; 

$\bullet$ $A$ is of {\it type SS}, if both components of $\partial A$ are separating.
\end{defn}



\begin{figure}
\begin{picture}(300,95)
\put(30,0){\scalebox{0.5}{\includegraphics*
{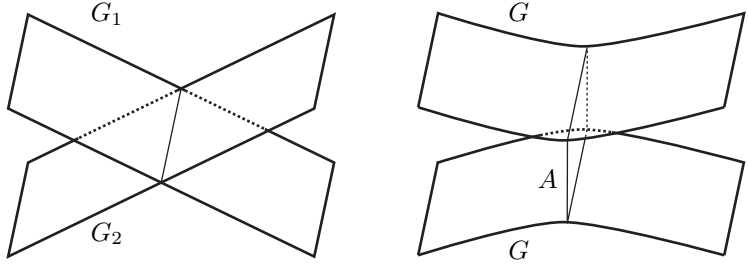}}}

\put(220,90){$G$}

\put(62,90){$G_1$}

\put(220,0){$G$}

\put(231,27){$A$}

\put(62,7){$G_2$}

\end{picture}
\caption{The cut-and-paste construction creates product annuli or disks}\label{fig:CutPaste}
\end{figure}

\begin{construction}\label{constr:CutPaste}
Recall that the cut-and-paste construction starts with two properly embedded oriented surfaces $G_1, G_2\subset Y$ with $G_1\pitchfork G_2$, and results in a properly embedded oriented surface $G$. Figure~\ref{fig:CutPaste} shows a local picture of the cut-and-paste, where all surfaces are oriented by upward normal vectors. Suppose $C$ is a component of $G_1\cap G_2$, then there exists an annulus $A=C\times I$, such that $\partial A=G\cap A$, and different ends of $A$ approach $G$ from different sides. Let $M=Y\Bslash G$, then $A$ becomes a product annulus in $M$.
\end{construction}

\begin{construction}\label{constr:Reglue}
Our basic construction is as follows. Suppose that $S\subset M$ is a taut surface such that there exists a homeomorphism  $f\co G_+\to G_-$ which sends $\partial_+S$ to $\partial_-S$. Let $(Y',\overline S)$ be the pair obtained from $(M,S)$ by gluing $G_+$ to $G_-$ via $f$. Let $G'\subset Y'$ be the surface obtained from $\overline S$ and $G$ by cut-and-pastes. Let $(M',\gamma')$ be the sutured manifold corresponding to the pair $(Y',G')$. As illustrated in Figure~\ref{fig:BasicS}, there is a commutative diagram of sutured manifold decompositions\footnote{This commutative diagram of topological manifold decompositions is obvious from Figure~\ref{fig:BasicS}. To prove it for sutured manifold decompositions, we need to figure out the change of the sutures, which is left to the reader.}:
\begin{equation}\label{eq:CommDiag}
\xymatrix{
Y'\ar@{~>}[r]^-{G'}\ar@{~>}[d]^{G}&(M',\gamma')\ar@{~>}[d]^{A}\\
(M,\gamma)\ar@{~>}[r]^-{S}&(M_1,\gamma_1),
}
\end{equation}
where $A$ is a collection of disjoint product annuli in $M'$ whose components correspond to the components of $\partial_+S$, as in Construction~\ref{constr:CutPaste}. As $M_1$ is taut, Lemma~\ref{lem:ProdIff} implies that $M'$ is also taut, hence $G'$ is taut in $Y'$.
\end{construction}

\begin{figure}
\begin{picture}(300,165)
\put(30,0){\scalebox{0.7}{\includegraphics*
{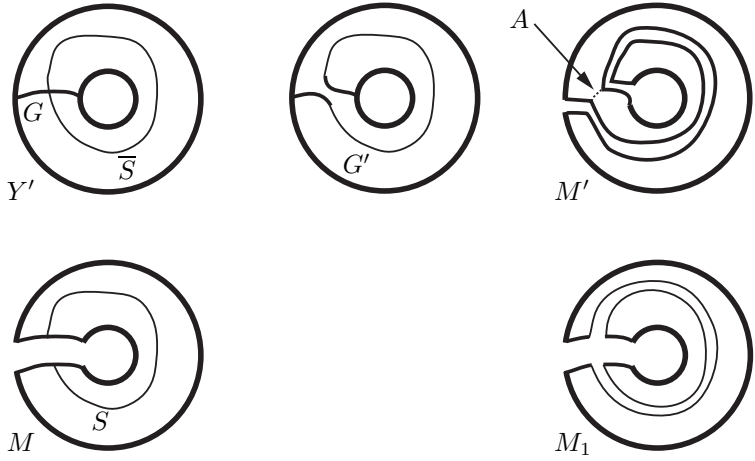}}}

\put(218,160){$A$}

\put(235,95){$M'$}

\put(28,95){$Y'$}

\put(28,0){$M$}

\put(235,0){$M_1$}

\put(34,125){$G$}

\put(155,105){$G'$}

\put(60,8){$S$}

\put(70,103){$\overline S$}

\end{picture}
\caption{Illustration of the commutative diagram (\ref{eq:CommDiag})}\label{fig:BasicS}
\end{figure}

\begin{rem}\label{rem:OneCurve}
The simplest case where there exists a homeomorphism  $f\co G_+\to G_-$ which sends $\partial_+S$ to $\partial_-S$ is that there exist a positive integer $k$ and non-separating simple closed curves $C_{\pm}\subset G_{\pm}$ such that $\partial_{\pm}S$ consists of $k$ parallel copies of $C_{\pm}$. Moreover, we can assume that $k$ is the smallest positive integer such that $k[C_+]=k[C_-]\in H_1(M)$, and $S$ has no closed components. Throughout this paper the above conditions will always be the case whenever we apply Construction~\ref{constr:Reglue}.
\end{rem}

\begin{lem}\label{lem:G'Connected}
Suppose that $S$ is a surface as in Remark~\ref{rem:OneCurve}. Let $G^{(m)}$ be the surface obtained from $\overline S$ and $m$ copies of $G$ by cut-and-pastes, then $G^{(m)}$ is connected.
\end{lem}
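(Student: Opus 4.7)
The plan is to reduce connectedness of $G^{(m)}$ to a combinatorial graph whose connectedness follows from the minimality of $k$, via the fact that any compact oriented surface in $M$ with boundary consisting of $a$ copies of $C_+$ and $b$ copies of $C_-$ forces $a[C_+]=b[C_-]$ in $H_1(M)$.

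First, I would isotope the $m$ copies of $G$ to disjoint parallel pushoffs $G^1,\dots,G^m$ lying on one side of $G$ in $Y'$. Then $\overline{S}$ meets them transversely in $km$ circles $c_i^j$ (with $i\in\{1,\dots,k\}$ indexing the ``column'' and $j\in\{1,\dots,m\}$ indexing the pushoff). Cutting $\overline{S}$ along all of them decomposes it into $(m-1)k$ annular strips $E_i^{(j,j+1)}$ (between consecutive $c_i^j,c_i^{j+1}$ in column $i$), together with $p$ ``core'' pieces $\tilde S^{(1)},\dots,\tilde S^{(p)}$ from the components of $S$ with their $\partial_-S$-collars trimmed (each still has nonempty boundary by the no-closed-components hypothesis of Remark~\ref{rem:OneCurve}). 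Cutting each $G^j$ along $c_1^j,\dots,c_k^j$ yields $k-1$ annuli $A_1^j,\dots,A_{k-1}^j$ and one ``big'' piece $B^j$, using that $C$ is non-separating in $G$.

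Next, I would build the incidence graph $\Gamma_m$ whose vertices are these pieces and whose edges record the two resolution pairings at each crossing $c_i^j$; then $\pi_0(G^{(m)})=\pi_0(\Gamma_m)$. Every $G^j$-piece has exactly two such edges, so contracting them turns each $G^j$-piece into a single edge between $\overline{S}$-pieces. Under cyclic indexing (viewing $B^j$ as $A_k^j$) these contracted edges take the uniform form $(i,l)\sim(i+1\bmod k,\,l+1)$ on the grid $(\mathbb Z/k)\times\{0,1,\dots,m\}$, whose level-$0$ entries are the $\tilde S^{(+,i)}$, level-$m$ entries are the $\tilde S^{(-,\sigma(i))}$ (for the permutation $\sigma$ on $\{1,\dots,k\}$ induced by $f$), and intermediate levels are the strips $E_i^{(l,l+1)}$. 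These edges decompose the grid into $k$ disjoint diagonal paths, each running from $\tilde S^{(+,i_0)}$ to $\tilde S^{(-,\sigma(i_0+m\bmod k))}$. Identifying $\tilde S$-pieces that lie in a common component of $S$ then reduces the question to connectedness of the graph $\mathcal G_m$ on the $2k$ ``boundary labels'' $\{(+,i),(-,j):i,j\in\mathbb Z/k\}$, whose edges are ``component edges'' joining labels in a common $S$-component together with $k$ ``cycle edges'' $(+,i)\sim(-,\sigma(i+m\bmod k))$.

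Finally, I would apply minimality. For any component $\mathcal C\subset\mathcal G_m$ let $I,J$ be the sets of $+$- and $-$-indices appearing in $\mathcal C$; the cycle edges give a bijection $I\leftrightarrow J$, so $|I|=|J|=:\ell$. Because $\mathcal C$ is a union of full $S$-component contributions and $\partial S^{(\alpha)}$ is null-homologous in $M$, each $S^{(\alpha)}\subset\mathcal C$ satisfies $a_\alpha[C_+]=b_\alpha[C_-]$ in $H_1(M)$; summing over $\alpha$ gives $\ell[C_+]=\ell[C_-]$, which by minimality of $k$ forces $\ell\ge k$ and hence $\ell=k$. Therefore $\mathcal C$ contains every vertex, so $G^{(m)}$ is connected. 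The hard part is the first two steps, where one must track the cut-and-paste resolution carefully enough to justify the uniform diagonal edge structure; once that is in place the remainder is essentially the same short homological argument that shows $\overline{S}$ itself is connected.
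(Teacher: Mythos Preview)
Your plan is correct and can be carried out, but it takes a considerably more elaborate route than the paper. The paper argues by induction on $m$. For the base case $m=1$ it does precisely your final homological step, but without any grid: if $G'=G^{(1)}$ were disconnected, take a component $G_1$ not containing the ``big'' piece $G\Bslash C$; then $G_1$ is built from some subset $A$ of the $k-1$ thin annuli of $G\Bslash(k C)$ together with a union $S_1$ of full components of $S$, and since each annulus contributes exactly one $\partial_+$--circle and one $\partial_-$--circle, one gets $|\partial_+S_1|=|\partial_-S_1|=|A|\le k-1$, contradicting the minimality of $k$. For the inductive step, one views $G^{(m+1)}$ as the cut-and-paste of the (assumed connected) $G^{(m)}$ with one more pushoff of $G$; since $G^{(m)}\cap G$ is again $k$ parallel copies of $C$ and the pieces of $G^{(m)}\Bslash G$ are again properly embedded surfaces in $M$ with boundary on $kC_+\cup kC_-$, the identical argument yields the same contradiction $\ell[C_+]=\ell[C_-]$ with $0<\ell<k$.

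Your approach instead unrolls the whole picture for general $m$ at once, tracking all $km$ crossings and reducing to the connectedness of a bipartite graph on $2k$ boundary labels. This has the virtue of making the shift $i\mapsto i+m\bmod k$ and the role of the gluing permutation $\sigma$ explicit, but it forces you to verify the uniform diagonal edge structure you flag as the ``hard part''. The paper's induction sidesteps that bookkeeping entirely: each step only ever sees the $m=1$ picture with its $k-1$ annuli, so no grid is needed. Both arguments rest on the same single fact---that a proper union of components of the cut surface would produce a relation $\ell[C_+]=\ell[C_-]$ with $0<\ell<k$---and neither uses anything beyond that.
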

\begin{proof}
We first prove that $G'$ is connected.
In the new manifold $Y'$, $\overline S\cap G$ consists of $k$ curves, each of which is parallel to $C\subset G$. Then $G\Bslash(\overline S\cap G)$ has $k$ components: $A_1,\dots,A_{k-1},G\Bslash C$, where each $A_i$ is an annulus. Assume that $G'$ is not connected, let $G_1$ be a component of $G'$ which does not contain $G\Bslash C$. Let $A=G_1\cap(A_1\cup\cdots\cup A_{k-1})\ne\emptyset$, $S_1=G_1\backslash\mathrm{int}(A)$. Then $S_1$ is a union of components of $S$, and $|\partial_-S_1|=|\partial_+S_1|=|A|\le k-1$, a contradiction to the assumption on $k$ in Remark~\ref{rem:OneCurve}. So $G'$ is connected.

If $G^{(m)}$ is connected, then the result in the last paragraph shows that $G^{(m+1)}$ is also connected. This finishes the proof.
\end{proof}

\begin{lem}\label{lem:ExSpinc}
In Construction~\ref{constr:Reglue}, if $S$ satisfies the condition in Remark~\ref{rem:OneCurve}, then $(Y',G')\prec(Y,G)$.
\end{lem}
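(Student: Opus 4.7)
The plan is to realize $(Y',G')$ as the immediate successor of $(Y,G)$ via the length-one sequence $(Y_0,G_0)=(Y,G)$, $(Y_1,G_1)=(Y',G')$. The cut-and-reglue from $Y$ to $Y'$ along $G$ is built into Construction~\ref{constr:Reglue}, so the definition of $\prec$ reduces to three verifications: (i) $G$ is connected and taut in $Y$, which is the standing hypothesis; (ii) $G'$ is connected and taut in $Y'$, which is Lemma~\ref{lem:G'Connected} together with the concluding assertion of Construction~\ref{constr:Reglue} (obtained from the commutative diagram~(\ref{eq:CommDiag}) via Lemma~\ref{lem:ProdIff}); and (iii) the containment $\mathcal{B}_{Y'}([G'])\subset\mathcal{B}_{Y'}([G])$.

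The substance of the proof is (iii). Cut-and-paste preserves both homology classes and Euler characteristics, so $[G']=[G]+[\overline{S}]$ in $H_2(Y')$ and $\chi(G')=\chi(G)+\chi(\overline{S})$. By Lemma~\ref{lem:h1h2}(1), it suffices to prove the Thurston-norm additivity
\[
\chi_-([G'])=\chi_-([G])+\chi_-([\overline{S}]),
\]
for then $\mathcal{B}_{Y'}([G'])=\mathcal{B}_{Y'}([G])\cap\mathcal{B}_{Y'}([\overline{S}])\subset\mathcal{B}_{Y'}([G])$.

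To prove this additivity, I first observe that $G$ is still taut in $Y'$, because cutting $Y'$ along $G$ returns the same sutured manifold $(M,\gamma)$. Combined with the tautness of $G'$ in $Y'$ and the absence of sphere components in $G$, $\overline{S}$, and $G'$ (established below), this gives $\chi_-([G])=-\chi(G)$ and $\chi_-([G'])=-\chi(G')$. Using also $\chi_-([\overline{S}])\le\chi_-(\overline{S})=-\chi(\overline{S})$ together with the triangle inequality, the chain
\[
\chi_-(G)+\chi_-(\overline{S}) = \chi_-(G') = \chi_-([G']) \le \chi_-([G])+\chi_-([\overline{S}]) \le \chi_-(G)+\chi_-(\overline{S})
\]
forces equality throughout, yielding the desired additivity.

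The main technical point is the no-sphere-components claim. For $G$ this is immediate from $g(G)>1$. For $\overline{S}$: since $Y$ is irreducible and $G$ is incompressible, $M$ is irreducible, so the taut surface $S\subset M$ has no sphere components; the decomposing-surface definition rules out disk components; so every component of $S$ has $\chi\le 0$. Since $\overline{S}$ is obtained from $S$ by identifying boundary circles via $f$, and gluing along circles preserves Euler characteristics, every component of $\overline{S}$ also has $\chi\le 0$, hence is not a sphere. Finally, $G'$ is connected with $\chi(G')=\chi(G)+\chi(\overline{S})\le\chi(G)<0$, so $G'$ is not a sphere either.
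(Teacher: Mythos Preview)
Your proof is correct and follows the same approach as the paper: use Lemma~\ref{lem:G'Connected} for connectedness, Construction~\ref{constr:Reglue} for tautness of $G'$, and Lemma~\ref{lem:h1h2}(1) applied to $[G]$ and $[\overline S]$ for the containment~(\ref{eq:SpincSub}). The paper leaves the Thurston-norm additivity hypothesis of Lemma~\ref{lem:h1h2}(1) implicit, whereas you have supplied the details via the chain of inequalities; note that in that chain the a priori tautness of $G$ in $Y'$ is not actually needed, since the last step only uses the automatic inequality $\chi_-([G])\le\chi_-(G)$.
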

\begin{proof}
In light of Lemma~\ref{lem:G'Connected}, we only need to check (\ref{eq:SpincSub}), which can be achieved by applying Lemma~\ref{lem:h1h2} 1) to $[\overline S],[G]\subset H_2(Y')$.
\end{proof}

\begin{figure}
\begin{picture}(300,135)
\put(30,0){\scalebox{0.6}{\includegraphics*
{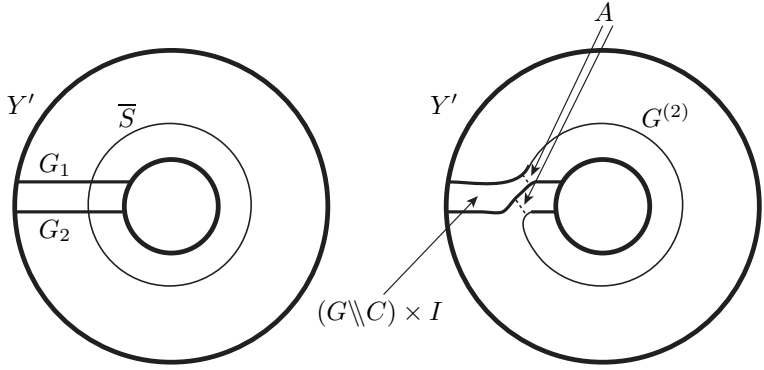}}}

\put(28,95){$Y'$}

\put(70,90){$\overline S$}

\put(40,73){$G_1$}

\put(40,48){$G_2$}

\put(268,90){$G^{(2)}$}

\put(188,95){$Y'$}

\put(145,17){$(G\Bslash C)\times I$}

\put(250,130){$A$}

\end{picture}
\caption{Decomposing $Y'$ along $G^{(2)}$ creates a sutured manifold with large product part}\label{fig:ParCopies}
\end{figure}

\begin{construction}\label{constr:CPtorus}
Suppose that  $S\subset M$ is a product annulus of type NN. We can apply Construction~\ref{constr:Reglue}. $\overline S$ is a torus with $\overline S\cap G=C$. Performing cut-and-pastes to $\overline S$ and two parallel copies of $G\subset Y'$, we get a new taut surface $G^{(2)}$. See Figure~\ref{fig:ParCopies} for an illustration of $G^{(2)}$. Abstractly, the surface $G^{(2)}$ is a two-fold cover of $G$ dual to $[C]\in H_1(G;\mathbb Z/2\mathbb Z)$. Let $M^{(2)}$ be the sutured manifold $Y'\Bslash G^{(2)}$, then we can find a union of two product annuli, denoted $A^{(2)}$, in $M^{(2)}$ as in Construction~\ref{constr:CutPaste}. We have the decomposition 
\begin{equation}\label{eq:CPDecomp}
M^{(2)}\stackrel{A^{(2)}}{\rightsquigarrow} \big((G\Bslash C)\times I\big)\sqcup \big(M\Bslash S\big).
\end{equation}
In other words, $M^{(2)}$ is obtained from $(G\Bslash C)\times I$ and $M\Bslash S$ by gluing the two copies of $C\times I$ to the two copies of $S$.
 It follows that $\mathscr R(Y'\Bslash G^{(2)})=\mathscr R(Y\Bslash G)$.
\end{construction}

\begin{construction}\label{constr:Subsurface}
Suppose that $P\subset G$ is an embedded essential subsurface, and that $\partial P$ consists of two unions of simple closed curves $C_0,C_1$, oriented so that $\partial P=-C_0\sqcup C_1$.
We can lift $P$ to a properly embedded surface $\widetilde P\subset G\times I$, such that $\partial\widetilde P=-(C_0\times0)\sqcup(C_1\times1)$. For example, let $$\widetilde P=(C_0\times[0,\frac12])\cup(P\times\{\frac12\})\cup(C_1\times[\frac12,1]).$$ Let $\widetilde P=\widetilde{\iota}(P)\subset G\times I$. Let $\widetilde G$ be the surface constructed by gluing two copies of $P$ (denoted $P_0,P_1$,) and $G\Bslash P$ together such that $G\Bslash P$ and $P_i$ are glued along $C_i$ for $i=0,1$. The boundary components of $\widetilde G$ correspond to the boundary components of $P$. The manifold obtained by decomposing $G\times I$ along $\widetilde P$ is the product sutured manifold $\widetilde G\times I$. This simple fact is illustrated in Figure~\ref{fig:Subsurface}. If $C_0,C_1\ne\emptyset$ and at least one of $C_0,C_1$ is non-separating, then $\widetilde G$ is connected.
\end{construction}

\begin{figure}
\begin{picture}(300,122)
\put(30,0){\scalebox{0.5}{\includegraphics*
{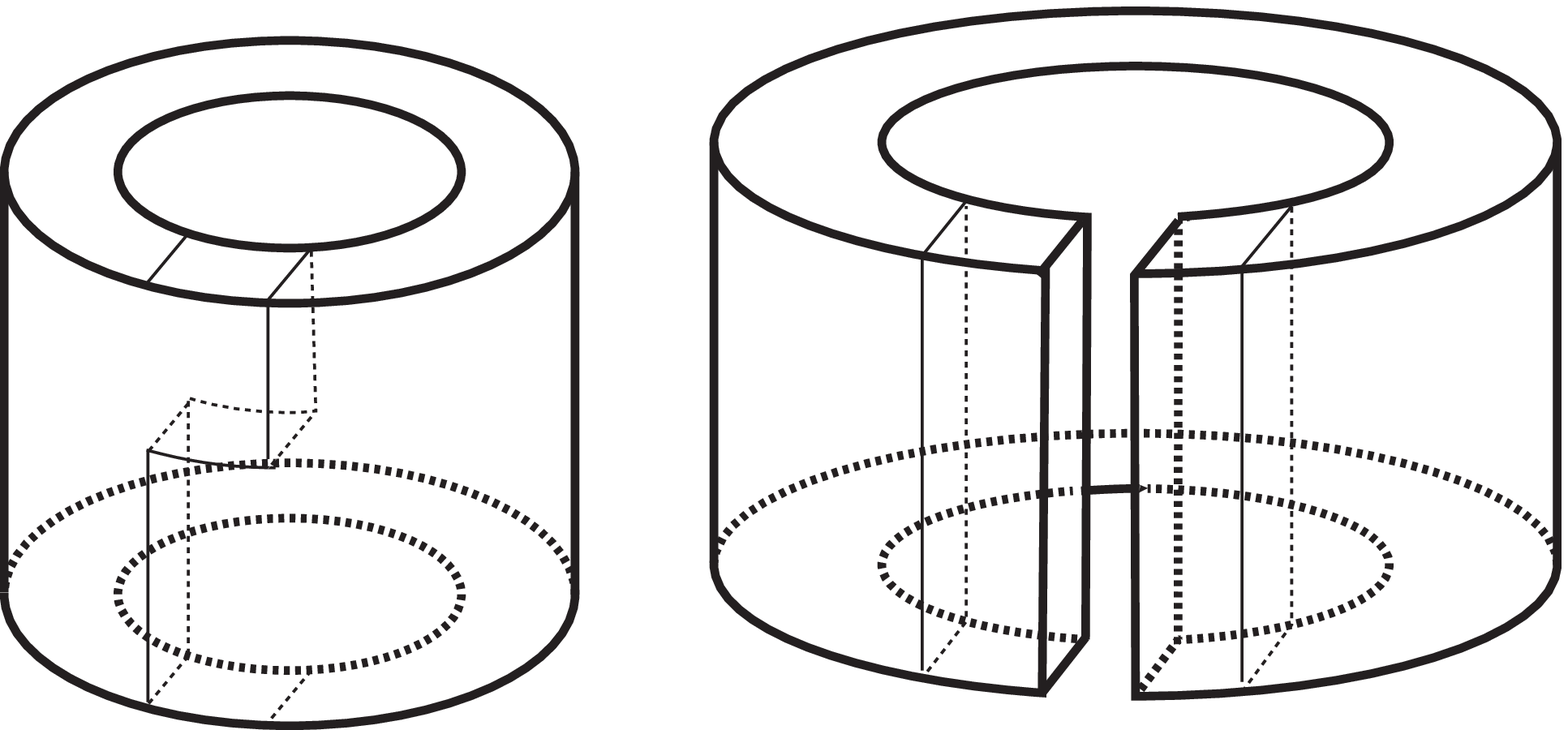}}}

\put(1,55){$G\times I$}

\put(315,55){$\widetilde G\times I$}

\put(52,60){$\widetilde P$}

\put(72,90){$C_1\times1$}

\put(36,-5){$C_0\times0$}

\put(202,-5){$\widetilde P_0$}

\put(243,97){$\widetilde P_1$}

\end{picture}
\caption{Illustration of the decomposition along $\widetilde P$. $\widetilde G\times \{0,1\}$ contains two copies of $\widetilde P$: $\widetilde P_0\subset \widetilde G\times0$ and $\widetilde P_1\subset \widetilde G\times1$.}\label{fig:Subsurface}
\end{figure}

\begin{lem}\label{lem:OneNN}
Suppose $\mathscr{R}(M,\gamma)\ne\emptyset$.
If $(M,\gamma)$ contains a product annulus of type NN, then there exists a pair $(Y',G')\prec(Y,G)$ satisfying that, for the sutured manifold $(M',\gamma')$ corresponding to $(Y',G')$, $M'\Bslash \mathscr{R}(M',\gamma')$ has exactly one component. 
\end{lem}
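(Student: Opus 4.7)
The plan is to apply Construction~\ref{constr:CPtorus} to the given type NN product annulus $S \subset M$, obtaining a candidate pair $(Y', G^{(2)})$ with corresponding sutured manifold $M^{(2)}$, and, if necessary, iterate the construction to achieve connectedness.

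First, I would verify that $(Y', G^{(2)}) \prec (Y, G)$ by decomposing the construction of $G^{(2)}$ into two cut-and-reglue steps. The first step is Construction~\ref{constr:Reglue} applied to $S$, producing $(Y', G')$ where $G'$ is the cut-and-paste of $\overline{S}$ with one copy of $G$; by Lemma~\ref{lem:ExSpinc}, $(Y', G') \prec (Y, G)$. The second step is the trivial cut-and-reglue of $Y'$ along $G'$ (by the identity), whose new surface is $G^{(2)}$, viewed as the cut-and-paste of $G'$ with another parallel copy of $G$ inside $Y'$. Since $G^{(2)}$ is connected by Lemma~\ref{lem:G'Connected} and taut by Construction~\ref{constr:CPtorus}, and $[G^{(2)}] = [G'] + [G]$ in $H_2(Y')$ with Thurston norm additivity (all three surfaces being taut and connected of genus $>1$, so $\chi_- = -\chi$ is additive under cut-and-paste), Lemma~\ref{lem:h1h2}(1) yields $\mathcal{B}_{Y'}([G^{(2)}]) \subset \mathcal{B}_{Y'}([G'])$. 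This completes the chain $(Y', G^{(2)}) \prec (Y', G') \prec (Y, G)$.

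Next, by Construction~\ref{constr:CPtorus}, $\mathscr{R}(M^{(2)}) = \mathscr{R}(M, \gamma)$, and $M^{(2)} \Bslash \mathscr{R}(M^{(2)})$ equals the connected product piece $(G \Bslash C) \times I$ glued along the two annuli $A^{(2)}$ to the product components of $M \Bslash S$. Any product component of $M \Bslash S$ adjacent to a copy of $S$ is absorbed into the same connected component as $(G \Bslash C) \times I$.

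The main obstacle, which I expect to be the heart of the proof, is handling product components of $M \Bslash S$ disjoint from both copies of $S$: a single application of Construction~\ref{constr:CPtorus} does not merge these with $(G \Bslash C) \times I$. My plan is to iterate: whenever $M^{(n)} \Bslash \mathscr{R}(M^{(n)})$ still has more than one component, I look for a type NN product annulus inside the piece $(G \Bslash C) \times I$ introduced at the previous stage (plentiful because $g(G \Bslash C) = g(G) - 1 \geq 1$, so $G \Bslash C$ carries many non-separating simple closed curves that yield type NN annuli in $M^{(n)}$) whose application of Construction~\ref{constr:CPtorus} bridges an isolated component with the main one. Termination follows by strict decrease in the component count, bounded above by the finite initial number of product components of $M$, yielding the desired $(Y', G') \prec (Y, G)$.
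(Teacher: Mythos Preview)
Your verification that $(Y', G^{(2)}) \prec (Y, G)$ is fine, and your observation that $\mathscr{R}(M^{(2)}) = \mathscr{R}(M,\gamma)$ is correct. The gap is in the iteration step.

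Applying Construction~\ref{constr:CPtorus} to an annulus $S'$ in $M^{(n)}$ produces a sutured manifold that decomposes as $(G^{(n)} \Bslash C') \times I$ glued to $M^{(n)} \Bslash S'$ along exactly the two copies of $S'$; see (\ref{eq:CPDecomp}). The new product piece $(G^{(n)} \Bslash C') \times I$ is therefore adjacent \emph{only} to the component of $M^{(n)} \Bslash \mathscr{R}$ that contained $S'$. If $S'$ lies in the main component, the isolated components $N_2, N_3, \dots$ sit unchanged inside $M^{(n)} \Bslash S'$ and remain isolated after the gluing; the same holds if $S'$ lies in some $N_j$. Hence each application of Construction~\ref{constr:CPtorus} \emph{preserves} the number of components of the product part, and your process never terminates when there is more than one to begin with. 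The claim that a well-chosen $S'$ inside $(G\Bslash C)\times I$ ``bridges an isolated component with the main one'' is precisely what fails.

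The paper's proof uses Construction~\ref{constr:CPtorus} only as a preliminary device to arrange $g(R_-(\delta_1))>0$. The actual merging is accomplished by a different surface: one takes an annulus $A_1 \subset N_1$ of type NN in $N_1$, an annulus $A_2$ from $\delta_2$ in a \emph{second} component $N_2$, and connects their boundaries on each of $G_\pm$ by a pair of pants $P_\pm$ whose remaining boundary component $C_\pm$ is a single non-separating curve. Lifting $P_\pm$ into collars $G\times[-1,0]$ and $G\times[1,2]$ via Construction~\ref{constr:Subsurface} and gluing to $A_1\cup A_2$ yields a connected surface $S$ satisfying Remark~\ref{rem:OneCurve}. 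After Construction~\ref{constr:Reglue}, the connectedness of the resulting $\widetilde{G}_\pm$ forces the pieces coming from $N_1$ and $N_2$ into the same component of $M_1 \Bslash \mathscr{R}(M_1)$, strictly decreasing the count. This two-annulus bridging is the missing idea in your proposal.
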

\begin{proof}
Suppose $(N_1,\delta_1)$ is a component of $M\Bslash \mathscr{R}(M,\gamma)$ that contains a product annulus $S$ of type NN in $M$. As $g(G)>1$, performing Construction~\ref{constr:CPtorus} to $S$ if necessary, we may assume $g(R_-(\delta_1))$ is positive\footnote{If we perform Construction~\ref{constr:CPtorus} to $S$, by  (\ref{eq:CPDecomp}) the new sutured manifold $M^{(2)}$ contains a product sutured manifold  $(N_1\Bslash S)\cup_{A^{(2)}}((G\Bslash C)\times I)$, whose $R_-$ has positive genus.}. Then we can find a product annulus $A_1\subset N_1$ which is of type NN in $N_1$. Suppose $(N_2,\delta_2)$ is another component of $M\Bslash \mathscr{R}(M,\gamma)$, let $A_2$ be a component of $\delta_2$. Let $A=A_1\cup A_2$, $N=M\Bslash A$. We can find a non-separating curve $C_{\pm}\subset G_{\pm}$ such that $\mp\partial_{\pm}A$ and $\pm C_{\pm}$ cobound a subsurface $P_{\pm}$ in $G_{\pm}$. In fact, $P_{\pm}$ can be chosen to be a pair of pants which is a neighborhood of $\partial_{\pm} A_1,\partial_{\pm} A_2$ and an arc connecting them. The condition that $C_{\pm}$ is non-separating is guaranteed by the fact that $\partial_{\pm}A_1$ is non-separating in $R_{\pm}(\delta_1)$ while $\partial_{\pm}A_2$ is disjoint from $R_{\pm}(\delta_1)$.

We apply Construction~\ref{constr:Subsurface} to $P_{\pm}$ to get taut decompositions $$G\times[-1,0]\stackrel{\widetilde{P}_-}{\rightsquigarrow}\widetilde G_-\times[-1,0],\quad G\times[1,2]\stackrel{\widetilde{P}_+}{\rightsquigarrow}\widetilde G_+\times[1,2].$$ 
Then $\widetilde G_{\pm}$ is connected. We can glue the pairs $(G\times[-1,0],\widetilde{P}_-),(G\times[1,2],\widetilde{P}_+)$ to $(M,A)$ such that $G\times0$ is glued to $G_-$ and $G\times1$ is glued to $G_+$, then we get a pair $(M^*,S)$, where $M^*$ is homeomorphic to $M$. See Figure~\ref{fig:TwoBOne} for a schematic picture. 

Let $f\co G\times2\to G\times(-1)$ be a homeomorphism which sends $C_+\times2$ to $C_-\times(-1)$. Let $(Y_1,\overline S)$ be the pair obtained from $(M^*,S)$ by gluing via $f$.
Let $G_1\subset Y_1$ be the surface obtained from $\overline S,G\times0,G\times1,G\times2$ by cut-and-pastes, $M_1=Y_1\Bslash G_1$. Then we have a commutative diagram of decompositions
$$\xymatrixcolsep{5pc}
\xymatrix{
Y_1\ar@{~>}[r]^-{G\times\{0,1,2\}}\ar@{~>}[d]^{G_1}&(G\times[-1,0])\sqcup M\sqcup (G\times[1,2])\ar@{~>}[d]^{\widetilde{P}_-\sqcup A\sqcup\widetilde{P}_+}\\
M_1\ar@{~>}[r]^-{(\overline S\cap (G\times\{0,1,2\}))\times I}&(\widetilde G_-\times[-1,0])\sqcup N\sqcup (\widetilde G_+\times[1,2]).
}
$$
So $M_1$ is obtained from $N$ by gluing $(\widetilde G_-\cup_{C_-=C_+} \widetilde G_+)\times I$ to $N$ along the two copies of $A$ in $\partial N$. Here $C_{\pm}$ is viewed as a component of $\partial\widetilde  G_{\pm}$. Thus $M_1$ is taut, and hence $G_1$ is taut. 

Clearly $\mathscr{R}(M_1)=\mathscr{R}(N)=\mathscr{R}(M)$. Any component of $M_1\Bslash \mathscr{R}(M_1)$ contains at least one component of $M\Bslash \mathscr{R}(M)$. Moreover, as $\widetilde G_{\pm}$ is connected, the two copies of $A_1$ and $A_2$ are in the same component of $M_1\Bslash \mathscr{R}(M_1)$, so  $$|M_1\Bslash \mathscr{R}(M_1)|<|M\Bslash \mathscr{R}(M)|.$$
Repeat the above procedure until we get a pair $(Y_n,G_n)$ such that the corresponding sutured manifold $(M_n,\gamma_n)$ has $|M_n\Bslash\mathscr{R}(M_n)|=1$. This $(Y_n,G_n)$ is the pair we want. 
\end{proof}

\begin{figure}
\begin{picture}(240,150)
\put(100,0){\scalebox{0.7}{\includegraphics*
{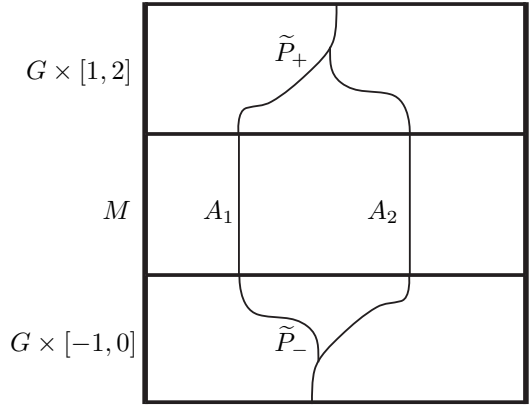}}}

\put(123,70){$A_1$}

\put(185,70){$A_2$}

\put(150,20){$\widetilde P_-$}

\put(56,123){$G\times[1,2]$}

\put(150,130){$\widetilde P_+$}

\put(85,70){$M$}

\put(50,20){$G\times[-1,0]$}

\end{picture}
\caption{Schematic picture of $A$ and $\widetilde P_{\pm}$}\label{fig:TwoBOne}
\end{figure}

\begin{lem}\label{lem:GluePDisk}
If $\mathscr{R}(M,\gamma)\ne\emptyset$, and $(M,\gamma)$ satisfies that $M\Bslash \mathscr{R}(M,\gamma)$ is connected and contains a product annulus of type NN, then there exists a pair $(Y',G')\prec(Y,G)$ such that $C(Y',G')<C(Y,G)$.
\end{lem}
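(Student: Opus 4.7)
The plan is to apply Construction~\ref{constr:Reglue} with a carefully chosen taut surface $S\subset M$, and to derive the complexity drop from Lemma~\ref{lem:Complexity} applied inside the reduced sutured manifold $R_0:=\mathscr R(M,\gamma)$. First, using the given product annulus of type NN and the connectedness of the product region $P:=M\Bslash R_0$, I would exhibit essential simple closed curves $D_\pm\subset R_\pm(R_0)$ that extend through $P$ to non-separating simple closed curves $C_\pm\subset G_\pm$ satisfying the form in Remark~\ref{rem:OneCurve}, and such that $k[D_+]=k[D_-]$ in $H_1(R_0;\mathbb Q)$ for some minimal $k>0$. The intuition is that the type NN annulus already provides non-separating curves $\partial_\pm A\subset G_\pm$ cobounding an annulus in $M$; tracing these curves onto $\partial R_0$ through the product structure of $P$ supplies $D_\pm$.

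Next, Theorem~\ref{thm:ExistTaut} applied inside the taut sutured manifold $R_0$ with $\lambda = kD_+\sqcup kD_-$ (oriented and isotoped to be efficient) produces a taut surface $T\subset R_0$ with $T\cap R(R_0)=\lambda$, every component meeting $R(R_0)$, and, after discarding any boundary-parallel components, no component of $T$ boundary parallel in $R_0$. Extending $T$ through $P$ by product annuli joining $kD_\pm$ to $kC_\pm$ gives a taut surface $S\subset M$ with $\partial_\pm S=kC_\pm$ as required by Remark~\ref{rem:OneCurve}. Choosing a homeomorphism $f\co G_+\to G_-$ sending $C_+$ to $C_-$ and running Construction~\ref{constr:Reglue} yields $(Y',G')\prec(Y,G)$ by Lemma~\ref{lem:ExSpinc}. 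From the diagram~(\ref{eq:CommDiag}), $(M',\gamma')$ decomposes via product annuli to $(M_1,\gamma_1)=(M,\gamma)\Bslash S$, so by Lemma~\ref{lem:ProdIff} and the fact that product decompositions do not alter the reduced sutured manifold, $\overline C(M',\gamma')=\overline C(M_1,\gamma_1)$. Since the product region $P$ is cut by $S$ only along product annuli, the non-product part of $M_1$ is $R_0\Bslash T$, and hence $\mathscr R(M_1,\gamma_1)=\mathscr R(R_0\Bslash T)$. Applying Lemma~\ref{lem:Complexity} to the taut decomposition $R_0\stackrel{T}{\rightsquigarrow}R_0\Bslash T$ inside the reduced sutured manifold $R_0$ then yields
$$
C(Y',G')=\overline C(M',\gamma')=\overline C(R_0\Bslash T)<C(R_0)=\overline C(M,\gamma)=C(Y,G),
$$
as desired.

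The main obstacle is the first step: transferring the information from the type NN product annulus $A\subset M$ into essential curves $D_\pm\subset R_\pm(R_0)$ with matching rational homology whose product extensions back through $P$ realize the prescribed non-separating curves in $G_\pm$. The boundary of $A$ lies in $G_\pm$ rather than in $R_\pm(R_0)$, and pushing it onto $\partial R_0$ through the product region may require routing across the annuli $\gamma(R_0)$; the hypothesis that $P$ is connected is precisely what ensures that $D_\pm$ end up in a homologically coherent position on the two sides of $\partial R_0$. A secondary technical point is guaranteeing that $T$ can be chosen without boundary-parallel components, which follows from $[D_\pm]$ being nontrivial in $H_1(R(R_0),\partial R(R_0))$ together with the ``moreover'' clause of Theorem~\ref{thm:ExistTaut}.
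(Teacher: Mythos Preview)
Your architecture---build $S$ from a taut surface in $R_0=\mathscr R(M,\gamma)$, run Construction~\ref{constr:Reglue}, and extract the complexity drop from Lemma~\ref{lem:Complexity} applied inside $R_0$---is correct and matches the paper. The gap is exactly where you flag it: there is no way in general to produce closed curves $D_\pm\subset R_\pm(R_0)$ with the required properties. The type~NN annulus $A$ lies in $M$, typically entirely in the product part $P$, so it gives non-separating curves in $G_\pm$ cobounding in $M$, not in $R_0$; nothing transfers this to a surface inside $R_0$. Worse, $R_\pm(R_0)$ may be planar, in which case every simple closed curve in it is homologous into $\partial R_\pm(R_0)$, the hypothesis $[\lambda]\ne0\in H_1(R(R_0),\partial R(R_0))$ of Theorem~\ref{thm:ExistTaut} fails, and any $T$ with closed-curve boundary on $R(R_0)$ will be boundary parallel---so Lemma~\ref{lem:Complexity} yields nothing. (Your ``secondary technical point'' already runs into this: the ``moreover'' clause of Theorem~\ref{thm:ExistTaut} only ensures components of $T$ meet $R(R_0)$, not that they are essential.)

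The paper's fix is to abandon closed curves on $R(R_0)$ entirely. One takes any taut $T_1\subset R_0$ with $[\partial T_1]\ne0\in H_1(\partial R_0)$ (this exists just from Lemma~\ref{lem:LagSpace}, independently of the NN hypothesis) and uses \cite[Lemma~4.5]{Ju2} to isotope so that every component of $\partial T_1'$ crosses the sutures; thus $\partial_\pm T_1'$ consists of \emph{arcs} $a_1^\pm,\dots,a_n^\pm$. The extension across the connected product part is then by \emph{product disks} $b_i^-\times I$, not annuli. The type~NN annulus is used for a completely different purpose: via Construction~\ref{constr:CPtorus} one first passes to a predecessor $(Y',G^{(2)})\prec(Y,G)$ with the same reduced part but with the genus of $R_-(\delta_2)$ made arbitrarily large, so that $n$ disjoint arcs $b_i^-$ with connected complement can be found in $R_-(\delta_2)$. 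A final step (as in Lemma~\ref{lem:OneNN}) attaches subsurfaces $P_\pm\subset G_\pm$ to convert $\partial_\pm S_1$ into a single non-separating curve on each side, meeting Remark~\ref{rem:OneCurve}.
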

\begin{proof}
Our argument here is similar to the argument used by Kronheimer and Mrowka \cite[Proposition~6.9]{KMSuture}.
Let $(N_1,\delta_1)=\mathscr{R}(M,\gamma)$, and let $(N_2,\delta_2)$ be the product sutured manifold $M\Bslash N_1$. Since $H_1(\partial N_1)\ne0$, we can always find a curve $C\subset \partial N_1$ such that $[C]\ne0\in H_1(\partial N_1)$ and $C$ is null-homologous in $N_1$. We can then isotope $C$ so that $C\cap R(\delta_1)$ is efficient in $R(\delta_1)$. By Theorem~\ref{thm:ExistTaut}, there exists a connected taut decomposing surface $T_1\subset N_1$, such that $T_1$ is connected and $0\ne[\partial T_1]\in H_1(\partial N_1)$. By Juh\'asz \cite[Lemma~4.5]{Ju2}, we can isotope $T_1$ to be a decomposing surface $T_1'$, such that every component of $\partial T_1'$ intersects both $R_+(\delta_1)$ and $R_-(\delta_1)$, and the decompositions along $T_1$ and $T_1'$ give us the same sutured manifold. Since $\delta_1$ separates $\partial N_1$ into two parts, the algebraic intersection of $\partial T_1'$ with the sutures $s(\delta_1)$ is zero. We can assume $\partial T_1'\cap \delta_1$ consists of $2n$ vertical arcs, then $\partial_+ T_1'$ consists of $n$ arcs $a_1^+,\dots,a_n^+$, and $\partial_- T_1'$ consists of $n$ arcs $a_1^-,\dots,a_n^-$. 

Since $g(G)>1$, applying Construction~\ref{constr:CPtorus} to a product annulus of type NN, we can increase the genus of $R_-(\delta_2)$ as large as possible. Construction~\ref{constr:CPtorus} does not change the number $n$, because $n$ is determined by $T_1'\subset N_1$ while Construction~\ref{constr:CPtorus} does not change $(N_1,\delta_1)=\mathscr{R}(M,\gamma)$. So we may assume $g(R_-(\delta_1))>\frac n2$.
Now since $N_2$ is connected, we can find disjoint arcs $b_1^-,\dots,b_n^-\subset R_-(\delta_2)$, such that $b_i^-$ connects the endpoints of $a_i^-$ and $R_-(\delta_2)-(b_1^-\cup\cdots\cup b_n^-)$ is connected. 

Let $S_1$ be the union of $T_1'$ and the product disks $b_1^-\times I,\dots,b_n^-\times I\subset N_2$, then $\partial_{\pm}S_1$ consists of $n$ simple closed curves whose homology classes are linearly independent in $H_1(R_{\pm}(\gamma))$. The decomposition $(M,\gamma)\stackrel{S_1}{\rightsquigarrow}(M_1,\gamma_1)$ is taut, and $$\overline{C}(M_1,\gamma_1)=\overline{C}(N_1\Bslash T_1)<C(N_1,\delta_1)=\overline{C}(M,\gamma),$$ where the inequality follows by applying Lemma~\ref{lem:Complexity} to the decomposition of $(N_1,\delta_1)$ along $T_1$.

At this stage we can construct $(Y',G')$ by gluing $G_+$ to $G_-$ via a homeomorphism which sends $\partial_+S_1$ to $\partial_- S_1$. However, in order to comply with Remark~\ref{rem:OneCurve}, we need to do more. We can choose a subsurface $P_{\pm}\subset G_{\pm}$ such that $\partial P_{\pm}=(\pm C_{\pm})\sqcup(\mp \partial_{\pm}S_1)$, where $C_{\pm}$ is a non-separating simple closed curve in $G_{\pm}$. We can glue $P_-,P_+$ to $S_1$, then do cut-and-pastes with $G_-,G_+$, thus get a taut surface $S\subset M$ with $\partial_{\pm}S=C_{\pm}$. Now we can apply Construction~\ref{constr:Reglue} to $S\subset M$ to get a pair $(Y',G')$. As in the proof of Lemma~\ref{lem:OneNN}, we can show that $\mathscr{R}(M\Bslash S)=\mathscr{R}(M_1)$. So $$C(Y',G')=\overline{C}(M\Bslash S)=\overline{C}(M_1,\gamma_1)<\overline{C}(M,\gamma)=C(Y,G).$$
\end{proof}

\begin{lem}\label{lem:NS+SN}
Suppose that $(M,\gamma)$ does not contain product annuli of type NN, while it contains both product annuli of type NS and product annuli of type SN. Then there exists a taut connected surface $S\subset M$ such that each of $\partial_-S$ and $\partial_+S$ consists of a single non-separating simple closed curve.
\end{lem}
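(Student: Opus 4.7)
The plan is to construct $S$ by combining the two product annuli with pair-of-pants subsurfaces of $G_{\pm}$ and then tautifying.

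Let $A_1$ be the NS-annulus and $A_2$ the SN-annulus, and set $c_i^{\pm}=[\partial_{\pm}A_i]\in H_1(G_{\pm})$. By hypothesis $c_1^-$ and $c_2^+$ are primitive non-separating, while $c_1^+=c_2^-=0$ in $H_1(G_{\pm})$. Since $A_i$ is a properly embedded annulus in $M$, $c_i^+$ and $c_i^-$ are homologous in $H_1(M)$ up to sign, so $c_1^-\in\mathcal V^-$ and $c_2^+\in\mathcal V^+$; in particular $(c_2^+,c_1^-)\in\mathcal K$. After isotopy assume $\partial A_1\cap\partial A_2=\emptyset$. Choose arcs $\alpha_{\pm}\subset G_{\pm}$ from $\partial_{\pm}A_1$ to $\partial_{\pm}A_2$, and let $P_{\pm}\subset G_{\pm}$ be the pair-of-pants regular neighborhood of $\partial_{\pm}A_1\cup\alpha_{\pm}\cup\partial_{\pm}A_2$. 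The third boundary circle $c_{\pm}$ of $P_{\pm}$ has homology $[\partial_{\pm}A_1]+[\partial_{\pm}A_2]$, which equals $c_2^+$ on the $+$ side and $c_1^-$ on the $-$ side, both non-separating. Push $P_{\pm}$ slightly into $M$ through a collar of $G_{\pm}$ and set $S_0:=A_1\cup A_2\cup P_+\cup P_-$; this is a properly embedded surface with $\partial_{\pm}S_0=c_{\pm}$, and its dual graph of pieces is $K_{2,2}$, hence $S_0$ is connected.

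To obtain a taut surface with the same boundary, apply Theorem~\ref{thm:ExistTaut} with $\lambda=c_+\sqcup c_-$ (automatically efficient, as $R(\gamma)$ is closed) and with homology class $y=[S_0]\in H_2(M,\partial M)$, yielding a taut surface $T\subset M$ such that $T\cap R(\gamma)=\lambda$ and every component of $T$ meets $R(\gamma)$.

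The main obstacle is to ensure $T$ is connected. Because $c_{\pm}\in\mathcal V^{\pm}$, the pair $(c_+,c_-)$ is not primitive in $\mathcal K$, so a priori $T$ could split as $T_+\sqcup T_-$ with $\partial T_{\pm}=c_{\pm}$. In such a splitting each $T_{\pm}$ is incompressible with a single non-separating boundary circle on $G_{\pm}$; incompressibility of $G$ in $M$ prevents $T_{\pm}$ from being a disk, giving $\chi_-(T_{\pm})\geq 1$, and $\chi_-(T)=\chi_-([S_0])\leq \chi_-(S_0)=2$ forces each $T_{\pm}$ to be a once-punctured torus. The hypothesis that $(M,\gamma)$ contains no product annulus of type NN is invoked precisely to exclude this splitting: gluing $T_{\pm}$ to $P_{\pm}$ along $c_{\pm}$ and compressing the once-punctured-torus handles would produce a product annulus of type NN, contradicting the hypothesis. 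Alternatively, if the splitting cannot be ruled out outright, one band-sums $T_+$ and $T_-$ in $M$ and compresses the result along the non-separating disk so introduced, returning to a connected taut representative in the class $[S_0]$. This yields the desired taut connected surface $S$.
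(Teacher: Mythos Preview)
Your construction of $S_0$ is essentially the same surface the paper builds, but your argument has two genuine gaps.

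\textbf{Disjointness of $A_1$ and $A_2$.} You only arrange $\partial A_1\cap\partial A_2=\emptyset$, but for $S_0=A_1\cup A_2\cup P_+\cup P_-$ to be properly embedded (which you claim) you need the annuli themselves to be disjoint in $M$. Nothing in the hypotheses guarantees this can be achieved by isotopy. The paper handles this with its first paragraph: it shows that whenever a component of $M\Bslash\mathscr R(M,\gamma)$ contains an NS (resp.\ SN) annulus, some component of $\delta_1\subset\partial_v\mathscr R(M)$ is already of that type. Thus $A_1$ and $A_2$ can be taken among the components of $\partial_v\mathscr R(M)$ and are automatically disjoint.

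\textbf{Connectedness of $T$.} This is the main gap. Since $c_{\pm}\in\mathcal V^{\pm}$, a splitting $T=T_+\sqcup T_-$ with $\partial T_{\pm}=c_{\pm}$ really can occur, and neither of your two arguments excludes it. First argument: $T_+\cup_{c_+}P_+$ is a genus-one surface whose \emph{both} boundary circles $\partial_+A_1,\partial_+A_2$ lie in $G_+$; even if it were compressible (and it need not be, since $T_+$ is incompressible), the resulting annulus would have both boundary components in $G_+$ and hence is not a product annulus at all, let alone one of type NN. Second argument: a band-sum of $T_+$ and $T_-$ raises $\chi_-$ to $3$, and there is no ``non-separating disk introduced'' along which to compress --- compressing the obvious cocore of the band just undoes the band-sum. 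So you are left with a connected surface that is not norm-minimizing.

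The paper avoids this issue entirely by \emph{not} passing through Theorem~\ref{thm:ExistTaut}. Once $A_1$ and $A_2$ are disjoint, the explicit surface $S=P_-\cup A_1\cup A_2\cup P_+$ is connected by construction. Tautness of $S$ is then proved directly: decomposing $M'=M\Bslash S$ further along $(G_-\cup G_+)\setminus S$ yields $((G_-\times I)\Bslash\widetilde P_-)\sqcup(M\Bslash(A_1\cup A_2))\sqcup((G_+\times I)\Bslash\widetilde P_+)$, which is taut, so Lemma~\ref{lem:RevTaut} gives that $M\stackrel{S}{\rightsquigarrow}M'$ is taut.
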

\begin{proof}
Let $(N_1,\delta_1)$ be a component of the product sutured manifold $M\Bslash \mathscr{R}(M,\gamma)$. We claim that if $N_1$ contains a product annulus of type NS (or type SN) in $M$, then at least one component of $\delta_1$ is a product annulus of type NS (or type SN) in $M$.  
If fact,
suppose $A_1\subset N_1$ is a product annulus of type NS in $M$, then $[\partial_-A_1]\ne0$ in $H_1(G_-)$. Since $M$ does not contain product annuli of type NN, $R_-(\delta_1)$ must be planar, otherwise the product sutured manifold $(N_1,\delta_1)$ must contain a type NN product annulus, which must also be of type NN in $M$. So $[\partial_-A_1]$ is equal to the sum of some components of $\partial R_-(\delta_1)$. It follows that there is a component $C_{1-}$ of $\partial R_-(\delta_1)$ with $[C_{1-}]\ne0$ in $H_1(G_-)$. So $C_{1-}$ is non-separating in $G_-$. Since $M$ does not contain annuli of type NN, the annulus $C_{1-}\times I\subset\delta_1$ must be of type NS. The same argument works for type SN annuli.

By the last paragraph, we conclude that $\partial_v\mathscr{R}(M)$ contains a product annulus $A_1$ of type NS and a product annulus $A_2$ of type SN, thus the two annuli $A_1,A_2$ are disjoint. We observe that $[\partial_{\pm}A_1]+[\partial_{\pm}A_2]$ is always a primitive element in $H_1(G_{\pm})$. So we can find a non-separating curve $C_{\pm}\in G_{\pm}$ which cobounds a subsurface $\pm P_{\pm}$ with $-\partial_{\pm}A_1,-\partial_{\pm}A_2$. Gluing $P_-,A_1,A_2$ and $P_+$ together we get a connected surface $S$ as we did in the proof of Lemma~\ref{lem:OneNN}. Consider the decomposition $M\stackrel{S}{\rightsquigarrow}M'$. We can decompose $M'$ along $(G_-\cup G_+)\backslash S$ to get $$((G_-\times I)\Bslash\widetilde{P}_-)\sqcup(M\Bslash(A_1\cup A_2))\sqcup((G_+\times I)\Bslash\widetilde{P}_+)$$ which is taut. By Lemma~\ref{lem:RevTaut}, the decomposition $M\stackrel{S}{\rightsquigarrow}M'$ is taut.
\end{proof}

\begin{lem}\label{lem:NoNN+NS}
Suppose $(M,\gamma)$ does not contain product annuli of type NN or type NS, then there exists a taut decomposition $(M,\gamma)\stackrel{S}{\rightsquigarrow}(M',\gamma')$ and a positive integer $k$ such that $\partial_{\pm}S$ consists of $k$ parallel non-separating curves, $S$ satisfies Remark~\ref{rem:OneCurve}, and $\overline{C}(M',\gamma')<\overline{C}(M,\gamma)$.
\end{lem}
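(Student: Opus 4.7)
The plan is to construct the surface $S$ via Proposition~\ref{prop:HomoC} and Theorem~\ref{thm:ExistTaut}, and then to establish the complexity drop by passing to the reduced sutured manifold and applying Lemma~\ref{lem:Complexity}.

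First I would build $S$. Applying Proposition~\ref{prop:HomoC} to $M$ yields primitive classes $c_\pm\in H_1(G_\pm;\mathbb Z)$, represented by non-separating simple closed curves $C_\pm\subset G_\pm$, with $c_+=c_-$ in $H_1(M;\mathbb Q)$. Let $k$ be the smallest positive integer with $k[C_+]=k[C_-]$ in $H_1(M;\mathbb Z)$, and let $\lambda\subset R(\gamma)$ consist of $k$ parallel copies of $C_+$ in $G_+$ together with $k$ parallel copies of $C_-$ in $G_-$, oriented so that $[\lambda]=k[C_+]-k[C_-]\in H_1(\partial M)$ lies in $\mathcal K$. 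Since $R(\gamma)=G_+\sqcup G_-$ is closed, $\lambda$ is automatically efficient, and $[\lambda]$ lies in the image of $\partial\co H_2(M,\partial M)\to H_1(\partial M)$ by construction. Theorem~\ref{thm:ExistTaut} then produces a taut decomposing surface $S\subset M$ with $S\cap R(\gamma)=\lambda$ and every component of $S$ meeting $R(\gamma)$, so $S$ has no closed components and $\partial_\pm S$ consists of $k$ parallel copies of $C_\pm$. This verifies the hypotheses of Remark~\ref{rem:OneCurve}. Moreover, at least one component of $S$ is not boundary-parallel: otherwise $S$ would be isotopic rel boundary into $\partial M$, forcing $[\lambda]=0\in H_1(\partial M)$, whereas $k[C_\pm]\neq 0\in H_1(G_\pm)$ since $C_\pm$ are primitive non-separating.

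For the complexity estimate, note that $(M,\gamma)$ contains no product disks, since $s(\gamma)=\emptyset$ forces the suture-intersection condition to fail. Under the hypothesis, the only product annuli in $(M,\gamma)$ are of type SN or SS. To circumvent the fact that Lemma~\ref{lem:Complexity} presupposes no product annuli, I would pass to the reduced sutured manifold $\mathscr R(M,\gamma)$, which by definition contains no non-boundary-parallel product annuli or product disks. After isotoping $S$ so that it meets $\partial_v\mathscr R(M,\gamma)$ transversely and efficiently, the restriction $S\cap \mathscr R(M,\gamma)$ becomes a taut decomposing surface in $\mathscr R(M,\gamma)$ whose complement may be identified with $\mathscr R(M',\gamma')$. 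Lemma~\ref{lem:Complexity} applied inside $\mathscr R(M,\gamma)$ then gives $C(\mathscr R(M',\gamma'))<C(\mathscr R(M,\gamma))$, that is, $\overline C(M',\gamma')<\overline C(M,\gamma)$.

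The main obstacle is to confirm that the restricted surface $S\cap\mathscr R(M,\gamma)$ still carries a non-boundary-parallel component, so that Lemma~\ref{lem:Complexity} yields a strict inequality rather than equality. This is precisely where the hypothesis that $(M,\gamma)$ admits no product annuli of type NN or NS is needed: it prevents any component of $S$ carrying the non-separating curve $k\cdot C_\pm$ from being absorbed into the product pieces discarded when forming $\mathscr R$. In effect, the non-separating boundary of $S$ cannot be hidden inside the product part, so $S\cap\mathscr R(M,\gamma)$ inherits a genuinely non-trivial decomposing component, which closes the argument.
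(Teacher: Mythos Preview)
Your overall strategy coincides with the paper's: build $S$ from Proposition~\ref{prop:HomoC} and Theorem~\ref{thm:ExistTaut}, pass to the reduced sutured manifold, and invoke Lemma~\ref{lem:Complexity} there. The construction of $S$ and the reduction to $\mathscr R(M,\gamma)$ are fine, and your identification $\overline C(M',\gamma')=\overline C\big(\mathscr R(M)\Bslash S_1\big)$ (with $S_1=S\cap\mathscr R(M)$) is essentially what the paper obtains via the commutative diagram of decompositions along $\rho=\partial_v\mathscr R(M)$, using that $M_2\Bslash S_2$ remains a product.

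The gap is exactly where you flag it: you assert that the no-NN/NS hypothesis ``prevents any component of $S$ carrying the non-separating curve $k\cdot C_\pm$ from being absorbed into the product pieces,'' but this is a restatement of the goal, not an argument. What must be shown is that $[\partial S_1]\ne 0$ in $H_1(\partial\mathscr R(M,\gamma))$, so that some component of $S_1$ is genuinely not boundary-parallel in $\mathscr R(M)$. The paper does this by a case split on $S\cap\rho$ after arranging the intersection to consist of parallel essential arcs or parallel essential closed curves in each component of $\rho$. If arcs occur, $[\partial S_1]\ne 0$ is immediate. If only closed curves occur, then $\partial_-S$ is disjoint from $\partial_-\rho$; since there are no product annuli of type NN or NS, every component of $\partial_-\rho$ is \emph{separating} in $G_-$, so each copy of $C_-$ stays non-separating inside its component of $G_-\Bslash\partial_-\rho$. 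If that component lay in $R_-(M_2)$, the product structure on $M_2$ would produce a product annulus with non-separating lower boundary, i.e.\ a type NN or NS annulus in $M$, contradiction. Hence $\partial_-S\subset R_-(\mathscr R(M,\gamma))$ and $[\partial_-S]\ne 0$ there. This is the missing content; once you supply it, your argument and the paper's coincide.
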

\begin{proof}
By Proposition~\ref{prop:HomoC} and Theorem~\ref{thm:ExistTaut}, we can find a taut surface $S'$ and a positive integer $k$ such that $\partial_{\pm}S'$ consists of $k$ parallel non-separating curves, and $S'$ satisfies Remark~\ref{rem:OneCurve}. 

Let $\rho=\partial_v\mathscr{R}(M)$, $M_2=M\Bslash\mathscr{R}(M)$.
We can modify $S'$ to get a new decomposing surface $S$ with $\partial_{\pm} S$ being isotopic to $\partial_{\pm}S'$ in $G_{\pm}$, such that $S$ is still taut and the intersection of $S$ with each component of $\rho$ consisting of either parallel oriented essential arcs or parallel oriented essential closed curves. 
Let $S_1=S\cap \mathscr{R}(M), S_2=S\Bslash S_1$. 

We claim that $[S_1\cap\partial\mathscr{R}(M,\gamma)]\ne0$ in $H_1(\partial\mathscr{R}(M,\gamma))$. This claim is clearly true if $S$ intersects a component of $\rho$ in parallel oriented essential arcs. Now we assume that $S\cap\rho$ consists of essential closed curves, then $\partial_-S$ is disjoint with $\rho$. As $M$ does not contain product annuli of type NN or type NS, every component of $\partial_-\rho$ is separating in $G_-$, hence any component of $\partial_-S$ is non-separating in the component of $G_-\Bslash(\partial_-\rho)$ containing it. Using the fact that $M$ does not contain product annuli of type NN again, we see that $\partial_- S\subset R_-(\mathscr{R}(M,\gamma))$. So $[\partial_-S]\ne0$ in $H_1(R_-(\mathscr{R}(M,\gamma)))$, hence our claim holds.

We have a commutative diagram of sutured manifold decompositions:
$$
\xymatrix{
(M,\gamma)\ar@{~>}[r]^-{\rho}\ar@{~>}[d]^{S}&\mathscr{R}(M)\sqcup(M_2,\rho)\ar@{~>}[d]^{S_1\sqcup S_2}\\
(M',\gamma')\ar@{~>}[r]^-{\rho\Bslash (S\cap\rho)}&(M_1',\gamma_1')\sqcup(M_2',\gamma_2').
}$$
Here the decomposing surface $\rho\Bslash (S\cap\rho)$ consists of product disks and product annuli. By Lemma~\ref{lem:ProdIff}, $(M_1',\gamma_1')\sqcup(M_2',\gamma_2')$ is taut.
By \cite[Lemma~2.4]{GabaiFibred}, $(M_2',\gamma_2')$ is a product sutured manifold. So $\overline{C}(M',\gamma')=\overline{C}(M_1',\gamma_1')$. By Lemma~\ref{lem:Complexity} and the claim in the last paragraph, $$\overline{C}(M_1',\gamma_1')<C(\mathscr{R}(M))=\overline{C}(M,\gamma).$$
So $\overline{C}(M',\gamma')<\overline{C}(M,\gamma)$.
\end{proof}

Now we are ready to prove Theorem~\ref{thm:PreciseGabai}.

\begin{proof}
[Proof of Theorem~\ref{thm:PreciseGabai}]
Let $G$ be a connected taut surface in $Y$. If $\mathscr{R}(M,\gamma)=\emptyset$, then $(M,\gamma)$ is a product sutured manifold and we are done. 

If $\mathscr{R}(M,\gamma)\ne\emptyset$, we claim that we can find a $(Y',G')\prec(Y,G)$ such that $C(Y',G')<C(Y,G)$.
If $M$ contains a product annulus of type NN, we can apply Lemmas~\ref{lem:OneNN} and \ref{lem:GluePDisk}. From now on we assume $M$ does not contain product annuli of type NN. If $M$ contains both product annuli of type NS and product annuli of type SN, then we can  get a surface $S$ as in Lemma~\ref{lem:NS+SN}. Apply Construction~\ref{constr:Reglue} to $S\subset M$, we get a $(Y',G')\prec(Y,G)$ such that $(M',\gamma')$ contains a product annulus of type NN, as explained in Construction~\ref{constr:CutPaste}. This reduces to the previous case. If $M$ does not contain product annuli of type NS, we can apply Lemma~\ref{lem:NoNN+NS} then use Construction~\ref{constr:Reglue}. The same argument works if $M$ does not contain product annuli of type SN. This finishes the proof of the claim.

Now we work with the pair $(Y',G')$ and repeat the above procedure until we get a pair such that the corresponding sutured manifold is a product. This proves our theorem.
\end{proof}


\section{Heegaard Floer homology, Thurston norm, and fibrations}\label{Sect:Norm}

In this section, we will review the results about Heegaard Floer homology, Thurston norm, and fibrations. Using Gabai's internal hierarchy, we will take a new look at these results. As a consequence, we improve these results by taking account of the homological action.

\subsection{Review of the results}

Basic classes (Definition~\ref{defn:Basic}) are closely related to the Thurston norm. In fact, \cite[Theorem~1.1]{OSzGenus} implies that the support of the basic classes (for a twisted version of Heegaard Floer homology) determines the Thurston norm. The following is a statement of this theorem for untwisted Heegaard Floer homology (see \cite[Theorem~2.3]{NiNormCos}).

\begin{thm}\label{thm:ThNorm}
Suppose that $Y$ is a closed oriented $3$--manifold, $h\in H_2(Y)$, then $\mathcal B_Y(h)\ne\emptyset$.
\end{thm}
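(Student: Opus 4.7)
The plan is to reduce the general statement to the fibered case via Gabai's internal hierarchy. Several standard reductions come first. By the K\"unneth formula for connected sums in Heegaard Floer homology I may assume $Y$ is irreducible, and since $\mathcal B_Y(mh)=\mathcal B_Y(h)$ for every positive integer $m$, I am free to replace $h$ by any positive multiple. If $\chi_-(h)=0$ the Adjunction Inequality gives $|\langle\alpha,h\rangle|\le 0$ for every basic class $\alpha$, so $\mathcal B_Y(h)$ equals the set of all basic classes of $Y$; this set is nonempty because $HF^+(Y)\ne 0$. The substantive case is therefore $\chi_-(h)>0$, where standard $3$--manifold topology allows me to replace $h$ by a multiple represented by a connected Thurston-norm-minimizing surface $G\subset Y$ with $g(G)>1$, so that Theorem~\ref{thm:PreciseGabai} applies to $(Y,G)$.

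Applying Theorem~\ref{thm:PreciseGabai} to $(Y,G)$ yields $(F\times S^1,F)\prec(Y,G)$, which unpacks into a chain
$$(Y,G)=(Y_0,G_0),(Y_1,G_1),\ldots,(Y_n,G_n)=(F\times S^1,F),$$
in which each $Y_{i+1}$ is obtained from $Y_i$ by cutting along the connected taut surface $G_i$ and regluing, and $\mathcal B_{Y_{i+1}}([G_{i+1}])\subset\mathcal B_{Y_{i+1}}([G_i])$. For the terminal pair, the Ozsv\'ath--Szab\'o calculation of $HF^+$ for a surface times a circle produces a Spin$^c$ structure whose first Chern class pairs with $[F]$ to $-\chi_-(F)$ and has nonzero $HF^+$, so $\mathcal B_{Y_n}([G_n])\ne\emptyset$.

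The key technical point, which I expect to be the main obstacle, is a cut-and-reglue invariance of the bottommost Heegaard Floer summand: the group
$$HF^+(Y_i|G_i)\;:=\bigoplus_{\mathfrak s\,:\,c_1(\mathfrak s)\in\mathcal B_{Y_i}([G_i])}HF^+(Y_i,\mathfrak s)$$
should depend only on the sutured manifold $(Y_i\Bslash G_i,\emptyset)$, and not on the particular homeomorphism used to reassemble $Y_i$ from it. Granting this invariance, one concludes that for each $i$,
$$\mathcal B_{Y_i}([G_i])\ne\emptyset\iff\mathcal B_{Y_{i+1}}([G_i])\ne\emptyset,$$
where $G_i$ is viewed as a surface in $Y_{i+1}$ via the new gluing. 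This is the same ``bottommost summand'' mechanism underlying the Kronheimer--Mrowka approach in \cite{KMSuture}; I would establish it by identifying $HF^+(Y_i|G_i)$ with an appropriate piece of sutured Floer homology of $Y_i\Bslash G_i$, in the spirit of \cite{Ju2,Lekili,NiClosedFib}.

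A downward induction then finishes the proof. Starting from $\mathcal B_{Y_n}([G_n])\ne\emptyset$, the containment in the hierarchy gives $\mathcal B_{Y_n}([G_{n-1}])\ne\emptyset$, the invariance then gives $\mathcal B_{Y_{n-1}}([G_{n-1}])\ne\emptyset$, and iterating up the chain produces $\mathcal B_Y([G])\ne\emptyset$. Since $[G]$ is a positive integer multiple of $h$, this equals $\mathcal B_Y(h)$, which is therefore nonempty.
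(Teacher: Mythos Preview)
Your argument is essentially the paper's own proof (Proposition~\ref{prop:NormSummand}): reduce to irreducible via K\"unneth, apply Theorem~\ref{thm:PreciseGabai}, and use cut--reglue invariance together with the computation for $F\times S^1$ to pull nonemptiness of $\mathcal B$ back along the chain. The ``key technical point'' you single out is exactly Proposition~\ref{prop:CutReglue}; the paper obtains it directly from the surgery exact triangle (or Kronheimer--Mrowka excision), which is more elementary than the sutured identification you propose. One small caution: your claimed reduction to a \emph{connected} taut representative by passing to a multiple of $h$ is not as routine as you suggest---the paper avoids this by keeping $G$ possibly disconnected and running the hierarchy on a genus $>1$ component while tracking the rest of $G$ through the cut-and-pastes.
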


Suppose that $K\subset Y$ is a rationally null-homologous knot. Let $X=Y\Bslash K$. 

\begin{defn}
A cohomology class $\alpha\in H^2(Y,K)\cong H^2(X,\partial X)$ is a {\it(knot Floer) basic class}, if $\alpha=c_1(\xi)$ for some $\xi\in\relspin(Y,K)$ with $\widehat{HFK}(Y,K,\xi)\ne0$. Let $\underline{\mathcal B}_{(Y,K)}$ be the set of all basic classes. Let $j^*\co H^2(Y,K)\to H^2(Y)$ be the pull-back map. We define $\mathcal B_{(Y,K)}=j^*(\underline{\mathcal B}_{(Y,K)})$.
\end{defn}

\begin{defn}
Suppose that $K$ is an oriented  rationally null-homologous knot in a closed $3$--manifold
$Y$. A properly embedded oriented surface $F\subset Y\Bslash K$ is a {\it rational Seifert-like
surface} for $K$, if $\partial F$ consists of a nonzero number of parallel essential curves on $\partial\nu(K)$, such that the orientation of $\partial F$ is coherent with the orientation of $K$. When $F$ contains no closed components, we say
that $F$ is a {\it rational Seifert surface} for $K$. 
\end{defn}

\begin{prop}\label{prop:ExtremeSpinc}
Suppose that $K$ is an oriented rationally null-homologous knot in $Y$ such that $\partial\nu(K)$ is incompressible in $X$. Let $F$ be a rational Seifert-like surface for $K$. Then
\begin{eqnarray*}
\min_{\alpha\in\underline{\mathcal B}_{(Y,K)}}\{\langle\alpha,[F]\rangle\}&=&-\chi_-([F]),\\
\max_{\alpha\in\underline{\mathcal B}_{(Y,K)}}\{\langle\alpha,[F]\rangle\}&=&\chi_-([F])+2|[\partial F]\cdot[\mu]|,
\end{eqnarray*}
where $\mu\in \partial\nu(K)$ is the meridian of $K$.
\end{prop}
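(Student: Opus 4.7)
The strategy is standard for knot Floer detection results: the adjunction inequality for knot Floer homology yields one-sided bounds, Gabai's hierarchy is used to realize the lower bound by a nontrivial basic class, and the conjugation symmetry of $\widehat{HFK}$ then upgrades this to the matching upper bound. The adjunction inequality gives
\[
\langle\alpha,[F]\rangle \;\geq\; -\chi_-([F]) \qquad \text{for all } \alpha\in\underline{\mathcal B}_{(Y,K)},
\]
so only the realization of each bound requires work.

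To realize the lower bound, I would first cap $F$ off in $\nu(K)$ to obtain a closed, connected, taut surface $\widehat F\subset Y$ with $[\widehat F]\cdot[K]=|[\partial F]\cdot[\mu]|$; the hypothesis that $\partial\nu(K)$ is incompressible in $X$ is what keeps $\widehat F$ taut. Apply Theorem~\ref{thm:PreciseGabai} to the pair $(Y,\widehat F)$ to obtain a successor $(F_n\times S^1,F_n)\prec(Y,\widehat F)$. On the terminal surface-bundle the bottommost $\mathrm{Spin}^c$ structure satisfies $HF^+(F_n\times S^1,\mathfrak s)\ne 0$, so $\mathcal B_{F_n\times S^1}([F_n])$ is nonempty. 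Iterating Lemma~\ref{lem:h1h2} (together with the containments $\mathcal B_{Y_{i+1}}([G_{i+1}])\subset\mathcal B_{Y_{i+1}}([G_i])$ built into the definition of $\prec$, combined with the standard identifications of $HF^+$ across each cut-and-reglue step) transports this non-vanishing back up the hierarchy to produce a class $\alpha_0\in\mathcal B_Y([\widehat F])$. The cut-and-reglue correspondence (cf.\ Proposition~\ref{prop:CutReglue}) lifts $\alpha_0$ to a relative $\mathrm{Spin}^c$ structure $\xi\in\relspin(Y,K)$ with $\widehat{HFK}(Y,K,\xi)\ne 0$ and $\langle c_1(\xi),[F]\rangle=-\chi_-([F])$.

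For the upper bound, apply the conjugation involution $J\co\relspin(Y,K)\to\relspin(Y,K)$ to this $\xi$. Since $\widehat{HFK}(Y,K,J\xi)\cong\widehat{HFK}(Y,K,\xi)$, the class $J\xi$ is also basic; and for a rationally null-homologous knot the sum $c_1(\xi)+c_1(J\xi)$ is a multiple of $\mathrm{PD}[\mu]$ whose evaluation on $[F]$ equals $2|[\partial F]\cdot[\mu]|$, so
\[
\langle c_1(J\xi),[F]\rangle \;=\; -\langle c_1(\xi),[F]\rangle+2|[\partial F]\cdot[\mu]| \;=\; \chi_-([F])+2|[\partial F]\cdot[\mu]|.
\]
Combining the adjunction inequality with the same conjugation symmetry shows this is maximal.

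The main technical obstacle is the capping/closure step: choosing $\widehat F$ (equivalently, fixing the correspondence between absolute $\mathrm{Spin}^c$ structures on $Y$ and relative ones on $(Y,K)$) precisely enough that the bottommost class produced by the hierarchy in $\mathcal B_Y([\widehat F])$ really corresponds to a nontrivial $\widehat{HFK}$ class at the expected Alexander grading $-\chi_-([F])$ rather than at a shifted one. Once this bookkeeping is in place, the rest of the argument is mechanical.
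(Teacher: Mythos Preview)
The paper does not prove this proposition from scratch; its proof is a list of citations (the null-homologous case from \cite{NiNormCos}, the passage to rationally null-homologous knots from \cite{NiRatSphere,HedCable,OSzLinkNorm}, and the conjugation symmetry from \cite{OSzLink}). Your attempt to give a self-contained argument via the internal hierarchy is a different route, but it has a genuine gap, located exactly at the step you label ``bookkeeping''.

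Applying Theorem~\ref{thm:PreciseGabai} to $(Y,\widehat F)$ would at best yield an absolute $\mathrm{Spin}^c$ structure $\mathfrak s_0$ on $Y$ with $HF^+(Y,\mathfrak s_0)\ne 0$ and $\langle c_1(\mathfrak s_0),[\widehat F]\rangle=-\chi_-([\widehat F])$. What you need is a \emph{relative} class $\xi\in\relspin(Y,K)$ with $\widehat{HFK}(Y,K,\xi)\ne 0$ and $\langle c_1(\xi),[F]\rangle=-\chi_-([F])$. Two things go wrong. First, there is no reason the Thurston norm of $[\widehat F]$ in $H_2(Y)$ should match that of $[F]$ in $H_2(X,\partial X)$: a norm-minimizer for $[\widehat F]$ in $Y$ need not restrict to one for $[F]$ in $X$, so the capped surface $\widehat F$ need not be taut in $Y$ (incompressibility of $\partial\nu(K)$ in $X$ does not help here). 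Second, and more fundamentally, even given $\mathfrak s_0$ you only know that \emph{some} lift $\xi$ of $\mathfrak s_0$ has $\widehat{HFK}(Y,K,\xi)\ne 0$; the evaluation $\langle c_1(\xi),[F]\rangle$ varies over the lifts by multiples of $2|[\partial F]\cdot[\mu]|$, and pinning down the extremal lift is precisely the content of the proposition. Your invocation of Proposition~\ref{prop:CutReglue} does not address this --- that result concerns cutting and regluing along a surface $G$ with $G\cap K=\emptyset$, whereas your $\widehat F$ meets $K$. (There is also the side issue that Theorem~\ref{thm:PreciseGabai} requires $g(\widehat F)>1$.) The cited proofs avoid all of this by closing up the knot \emph{complement} --- via $0$--surgery, large surgery, or a sutured construction --- so that the Alexander grading on $(Y,K)$ becomes part of the $\mathrm{Spin}^c$ grading on a closed manifold, after which genus detection there yields the extremal class directly.
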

\begin{proof}
This is a standard result, although not explicit in the literature. The reader is referred to Ni \cite[Theorem~2.4]{NiNormCos} for the case when $K$ is null-homologous, and to Ni \cite[Theorem~3.1]{NiRatSphere}, Hedden \cite{HedCable} and Ozsv\'ath--Szab\'o\cite{OSzLinkNorm} for the procedure of passing from null-homologous knots to  rationally null-homologous knots. The apparent asymmetry between the min and the max actually reflects the symmetry in knot Floer homology, see Ozsv\'ath--Szab\'o\cite[Proposition~8.2]{OSzLink}.
\end{proof}

\begin{defn}
Suppose that $\varphi\in H_2(Y,K)$ is a homology class. Let $$\underline{\mathcal B}_{(Y,K)}(\varphi)=\left\{\alpha\in\underline{\mathcal B}_{(Y,K)}\big|\:\langle \alpha,\varphi\rangle=-\chi_-(\varphi)\right\}$$
be the set of {\it bottommost (relative knot Floer) basic classes} on $X$ with respect to $\varphi$.
When $h\in H_2(Y)$, let $j_*\co H_2(Y)\to H_2(Y,K)=H_2(X,\partial X)$ be the natural map, then let $${\mathcal B}_{(Y,K)}(h)=j^*\underline{\mathcal B}_{(Y,K)}(j_*(h))$$
be the set of {\it bottommost (knot Floer) basic classes} on $Y$ with respect to $h$.
\end{defn}

Thus Proposition~\ref{prop:ExtremeSpinc} says that $\underline{\mathcal B}_{(Y,K)}(\varphi)\ne\emptyset$ for any $\varphi\in H_2(Y,K)$ representing a rational Seifert-like surface. There is a similar statement for the homology classes of closed surfaces.

\begin{prop}\label{prop:ThNormK}
Suppose that $K$ is an oriented rationally null-homologous knot in $Y$. Then for any $h\in H_2(Y)$, we have
$$
\underline{\mathcal B}_{(Y,K)}(j_*(h))\ne\emptyset,\qquad\mathcal B_{(Y,K)}(h)\ne\emptyset.
$$
\end{prop}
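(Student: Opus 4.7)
The equality $\mathcal{B}_{(Y,K)}(h) = j^*\underline{\mathcal{B}}_{(Y,K)}(j_*(h))$ reduces both assertions to showing $\underline{\mathcal{B}}_{(Y,K)}(j_*(h))\neq\emptyset$. My plan is to deduce this from Proposition~\ref{prop:ExtremeSpinc} by an asymptotic argument in the spirit of Lemma~\ref{lem:h1h2}(3), enlarging the given class in the direction of a rational Seifert-like class to put it into the range where Proposition~\ref{prop:ExtremeSpinc} applies, and then using pigeonhole together with piecewise-linearity of $\chi_-$ to pull the extremal property back to $j_*(h)$.

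First, since $K$ is rationally null-homologous there exists a rational Seifert-like surface $F_0$ for $K$; set $\varphi_0=[F_0]\in H_2(Y,K)$. Because $[K]$ is rationally null-homologous, every closed surface in $Y$ has algebraic intersection zero with $K$, so $\partial j_*(h)=0\in H_1(\partial\nu(K))$. Hence for each integer $m\geq 1$ the class
\[
\varphi_m := mj_*(h)+\varphi_0\in H_2(Y,K)
\]
satisfies $\partial\varphi_m=\partial\varphi_0\neq 0$, and therefore is represented by a rational Seifert-like surface (obtained, for instance, by cut-and-pasting $F_0$ with $m$ parallel closed representatives of $j_*(h)$ made disjoint from $K$). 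Proposition~\ref{prop:ExtremeSpinc} then gives $\underline{\mathcal{B}}_{(Y,K)}(\varphi_m)\neq\emptyset$ for every $m\geq 1$.

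Second, $\underline{\mathcal{B}}_{(Y,K)}$ is a finite set because $\widehat{HFK}(Y,K)$ is finitely generated, so the pigeonhole principle produces a single $\alpha\in\underline{\mathcal{B}}_{(Y,K)}$ lying in $\underline{\mathcal{B}}_{(Y,K)}(\varphi_m)$ for infinitely many $m$. By Thurston's theorem on piecewise-linearity of $\chi_-$, applied in $H_2(X,\partial X)$, there is a constant $C$ with $\chi_-(\varphi_m)=m\chi_-(j_*(h))+C$ for all sufficiently large $m$. Choosing two such large values $m_1<m_2$ on which $\alpha$ is bottommost, the relations
\[
m_i\langle\alpha,j_*(h)\rangle+\langle\alpha,\varphi_0\rangle=-m_i\chi_-(j_*(h))-C\qquad(i=1,2)
\]
subtract to give $\langle\alpha,j_*(h)\rangle=-\chi_-(j_*(h))$, so $\alpha\in\underline{\mathcal{B}}_{(Y,K)}(j_*(h))$.

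The principal obstacle is that Proposition~\ref{prop:ExtremeSpinc} requires $\partial\nu(K)$ to be incompressible in $X$. When $\partial\nu(K)$ is compressible, $K$ either sits in a 3--ball or is a rational unknot; in either case $\widehat{HFK}(Y,K)$ has a transparent structure (essentially a tensor product with $\widehat{HF}(Y)$), so the conclusion reduces directly to Theorem~\ref{thm:ThNorm}.
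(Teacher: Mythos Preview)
Your argument is correct and follows essentially the same route as the paper: add a large multiple of $j_*(h)$ to a rational Seifert class, invoke Proposition~\ref{prop:ExtremeSpinc} on the resulting Seifert-like classes, and use Thurston's eventual linearity of $\chi_-$ along the ray to extract the bottommost condition for $j_*(h)$. The only cosmetic difference is that the paper packages the last step as the containment $\underline{\mathcal B}_{(Y,K)}([F]+mj_*(h))\subset\underline{\mathcal B}_{(Y,K)}(j_*(h))$ for $m\gg0$ (by the analogue of Lemma~\ref{lem:h1h2}), whereas you pick one recurring $\alpha$ by pigeonhole and subtract two linear relations; these are equivalent uses of the same linearity. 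You are also more explicit than the paper about the incompressibility hypothesis in Proposition~\ref{prop:ExtremeSpinc}; the paper applies it without comment, while your reduction of the compressible case to Theorem~\ref{thm:ThNorm} (via $K$ lying in a ball or being a core of a solid torus summand) is the standard disposal and is fine.
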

\begin{proof}
Let $F$ be a rational Seifert surface for $K$, then $[F]+mj_*(h)$ is represented by a rational Seifert-like surface for $K$ for any $m\in\mathbb Z$. The same argument as in Lemma~\ref{lem:h1h2} shows that $$\underline{\mathcal B}_{(Y,K)}([F]+mj_*(h))\subset\underline{\mathcal B}_{(Y,K)}(j_*(h)),\quad\text{when $m$ is sufficiently large.}$$
Proposition~\ref{prop:ExtremeSpinc} implies that $\underline{\mathcal B}_{(Y,K)}([F]+mj_*(h))\ne\emptyset$, so $\underline{\mathcal B}_{(Y,K)}(j_*(h))\ne\emptyset$ and hence $\mathcal B_{(Y,K)}(h)\ne\emptyset$.
\end{proof}

Let $G\subset Y$ be a taut surface. Following Kronheimer and Mrowka \cite{KMSuture}, let 
$$HF^{\circ}(Y|G)=\bigoplus_{c_1(\mathfrak s)\in\mathcal B_Y([G])}HF^{\circ}(Y,\mathfrak s),$$
where $HF^{\circ}$ is one of the ``hat'' and ``$+$'' theories. Moreover, let 
$$HF^{\circ}(Y,[G],i)=\bigoplus_{\mathfrak s\in\spinc(Y),
\langle c_1(\mathfrak s),[G]\rangle=2i}HF^{\circ}(Y,\mathfrak s).$$

$HF^{\circ}(Y|G)$ contains information about fibrations on $Y$. Let us recall the main theorem in \cite{NiClosedFib}. (See also \cite{Gh,NiFibred,AiNi}.)

\begin{thm}\label{thm:ClosedFibre}
Suppose that $Y$ is a closed irreducible $3$--manifold, and $G\subset Y$ is
a connected surface of genus $g\ge2$.  
If $HF^+(Y|G)\cong\mathbb Z$, then $Y$ fibers over the
circle with $G$ as a fiber.
\end{thm}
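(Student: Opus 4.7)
The plan is to use Theorem~\ref{thm:PreciseGabai} to reduce to the trivial surface bundle case $(F\times S^1,F)$, where the conclusion is immediate, and then to argue that the hypothesis $HF^+(Y|G)\cong\mathbb{Z}$ forces the hierarchy to have length zero. If $M:=Y\Bslash G$ is already a product sutured manifold, then $Y$ is a mapping torus of a homeomorphism of $G$ and the theorem is immediate, so we may suppose otherwise. Theorem~\ref{thm:PreciseGabai} then produces a chain
\[
(Y_0,G_0)=(Y,G),(Y_1,G_1),\dots,(Y_n,G_n)=(F\times S^1,F)
\]
with $n\ge 1$, in which each $(Y_{i+1},G_{i+1})$ is obtained from $(Y_i,G_i)$ by a cut-and-reglue along a connected taut surface and satisfies $\mathcal{B}_{Y_{i+1}}([G_{i+1}])\subset\mathcal{B}_{Y_{i+1}}([G_i])$.

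Next I would propagate the Floer condition along the hierarchy. In a single step the underlying cut-open pieces of $Y_i$ and $Y_{i+1}$ agree up to isotopy, so the two closed manifolds differ only by the choice of regluing homeomorphism along $G_i$. The inclusion of bottommost basic classes in the definition of $\prec$ shows that $HF^+(Y_{i+1}|G_{i+1})$ is a direct summand of $HF^+(Y_{i+1}|G_i)$, and the latter is in turn comparable with $HF^+(Y_i|G_i)$ via the surgery exact sequence associated to the Dehn twist along a surface that realizes the cut-and-reglue. Combined with the direct computation $HF^+(F\times S^1|F)\cong\mathbb{Z}$ in the extremal Spin$^c$ structure, a standard consequence of the known calculation of $\widehat{HF}$ for products of surfaces with $S^1$, this yields the weak inequality $\mathrm{rank}\,HF^+(Y_i|G_i)\ge 1$ for every $i$.

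The heart of the argument, and the main obstacle, is to upgrade this weak inequality to a strict monotonicity: if $(Y_{i+1},G_{i+1})\prec(Y_i,G_i)$ is a nontrivial step in the hierarchy, in the sense that the cutting surface is essential inside $\mathscr{R}(M_i)$, then $\mathrm{rank}\,HF^+(Y_i|G_i)\ge 2$. Granting this, the hypothesis $HF^+(Y|G)\cong\mathbb{Z}$ forces $n=0$, so $M$ is a product after all, contradicting our standing assumption. To prove the strict increase I would analyze the cut-and-reglue locally as a Dehn twist along the cutting surface and exploit the condition $\mathcal{B}_{Y_{i+1}}([G_{i+1}])\subset\mathcal{B}_{Y_{i+1}}([G_i])$ together with Lemma~\ref{lem:h1h2} to detect an extra bottommost basic class on $Y_{i+1}$ that pulls back to a second summand of $HF^+(Y_i|G_i)$. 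This is the step most analogous to, and technically most delicate compared with, the sutured-manifold approaches of Juh\'asz and Kronheimer--Mrowka, where the corresponding rank estimate is obtained from the decomposition formula for sutured Floer homology; here one must carry out the rank comparison directly on closed-manifold Heegaard Floer groups, using Gabai's internal hierarchy as a substitute for sutured decomposition.
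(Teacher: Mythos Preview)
The paper does not actually prove Theorem~\ref{thm:ClosedFibre}; it is quoted from \cite{NiClosedFib} as a known result. The paper only remarks that ``this approach can also be used to prove Theorem~\ref{thm:ClosedFibre}. For simplicity, we do not give the proof here,'' and then \emph{assumes} Theorem~\ref{thm:ClosedFibre} in the proof of the subsequent refinement. So there is no proof in the paper to compare your proposal against; what you have written is an attempt to fill in the sketch the paper declines to give.

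Your framework is the right one, and the part you do establish (that $HF^{+}(Y_{i+1}|G_{i+1})$ is a direct summand of $HF^{+}(Y_i|G_i)$, hence $\mathrm{rank}\,HF^{+}(Y|G)\ge 1$) is exactly the content of Proposition~\ref{prop:NormSummand}. But you correctly identify, and do not resolve, the entire difficulty: the strict rank drop at a nontrivial hierarchy step. Everything in your proposal after ``Granting this'' is fine; everything before it is either the easy direction or a restatement of the problem.

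Your suggested mechanism for the strict inequality does not work as stated. You propose to use the inclusion $\mathcal B_{Y_{i+1}}([G_{i+1}])\subset\mathcal B_{Y_{i+1}}([G_i])$ together with Lemma~\ref{lem:h1h2} to find an ``extra'' bottommost basic class. But the paper's own analysis (in the proof of the theorem immediately following Theorem~\ref{thm:NormAct}) shows that this inclusion may well be an equality: the dichotomy there is exactly that either $\mathcal B_{Y'}([G'])\cap\mathcal B_{Y'}(-[\overline S]+m[G])=\emptyset$, in which case two disjoint summands appear, or $\mathcal B_{Y'}([G'])=\mathcal B_{Y'}([G])$, in which case $HF^{+}(Y'|G')\cong HF^{+}(Y'|G)$ and no rank drop occurs at that step. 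In the second branch the paper simply passes to the next step of the hierarchy and, at the end, invokes Theorem~\ref{thm:ClosedFibre} itself to close the argument. If you are trying to \emph{prove} Theorem~\ref{thm:ClosedFibre} by this route, that last move is circular, and you must instead show that the second branch cannot persist all the way down when $Y\Bslash G$ is not a product. That is a genuine piece of content---essentially the statement that a non-product taut sutured manifold has sutured Floer homology of rank at least $2$---and nothing in your outline supplies it.
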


The following proposition indicates that $HF^{\circ}(Y|G)$ is actually an invariant for the corresponding sutured manifolds.

\begin{prop}\label{prop:CutReglue}
Suppose $G\subset Y$ is a closed connected surface with $g(G)>1$, and $G\cap K=\emptyset$. Let $Y'$ be a manifold obtained from $Y$ by cutting $Y$ open along $G$ then gluing via a homeomorphism of $G$, then $K$ becomes a knot $K'$ in $Y'$. Then for any $i\le1-g(G)$ we have isomorphisms:
$${HF}^{\circ}(Y,[G],i)\cong{HF}^{\circ}(Y',[G],i),\quad {HFK}^{\circ}(Y,K,[G],i)\cong{HFK}^{\circ}(Y',K',[G],i).$$
\end{prop}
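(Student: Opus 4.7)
The plan is to adapt a Heegaard diagram to the surface $G$ so that the cut-and-reglue operation becomes a local modification which the bottommost Heegaard Floer chain complex cannot detect. This is in the spirit of the proof of \cite[Proposition~2.9]{NiFibred} and of the computations in \cite{NiClosedFib}, and the proposition here amounts to packaging those arguments in the stated form.

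First, I would construct a Heegaard diagram $(\Sigma,\alpha,\beta,z)$ for $Y$ that is adapted to $G$, meaning: $\Sigma$ contains a separating simple closed curve $c$ bounding a subsurface $P\subset\Sigma$ of genus $g(G)$ with one boundary component; there are $2g(G)$ pairs of $\alpha$- and $\beta$-curves lying entirely in $P$ in a standard ``dual'' configuration (each $\alpha_i\cap\beta_i$ a single transverse point); and $G$ is obtained by capping $P$ off with a disk in the appropriate handlebody. The basepoint $z$, the knot $K$, and the extra basepoint $w$ (for the knot version) can be placed in $\Sigma\setminus P$ since $K\cap G=\emptyset$. The homeomorphism of $G$ producing $Y'$ is realized as a change in the way $P$ is embedded back into $\Sigma$, leaving $\Sigma\setminus P$ and the attaching curves on it untouched.

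Second, a standard computation of the first Chern class on this configuration shows that for every generator $\mathbf{x}\in\mathbb{T}_\alpha\cap\mathbb{T}_\beta$ one has $\langle c_1(\mathfrak s_z(\mathbf{x})),[G]\rangle\ge 2(1-g(G))$, with equality precisely when each of the $2g(G)$ coordinates of $\mathbf{x}$ lying on $P$ occupies its unique designated intersection point. Thus in the bottom Spin$^c$ layer $i=1-g(G)$ the $P$-coordinates of any generator are completely pinned. Spin$^c$ structures with $i<1-g(G)$ contribute zero by the adjunction inequality, so for those the claimed isomorphism is trivial.

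Third, because the $P$-coordinates are pinned, any holomorphic Whitney disk contributing to the bottom differential must have zero multiplicity on the $P$ side of $c$, hence is supported entirely in $\Sigma\setminus P$. The same conclusion holds for the refined differential used for $\widehat{HFK}$ once the extra basepoint $w$ is added in $\Sigma\setminus P$. Since the cut-and-reglue only changes how $P$ is attached and leaves both $\Sigma\setminus P$ and the pinned intersection pattern unchanged, the chain complexes for $(Y,[G],i)$ and $(Y',[G],i)$ are canonically identified for $i\le 1-g(G)$, and passage to homology yields the two isomorphisms in the proposition. The main technical obstacle is verifying pinning together with the absence of disks crossing $c$; both are proved in detail in \cite{NiFibred} and \cite{NiClosedFib} for essentially the same local picture, so I would cite those computations rather than reprove them, and would only need to check that $K$ and $w$ can be kept on the $\Sigma\setminus P$ side (immediate from $K\cap G=\emptyset$) and that an arbitrary self-homeomorphism of $G$ can be realized by an admissible modification of the gluing of $P$ into $\Sigma$.
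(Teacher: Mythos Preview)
The paper does not give a detailed proof; it simply cites two standard routes---the surgery exact triangle (realize the regluing homeomorphism as a product of Dehn twists along curves in $G$ and use adjunction to kill the third term in each triangle) as in \cite[Proposition~3.5]{NiSuturedD}, or Floer-theoretic excision as in \cite{KMSuture}. Your proposal is a third, more hands-on route: a direct chain-level identification via a Heegaard diagram adapted to $G$. This is legitimate and has the advantage of producing an explicit chain isomorphism rather than a zig-zag through exact triangles, but two points deserve care if you pursue it. First, ``$2g(G)$ pairs'' should be $g(G)$ pairs; a genus-$g(G)$ subsurface $P$ carries at most $g(G)$ disjoint $\alpha$-curves. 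Second, and more substantively, the implication ``pinned $P$-coordinates $\Rightarrow$ zero multiplicity on $P$'' is not automatic: with a single intersection per pair, $P\setminus(\alpha\cup\beta)$ is a single region whose $\alpha$- and $\beta$-boundary contributions cancel homologically, so domains can in principle carry arbitrary multiplicity there without moving the $P$-coordinates. What actually forces the multiplicity to vanish is placing $z$ in the region abutting $c$ (so that $n_z=0$ pins the $P$-multiplicity), or the specific winding/admissibility configuration used in the references you cite; you should say explicitly which mechanism you are invoking. The surgery-triangle route the paper cites sidesteps all of this local analysis, at the cost of being less explicit and not giving a chain-level map.
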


The proof of this proposition is standard. It either follows from the surgery exact triangle as in Ni \cite[Proposition~3.5]{NiSuturedD} or a version of the excision formula as in Kronheimer--Mrowka \cite{KMSuture}.

\subsection{Applying the internal hierarchy}

As an immediate application of Theorem~\ref{thm:PreciseGabai}, we prove Theorem~\ref{thm:ThNorm} in the case $\chi_-(h)>0$.

\begin{prop}\label{prop:NormSummand}
Suppose that $Y$ is a closed oriented $3$--manifold, and that $G\subset Y$ is a taut surface with $\chi(G)<0$. Then $\widehat{HF}(Y|G)$ contains a $\mathbb Z\oplus\mathbb Z$ direct summand, and $HF^+(Y|G)$ contains a $\mathbb Z$ direct summand.
\end{prop}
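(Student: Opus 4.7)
The plan is to reduce to the model case $(F\times S^1, F)$ via Theorem~\ref{thm:PreciseGabai} and then invoke the known computation of Heegaard Floer homology of surface bundles over the circle.

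First assume $G$ is connected with $g:=g(G)\ge 2$; the general (possibly disconnected) case follows by standard arguments comparing $\mathcal B_Y([G])$ with the $\mathcal B_Y$ of the components (via Lemma~\ref{lem:h1h2}(1) and additivity of the Thurston norm on taut surfaces). Apply Theorem~\ref{thm:PreciseGabai} to obtain a chain
\[
(Y,G)\succ(Y_1,G_1)\succ\cdots\succ(Y_n,G_n)=(F\times S^1,F),
\]
in which every $G_i$ is connected and taut in $Y_i$, and $\mathcal B_{Y_{i+1}}([G_{i+1}])\subset\mathcal B_{Y_{i+1}}([G_i])$.

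For each step, $G_i$ is taut in both $Y_i$ and $Y_{i+1}$, so $\chi_-([G_i])=2g(G_i)-2$ in both, and the bottommost basic classes for $[G_i]$ sit at the level $k=1-g(G_i)$. Proposition~\ref{prop:CutReglue} then gives
\[
HF^{\circ}(Y_i,[G_i],k)\cong HF^{\circ}(Y_{i+1},[G_i],k),
\]
which, under the identification $HF^{\circ}(Y|G)=HF^{\circ}(Y,[G],1-g(G))$ (absorbing the vanishing Spin$^c$-summands, using that $HF^+=0$ forces $\widehat{HF}=0$), yields $HF^{\circ}(Y_i|G_i)\cong HF^{\circ}(Y_{i+1}|G_i)$. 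The containment $\mathcal B_{Y_{i+1}}([G_{i+1}])\subset\mathcal B_{Y_{i+1}}([G_i])$ exhibits $HF^{\circ}(Y_{i+1}|G_{i+1})$ as a direct summand of $HF^{\circ}(Y_{i+1}|G_i)$. Iterating, $HF^{\circ}(F\times S^1|F)$ appears as a direct summand of $HF^{\circ}(Y|G)$.

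It remains to compute the model. For $F$ of genus $g\ge 2$, the extremal Spin$^c$ structure $\mathfrak s_0$ on $F\times S^1$ with $\langle c_1(\mathfrak s_0),[F]\rangle=2-2g$ satisfies $HF^+(F\times S^1,\mathfrak s_0)\cong\mathbb Z$ and $\widehat{HF}(F\times S^1,\mathfrak s_0)\cong\mathbb Z\oplus\mathbb Z$ by the classical computation of Ozsv\'ath--Szab\'o; the Adjunction Inequality applied to the obvious vertical tori in $F\times S^1$ rules out all other Spin$^c$ structures in the level $\langle c_1,[F]\rangle=2-2g$. Hence $HF^+(F\times S^1|F)\cong\mathbb Z$ and $\widehat{HF}(F\times S^1|F)\cong\mathbb Z\oplus\mathbb Z$, finishing the argument. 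I expect the main obstacle to be the bookkeeping across the chain: maintaining the direct-summand relation at every cut-and-reglue step and cleanly justifying the reduction from a disconnected $G$ to the connected model; the $F\times S^1$ computation itself is classical.
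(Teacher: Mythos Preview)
Your connected-case argument is essentially the paper's: apply Theorem~\ref{thm:PreciseGabai}, use Proposition~\ref{prop:CutReglue} to identify $HF^{\circ}(Y_{i+1}|G_i)\cong HF^{\circ}(Y_i|G_i)$, then use the containment \eqref{eq:SpincSub} to exhibit $HF^{\circ}(Y_{i+1}|G_{i+1})$ as a summand, and iterate down to $F\times S^1$. One small omission: Theorem~\ref{thm:PreciseGabai} requires $Y$ irreducible, so you need the paper's preliminary reduction via the K\"unneth formula for connected sums.

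The real gap is the disconnected case, and it is exactly the worry you flagged. Lemma~\ref{lem:h1h2}(1) and additivity of $\chi_-$ on a taut $G=G_1\sqcup\cdots\sqcup G_k$ give $\mathcal B_Y([G])=\bigcap_i\mathcal B_Y([G_i])\subset\mathcal B_Y([G_j])$, so $HF^{\circ}(Y|G)$ is a direct summand \emph{of} $HF^{\circ}(Y|G_j)$ --- the inclusion runs the wrong way for your reduction. Knowing that $HF^{\circ}(Y|G_j)$ contains a $\mathbb Z$ (or $\mathbb Z\oplus\mathbb Z$) summand says nothing about the smaller piece $HF^{\circ}(Y|G)$. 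The paper does not reduce to a component; instead it reruns the hierarchy construction carrying the entire disconnected surface along. One fixes a component $E_0$ with $g(E_0)>1$, performs the cut-and-reglue along $E_0$, and at the same time lets the rest of $G$ ride through: the new taut surface $G_1$ is built by cut-and-paste of $\overline S$ with $m$ copies of \emph{all} of $G$, and Lemma~\ref{lem:h1h2}(3) (with $m$ large) is what guarantees $\mathcal B_{Y_1}([G_1])\subset\mathcal B_{Y_1}([G])$ for the full surface. Iterating decreases $C(Y,E_0)$ until a component of $G_n$ is a fiber of a fibration of $Y_n$, and then $G_n$ must contain parallel copies of that fiber, so the model computation applies. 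This extra bookkeeping is the actual content of the disconnected case and cannot be replaced by the componentwise comparison you propose.
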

\begin{proof}
Using the K\"unneth formula for connected sums if necessary, we can reduce our problem to the case that $Y$ is irreducible.

We first deal with the case when $G$ is connected.
By Theorem~\ref{thm:PreciseGabai}, there exits a sequence of pairs $$(Y,G)=(Y_0,G_0),(Y_1,G_1),\dots,(Y_n,G_n)=(G_n\times S^1,G_n),$$
such that $Y_{i+1}$ is obtained from $Y_i$ by cutting open $Y_i$ along $G_i$ and regluing via a homeomorphism of $G_i$, and
(\ref{eq:SpincSub}) holds.
By Proposition~\ref{prop:CutReglue}, $${HF}^{\circ}(Y_{i+1}|G_i)\cong{HF}^{\circ}(Y_i|G_i).$$
So ${HF}^{\circ}(Y_{i+1}|G_{i+1})$ is a direct summand of ${HF}^{\circ}(Y_i|G_i)$. As a consequence, ${HF}^{\circ}(G_n\times S^1|G_n)$ is a direct summand of $HF^{\circ}(Y|G)$.

The proof of the general case is sketched as follows. Suppose that $E_0$ is a component of $G=G_0$ with $g(E_0)>1$. Let $M_0=Y\Bslash E_0$. Suppose that $S\subset M_0$ is a taut surface satisfying the condition of Remark~\ref{rem:OneCurve}, we can isotope $S$ such that $|S\cap(G-E_0)|$ is as small as possible. Let $(Y_1,\overline S)$ be the pair obtained from $(M_0,S)$ by gluing $R_+(M_0)$ to $R_-(M_0)$ via a homeomorphism sending $\partial_+S$ to $\partial_-S$, then $HF^{\circ}(Y_1|G)\cong HF^{\circ}(Y|G)$.
Using Lemma~\ref{lem:h1h2}, $\mathcal B_{Y_1}(\overline{S}+m[G])\subset\mathcal B_{Y_1}([G])$ when $m$ is sufficiently large. Consider the surface obtained from ${S}$ and $m$ copies of $G-E_0$ by cut-and-pastes, let $S_1$ be the union of its non-closed components\footnote{
By Remark~\ref{rem:OneCurve}, $S$ has no closed components. So the only closed components come from the closed components of $G-E_0$ that do not intersect $S$.} 
and let
$\overline{S_1}$ be the corresponding closed surface in $Y_1$. Let $G_1$ be the surface obtained from $\overline S$ and $m$ copies of $G$ by cut-and-pastes, and let
 $E_1$ be the component of $G_1$ which is obtained from $\overline{S_1}$ and $m$ copies of $E_0$ by cut-and-pastes.
 
Using the argument in the proof of Theorem~\ref{thm:PreciseGabai}, we can get a triple $$(Y_{n_1},G_{n_1},E_{n_1}),$$ such that $G_{n_1}$ is taut, $HF^{\circ}(Y_{n_1}|G_{n_1})$ is a direct summand of $HF^{\circ}(Y|G)$, $E_{n_1}$ is a component of $G_{n_1}$ with genus $>1$, and $C(Y_{n_1},E_{n_1})<C(Y,E_0)$. So we can repeat this process until we get a triple $(Y_{n},G_{n},E_{n})$ such that $Y_n$ fibers over $S^1$ with fiber $E_n$. In this case $G_n$ must contain parallel copies of $E_n$. Now our conclusion holds.
\end{proof}

The argument above tells us slightly more than just the nontriviality of $HF^{\circ}(Y|G)$. Recall that for any $\zeta\in H_1(Y)/\mathrm{Tors}$ there is a map $A_{\zeta}\co HF^{\circ}(Y,\mathfrak s)\to HF^{\circ}(Y,\mathfrak s)$ satisfying $A^{2}_{\zeta}=0$ \cite{OSzAnn1}. Thus $A_{\zeta}$ can be regarded as a differential on $HF^{\circ}(Y,\mathfrak s)$. It is not hard to see that $A_{\zeta}$ respects the isomorphism in Proposition~\ref{prop:CutReglue}. (More precisely, let $\omega$ be a $1$--cycle representing $\zeta$. We can realize the cut-and-reglue process in Proposition~\ref{prop:CutReglue} by Dehn surgery on a link $L$ contained in $G$, and $\omega$ can be chosen to be disjoint from $L$. Hence $\omega$ corresponds to a $1$--cycle $\omega'$ in $Y'$. Then $A_{[\omega]}$ and $A_{[\omega']}$ coincide under the isomorphism in Proposition~\ref{prop:CutReglue}.) We also observe that any $A_{\zeta}$ map on $HF^{\circ}(G_n\times S^1|G_n)$ is zero. So the argument in Proposition~\ref{prop:NormSummand} implies the following theorem.

\begin{thm}\label{thm:NormAct}
Suppose that $Y$ is a closed oriented $3$--manifold, and that $G\subset Y$ is a taut surface with $\chi(G)<0$. Then for any $\zeta\in H_1(Y)/\mathrm{Tors}$, the homology $H_*(HF^{\circ}(Y|G),A_{\zeta})$ has rank greater than or equal to $1$ or $2$, according to whether $HF^{\circ}$ is $HF^+$ or $\widehat{HF}$. Moreover, the group
$$\bigcap_{\zeta\in H_1(Y)/\mathrm{Tors}}\ker \big(A_{\zeta}\co HF^{\circ}(Y|G)\to HF^{\circ}(Y|G)\big)$$
has rank greater than or equal to $1$ or $2$.
\end{thm}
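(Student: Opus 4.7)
The plan is to mirror the proof of Proposition~\ref{prop:NormSummand}, but tracking the homological action at each step. After the usual reductions (Künneth for connected sums, and the inductive component-by-component trick used at the end of Proposition~\ref{prop:NormSummand}), I may assume $Y$ is irreducible and $G$ is connected with $g(G) > 1$. Applying Theorem~\ref{thm:PreciseGabai} produces a sequence
\[
(Y,G)=(Y_0,G_0),\,(Y_1,G_1),\,\dots,\,(Y_n,G_n)=(G_n\times S^1,G_n),
\]
in which each $Y_{i+1}$ comes from cutting $Y_i$ along $G_i$ and regluing, and $\mathcal{B}_{Y_{i+1}}([G_{i+1}]) \subset \mathcal{B}_{Y_{i+1}}([G_i])$. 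As in the proof of Proposition~\ref{prop:NormSummand}, iterating Proposition~\ref{prop:CutReglue} exhibits $HF^\circ(G_n\times S^1 \,|\, G_n)$ as a direct summand of $HF^\circ(Y|G)$ — note this is a summand in the literal sense, since the inclusion at each stage is the inclusion of a sub-sum of $\mathrm{Spin}^c$-components.

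The new ingredient is to verify that this direct-summand inclusion commutes with $A_\zeta$. I would fix a $1$-cycle $\omega \subset Y$ representing $\zeta$, and realize the cut-and-reglue $Y_i \rightsquigarrow Y_{i+1}$ by Dehn surgery on a link $L_i \subset G_i$ (as in the parenthetical remark in the paragraph preceding the theorem). Because $L_i$ is a $1$-complex in a surface, a general-position argument lets me isotope $\omega$ off of $L_i$, and then perform this inductively so that (the images of) $\omega$ stay disjoint from every $L_i$, giving cycles $\omega_i \subset Y_i$. The standard functoriality of the homological action under surgery on a link disjoint from $\omega$ implies that $A_{[\omega_i]}$ corresponds to $A_{[\omega_{i+1}]}$ under the isomorphism $HF^\circ(Y_{i+1}|G_i) \cong HF^\circ(Y_i|G_i)$ of Proposition~\ref{prop:CutReglue}; meanwhile, since $A_\zeta$ respects the $\mathrm{Spin}^c$-decomposition, it respects the sub-sum inclusion $HF^\circ(Y_{i+1}|G_{i+1}) \hookrightarrow HF^\circ(Y_{i+1}|G_i)$.

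Next I need that $A_\zeta$ vanishes identically on $HF^\circ(G_n\times S^1 \,|\, G_n)$ for every $\zeta$. For $HF^+$ the summand is $\mathbb Z$, and any $\mathbb Z$-linear endomorphism with $A_\zeta^2=0$ is zero. For $\widehat{HF}$ the summand is $\mathbb Z\oplus\mathbb Z$, and the vanishing follows from the explicit description of the bottommost Floer homology of a product bundle (both generators live in a single Maslov grading in their respective $\mathrm{Spin}^c$-summands, while $A_\zeta$ has odd degree). Combining everything, the projection $HF^\circ(Y|G) \twoheadrightarrow HF^\circ(G_n\times S^1|G_n)$ commutes with $A_\zeta$, so the splitting places this $\mathbb Z$ (resp.\ $\mathbb Z^2$) inside $\ker A_\zeta$ and, being a retract, inside $\ker A_\zeta / \mathrm{im}\, A_\zeta$ as well. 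Since the same summand works for every $\zeta$ simultaneously, the bound on $\bigcap_\zeta \ker A_\zeta$ follows.

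The step I expect to require the most care is the commutativity of $A_\zeta$ with the cut-and-reglue isomorphism: one has to keep track, across each surgery on $L_i \subset G_i$, that a chosen perturbation of $\omega$ away from $L_i$ yields a $1$-cycle $\omega_{i+1} \subset Y_{i+1}$ whose induced action on the chain level matches that of $\omega_i$ on the identified $\mathrm{Spin}^c$-summands. This is essentially the content of the parenthetical comment the author makes before the statement, and should reduce to a standard Heegaard-diagram argument, but it is the one point where the usual ``Floer homology only depends on the cut sutured piece'' philosophy requires genuine checking rather than invocation.
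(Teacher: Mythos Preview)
Your proposal is correct and follows essentially the same approach as the paper: the paper's proof is just the paragraph preceding the theorem, which says to rerun the argument of Proposition~\ref{prop:NormSummand} while tracking $A_\zeta$, observes that $A_\zeta$ respects the cut-and-reglue isomorphism via exactly the Dehn-surgery-on-$L\subset G$ trick you describe, and then notes that $A_\zeta$ vanishes on $HF^{\circ}(G_n\times S^1|G_n)$. Your additional justification for this last vanishing (grading considerations for $\widehat{HF}$, $A_\zeta^2=0$ on $\mathbb Z$ for $HF^+$) and your observation that the same summand works for all $\zeta$ simultaneously are details the paper leaves implicit.
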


This approach can also be used to prove Theorem~\ref{thm:ClosedFibre}. For simplicity, we do not give the proof here. Instead, assuming Theorem~\ref{thm:ClosedFibre}, we will show how to use the internal hierarchy to refine the theorem in the sense of considering the homological action.
See also \cite{NiBorro} for the version for sutured Floer homology.

\begin{thm}
Suppose that $Y$ is a closed irreducible oriented $3$--manifold, and that $G\subset Y$ is a taut connected surface with $\chi(G)<0$. Assume that $Y$ does not fiber over $S^1$ with fiber $G$. Then for any $\zeta\in H_1(Y)/\mathrm{Tors}$, the homology $H_*(HF^+(Y|G),A_{\zeta})$ has rank greater than $1$. Moreover, the group
$$\bigcap_{\zeta\in H_1(Y)/\mathrm{Tors}}\ker \big(A_{\zeta}\co HF^+(Y|G)\to HF^+(Y|G)\big)$$
has rank greater than $1$.
\end{thm}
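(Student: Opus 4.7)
The plan is to adapt the proof of Theorem~\ref{thm:NormAct}, invoking Theorem~\ref{thm:ClosedFibre} at the right moment to upgrade the rank bound from $\ge 1$ to $>1$. I argue by induction on the complexity $C(Y,G)$.

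The base case $(Y,G)=(G\times S^1,G)$ is vacuous, since the hypothesis that $Y$ does not fiber with $G$ fails. For the inductive step, suppose $Y$ does not fiber with fiber $G$. By the proof of Theorem~\ref{thm:PreciseGabai} (specifically the iterative complexity reduction of Section~\ref{Sect:DecCx}), there is a pair $(Y_1,G_1)\prec(Y,G)$ with $C(Y_1,G_1)<C(Y,G)$. Proposition~\ref{prop:CutReglue}, combined with the $A_\zeta$-naturality remark following the proof of Proposition~\ref{prop:NormSummand}, yields an $A_\zeta$-compatible isomorphism $HF^+(Y_1|G_0)\cong HF^+(Y|G)$. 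Since $A_\zeta$ preserves the $\mathrm{Spin}^c$-splitting and $\mathcal{B}_{Y_1}([G_1])\subset\mathcal{B}_{Y_1}([G_0])$, this gives an $A_\zeta$-compatible direct sum of chain complexes
$$HF^+(Y|G)\cong HF^+(Y_1|G_1)\oplus W.$$

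If $Y_1$ does not fiber with fiber $G_1$, the inductive hypothesis applied to $(Y_1,G_1)$ yields the required rank bounds on $HF^+(Y_1|G_1)$, which pass immediately to the direct summand. Otherwise $Y_1$ fibers with $G_1$ as fiber, so $HF^+(Y_1|G_1)\cong\mathbb Z$ with $A_\zeta=0$ on it, contributing one class to $\bigcap_{\zeta}\ker A_{\zeta}$. Theorem~\ref{thm:ClosedFibre} applied to $(Y,G)$ forces $HF^+(Y|G)\not\cong\mathbb Z$, so $W\ne 0$. Each $\mathfrak s$ contributing to $W$ has $\langle c_1(\mathfrak s),[G_0]\rangle=-\chi_-([G_0])\ne 0$, so $\mathfrak s$ is non-torsion and $HF^+(Y_1,\mathfrak s)$ is finitely generated with a well-defined bottom grading. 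The bottom-graded generator of such a summand is killed by every $A_{\zeta}$ (which has degree $-1$), providing a second element in $\bigcap_{\zeta}\ker A_{\zeta}$.

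The main obstacle is the individual-$\zeta$ assertion: one must verify that this bottom-graded element represents a non-trivial class in $H_*(HF^+(Y|G),A_{\zeta})$ for every specific $\zeta$, i.e., that it is not a boundary. My approach is to iterate the hierarchy all the way down, writing $HF^+(Y|G)\cong\mathbb Z\oplus\bigoplus_i W_i$ as $A_\zeta$-compatible chain complexes, and to exploit the grading structure together with the $U$-action and the identity $A_\zeta^2=0$ to rule out the possibility that every bottom-graded class lies in $\mathrm{im}\,A_\zeta$. Modulo this technical verification, both statements of the theorem follow.
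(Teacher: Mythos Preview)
Your inductive framework is reasonable up to the terminal case, but the argument there has a genuine gap. When $Y_1$ fibers with fiber $G_1$ and you write $HF^+(Y|G)\cong\mathbb Z\oplus W$, you try to extract a second element of $\bigcap_\zeta\ker A_\zeta$ from a ``bottom-graded generator'' of a Spin$^c$ summand of $W$. But the Spin$^c$ structures contributing to $W$ are non-torsion, and for such $\mathfrak s$ the Maslov grading on $HF^+(Y_1,\mathfrak s)$ is only a relative $\mathbb Z/d(\mathfrak s)$--grading, not a $\mathbb Z$--grading; there is no bottom. So the fact that $A_\zeta$ has degree $-1$ does not force any particular element into the kernel. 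You already flag that the $H_*(\cdot,A_\zeta)$ statement remains unverified; in fact the $\bigcap_\zeta\ker A_\zeta$ statement is not established either, and the concluding ``modulo this technical verification'' is exactly the missing content.

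The paper sidesteps this obstacle by a different mechanism. At each step of the hierarchy one has the closed surface $\overline S\subset Y'$ with $[G']=[\overline S]+[G]$. The paper also considers the class $-[\overline S]+m[G]$ for $m\gg0$ and uses Lemma~\ref{lem:h1h2} to set up a dichotomy: either (i) $\mathcal B_{Y'}([G'])$ and $\mathcal B_{Y'}(-[\overline S]+m[G])$ are \emph{disjoint} nonempty subsets of $\mathcal B_{Y'}([G])$, in which case two separate applications of Theorem~\ref{thm:NormAct} produce two $\mathbb Z$--summands of $HF^+(Y'|G)$ sitting in different Spin$^c$ blocks, each annihilated by every $A_\zeta$ (so both conclusions follow at once, with no grading argument on $W$); or (ii) $\mathcal B_{Y'}([G'])=\mathcal B_{Y'}([G])$, so $HF^+(Y'|G')$ is literally equal to $HF^+(Y'|G)$, and one iterates. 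If case~(i) never occurs all the way down, then $HF^+(Y|G)\cong HF^+(G_n\times S^1|G_n)\cong\mathbb Z$ exactly, and Theorem~\ref{thm:ClosedFibre} forces $Y$ to fiber, contradicting the hypothesis. The key idea you are missing is the use of the ``opposite'' class $-[\overline S]+m[G]$ to manufacture a second, Spin$^c$--disjoint copy of the $\mathbb Z$ summand guaranteed by Theorem~\ref{thm:NormAct}.
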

\begin{proof}
Let $S\subset M$ be as in Remark~\ref{rem:OneCurve}, let $Y',\overline S,G'$ be as in Construction~\ref{constr:Reglue}. We only need to prove the theorem for $HF^+(Y'|G)$. By Theorem~\ref{thm:PreciseGabai}, we can choose $S$ so that $$\mathcal B_{Y'}([\overline S]+[G])=\mathcal B_{Y'}([G'])\subset \mathcal B_{Y'}([G]).$$ By Lemma~\ref{lem:h1h2}, $$\mathcal B_{Y'}(-[\overline S]+m[G])\subset \mathcal B_{Y'}([G])$$
when $m$ is sufficiently large. 

If 
$$\chi_-([\overline S]+[G])+\chi_-(-[\overline S]+m[G])>\chi_-((m+1)[G]),$$
then by Lemma~\ref{lem:h1h2} $$\mathcal B_{Y'}([G'])\cap\mathcal B_{Y'}(-[\overline S]+m[G])\cap \mathcal B_{Y'}((m+1)[G])=\emptyset.$$
Since $$\mathcal B_{Y'}(-[\overline S]+m[G])\subset \mathcal B_{Y'}([G])=\mathcal B_{Y'}((m+1)[G]),$$ we have
$$\mathcal B_{Y'}([G'])\cap\mathcal B_{Y'}(-[\overline S]+m[G])=\emptyset.$$
Applying Theorem~\ref{thm:NormAct} to $G'$ and a taut surface representing $-[\overline S]+m[G]$, we get our conclusion. 

If $$\chi_-([\overline S]+[G])+\chi_-(-[\overline S]+m[G])=\chi_-((m+1)[G]),$$
then by Lemma~\ref{lem:h1h2} $$\mathcal B_{Y'}([G'])\cap\mathcal B_{Y'}(-[\overline S]+m[G])=\mathcal B_{Y'}((m+1)[G])=\mathcal B_{Y'}([G]).$$
It follows that $\mathcal B_{Y'}([G'])=\mathcal B_{Y'}([G])$. Thus $HF^+(Y'|G')\cong HF^+(Y'|G)$. 

Applying Theorem~\ref{thm:PreciseGabai}, either our conclusion holds, or we get a pair $(G_n\times S^1,G_n)\prec(Y,G)$ with $$HF^+(G_n\times S^1|G_n)\cong HF^+(Y|G).$$ By Theorem~\ref{thm:ClosedFibre}, $Y$ fibers over $S^1$ with fiber $G$, a contradiction.
\end{proof}


\section{Floer simple knots}\label{Sect:FSimple}

In this section, we will prove Theorem~\ref{thm:Nullhomotopic}.
If $K$ is contained in a $3$--ball then the desired result holds by \cite{OSzKnot}. From now on, we assume $K$ is not contained in a ball, then $X=Y\Bslash K$ is irreducible.

\begin{defn}
Suppose $K$ is a rationally null-homologous knot in $Y$, $h\in H_2(Y)$ is a homology class. We say $K$ is {\it bottommostly Floer simple relative to $h$} if 
$$\mathrm{rank}\bigoplus_{\langle c_1(\mathfrak s),h\rangle\le -\chi_-(h)}\widehat{HFK}(Y,K,\mathfrak s)=\mathrm{rank}\bigoplus_{\langle c_1(\mathfrak s),h\rangle\le -\chi_-(h)}\widehat{HF}(Y,\mathfrak s).$$ In other words, $K$ is bottommostly Floer simple relative to $h$ if
$$\mathrm{rank}\widehat{HFK}(Y,K,\mathfrak s)=\mathrm{rank}\widehat{HF}(Y,\mathfrak s)$$
for any $\mathfrak s\in\spinc(Y)$ with $\langle c_1(\mathfrak s),h\rangle\le-\chi_-(h)$.
\end{defn}

\begin{lem}\label{lem:DisjointTaut}
Suppose that $K\subset Y$ is a rationally null-homologous knot, and that $K$  is bottommostly Floer simple relative to $h\in H_2(Y)$. Then the Thurston norm of $h$ in $Y$ is equal to its Thurston norm in $X$. 
\end{lem}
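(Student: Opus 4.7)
The plan is to prove the two inequalities $\chi_-(h)\le\chi_-(j_*(h))$ and $\chi_-(j_*(h))\le\chi_-(h)$ separately; I will write $\Theta_Y=\chi_-(h)$ (Thurston norm in $Y$) and $\Theta_X=\chi_-(j_*(h))$ (Thurston norm in $X$) throughout.

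For $\Theta_Y\le\Theta_X$, I would argue topologically by capping off. First I would take an incompressible, $\chi_-$-minimizing properly embedded surface $F\subset X$ representing $j_*(h)\in H_2(X,\partial X)$. Since $K$ is rationally null-homologous, $[K]$ is torsion in $H_1(Y)$; a Mayer--Vietoris computation for $Y=X\cup\nu(K)$ then shows that the image of $j_*(h)$ under $\partial\co H_2(X,\partial X)\to H_1(\partial X)$ vanishes, so $[\partial F]=0$ in $H_1(T^2)$. Incompressibility forces each component of $\partial F$ to be essential on $\partial X$, and the vanishing total class arranges them into matched pairs of parallel essential curves with opposite orientations. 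Capping each such pair off by an annulus pushed slightly into $\nu(K)$ yields a closed surface $\tilde F\subset Y$ with $[\tilde F]=h$, $\chi(\tilde F)=\chi(F)$, and $\chi_-(\tilde F)\le\chi_-(F)$ (since annulus-gluing can only merge components in a $\chi_-$-nonincreasing way). Hence $\Theta_Y\le\chi_-(\tilde F)\le\Theta_X$.

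For the reverse inequality $\Theta_X\le\Theta_Y$, I would argue by contradiction. Suppose $\Theta_X>\Theta_Y$. By Proposition~\ref{prop:ThNormK}, there exists $\xi\in\relspin(Y,K)$ with $\widehat{HFK}(Y,K,\xi)\ne 0$ and $\langle c_1(\xi),j_*(h)\rangle=-\Theta_X$. Let $\mathfrak s\in\spinc(Y)$ be the image of $\xi$ under the natural projection $\relspin(Y,K)\to\spinc(Y)$; by the naturality identity $\langle c_1(\mathfrak s),h\rangle=\langle c_1(\xi),j_*(h)\rangle$, we get $\langle c_1(\mathfrak s),h\rangle=-\Theta_X<-\Theta_Y$. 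The Adjunction Inequality in $Y$ then forces $\widehat{HF}(Y,\mathfrak s)=0$, while bottommost Floer simplicity of $K$ relative to $h$ (applicable because $\langle c_1(\mathfrak s),h\rangle\le -\Theta_Y$) gives $\mathrm{rank}\,\widehat{HFK}(Y,K,\mathfrak s)=\mathrm{rank}\,\widehat{HF}(Y,\mathfrak s)=0$. This contradicts the fact that $\widehat{HFK}(Y,K,\xi)$ is a nonzero summand of $\widehat{HFK}(Y,K,\mathfrak s)$, proving $\Theta_X\le\Theta_Y$.

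The hard part will be the topological first half: one must verify that the capping-off construction neither raises $\chi_-$ nor perturbs the homology class in $H_2(Y)$, both of which are ultimately controlled by the vanishing of $[\partial F]$ enforced by the rationally null-homologous hypothesis. The Floer-theoretic half is then a direct unpacking of the definition of bottommost Floer simplicity, combined with the existence result Proposition~\ref{prop:ThNormK} and the naturality of $\pi\co\relspin(Y,K)\to\spinc(Y)$.
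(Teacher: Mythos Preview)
Your proof is correct. The Floer-theoretic half ($\Theta_X\le\Theta_Y$) is exactly the paper's use of Proposition~\ref{prop:ThNormK} together with the termwise formulation of bottommost simplicity. For the other inequality ($\Theta_Y\le\Theta_X$) you give an elementary topological capping argument, whereas the paper's terse proof cites Theorem~\ref{thm:ThNorm} instead: from a $\mathfrak s$ with $\widehat{HF}(Y,\mathfrak s)\ne0$ at level $-\Theta_Y$, the general inequality $\mathrm{rank}\,\widehat{HFK}(Y,K,\mathfrak s)\ge\mathrm{rank}\,\widehat{HF}(Y,\mathfrak s)$ produces a knot Floer basic class at that level, and the knot Floer adjunction inequality then gives $-\Theta_Y\ge-\Theta_X$. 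Your route is more elementary---it avoids the deep nonvanishing Theorem~\ref{thm:ThNorm}---and makes transparent that $\Theta_Y\le\Theta_X$ in fact holds for \emph{any} rationally null-homologous $K$, independent of the Floer-simple hypothesis; the paper's route keeps the whole argument uniformly inside Floer theory.

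One small quibble: the assertion that incompressibility of $F$ alone forces each component of $\partial F$ to be essential on $\partial X$ is not quite right as stated. What you want is the standard fact that a norm-minimizing representative can be \emph{chosen} to have this property: if some boundary circle of $F$ is inessential on the torus, take an innermost one and cap it off with a disk pushed into $X$ (using incompressibility to clear interior intersections), which does not increase $\chi_-$. Alternatively, just cap any inessential boundary circles with disks in $\nu(K)$ first; this only decreases $\chi_-$, and your annulus pairing then handles the remaining essential components.
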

\begin{proof}
As $K$ is rationally null-homologous,  $$\mathrm{rank}\:\widehat{HFK}(Y,K,\mathfrak s)\ge\mathrm{rank}\:\widehat{HF}(Y,\mathfrak s)$$
for any $\mathfrak s\in\spinc(Y)$. The assumption that $K$  is bottommostly simple relative to $h$ then implies that the above equality holds for any $\mathfrak s$ with $\langle c_1(\mathfrak s),h\rangle\le-\chi_-(h)$.
Using Theorem~\ref{thm:ThNorm} and Proposition~\ref{prop:ThNormK}, we get our conclusion.
\end{proof}

\begin{prop}\label{prop:DecrCWKnot}
Suppose that $G$ is a connected taut surface in $Y$ with $g(G)>1$, and that $Y$ does not fiber over $S^1$ with fiber $G$. Suppose that $K\subset Y$ is a null-homotopic knot which is not contained in a $3$--ball, $K\cap G=\emptyset$ and $K$ is bottommostly simple relative to $[G]$. Then we can find a sequence of triples 
$$(Y,K,G)=(Y_0,K_0,G_0), (Y_1,K_1,G_1), \dots, (Y_n,K_n,G_n),$$
such that the following conditions hold:

(a) $K_i\cap G_i=\emptyset$, and $(Y_{i+1},K_{i+1})$ is obtained from $(Y_i,K_i)$ by cutting open $Y_i$ along $G_i$ and regluing by a homeomorphism of $G_i$;

(b) $G_i$ is connected and taut in both $Y_i$ and $Y_i\Bslash K_i$;

(c) $K_i$ is bottommostly simple relative to $[G_i]$;

(d) $C(Y_n,G_n)<C(Y,G)$.
\end{prop}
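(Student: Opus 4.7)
The strategy is to imitate the proof of Theorem~\ref{thm:PreciseGabai} verbatim, but carried out inside the knot complement $X = Y \Bslash \nu(K)$ rather than inside $Y$, so that every decomposing surface we produce is automatically disjoint from $K$. Because $K$ is null-homotopic, $[K] = 0 \in H_1(Y)$, so the natural map $H_2(X, \partial X) \to H_2(Y)$ carries $[G]$ to $[G]$ and is actually an isomorphism after quotienting by $[\partial \nu(K)]$. Moreover, Lemma~\ref{lem:DisjointTaut} applied to the hypothesis that $K$ is bottommostly Floer simple relative to $[G]$ implies $\chi_{-}^{Y}([G]) = \chi_{-}^{X}([G])$, so $G$ is taut in $X$ as well as in $Y$.

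I would then run the proof of Theorem~\ref{thm:PreciseGabai} on the sutured manifold $(X \Bslash G, \gamma)$ instead of $(Y \Bslash G, \gamma)$. Concretely, Proposition~\ref{prop:HomoC} applied to the compact $3$--manifold $X \Bslash G$ produces primitive homology classes $c_{\pm}\subset H_1(G_{\pm})$ that are rationally homologous in $X \Bslash G$; Theorem~\ref{thm:ExistTaut} then gives a taut surface $S \subset X \Bslash G$ meeting $G_{\pm}$ in parallel copies of $c_{\pm}$; and Construction~\ref{constr:Reglue} produces a pair $(X_1, \overline S)$ together with a new connected taut surface $G_1 \subset X_1$. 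Filling in $\nu(K)$ by the identity gives $(Y_1, K_1)$ with $K_1 \cap G_1 = \emptyset$. Depending on which of Lemmas~\ref{lem:OneNN}, \ref{lem:GluePDisk}, \ref{lem:NS+SN}, \ref{lem:NoNN+NS} is triggered (case split on the types of product annuli present in $X \Bslash G$), one iteration of this procedure may require several intermediate cut-and-reglues; these intermediate triples form the sequence $(Y_i, K_i, G_i)$. Conditions (a), (b), (d) follow exactly as in the proof of Theorem~\ref{thm:PreciseGabai}, applied throughout in $X$ rather than $Y$, using Lemma~\ref{lem:DisjointTaut} at each step to ensure that tautness in $X_i$ propagates to tautness in $Y_i$.

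The substantive point is condition (c). Here I would argue by induction using Proposition~\ref{prop:CutReglue} together with the bottommost containment $\mathcal B_{Y_{i+1}}([G_{i+1}]) \subset \mathcal B_{Y_{i+1}}([G_i])$ that is built into the construction via Lemma~\ref{lem:ExSpinc} and Lemma~\ref{lem:h1h2}. Since $g(G_i) \ge g(G) > 1$ for all $i$ (the cut-and-paste construction never decreases genus), Proposition~\ref{prop:CutReglue} gives, for every $j \le 1 - g(G_i)$, compatible isomorphisms
\[
\widehat{HF}(Y_i, [G_i], j) \cong \widehat{HF}(Y_{i+1}, [G_i], j), \qquad \widehat{HFK}(Y_i, K_i, [G_i], j) \cong \widehat{HFK}(Y_{i+1}, K_{i+1}, [G_i], j).
\]
Combining this with Proposition~\ref{prop:ExtremeSpinc} (which identifies the bottommost $\spinc$ structures on $(Y, K)$ via the Thurston norm, using that $\chi_-^Y = \chi_-^X$ by Lemma~\ref{lem:DisjointTaut}) shows that the rank equality defining bottommost Floer simplicity is preserved by each cut-and-reglue, and then refined from the $[G_i]$-bottommost to the $[G_{i+1}]$-bottommost level using the inclusion of basic class sets.

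The main obstacle will be handling the transitions carefully: at each intermediate step (particularly those inside the proofs of Lemmas~\ref{lem:OneNN} and \ref{lem:GluePDisk}, where Construction~\ref{constr:CPtorus} is invoked and $G_i$ is replaced by a cover of itself along a non-separating curve), we must simultaneously verify that $K_i$ remains null-homotopic enough for Lemma~\ref{lem:DisjointTaut} to apply, that the surface $S$ used in Construction~\ref{constr:Reglue} can genuinely be chosen in $X_i \Bslash G_i$ (not just in $Y_i \Bslash G_i$), and that bottommost Floer simplicity is not lost when we enlarge $G_i$ by cut-and-paste with parallel copies. The first two issues follow from the null-homotopy of $K$ together with the rational homology calculations in Section~\ref{Sect:FindC}; the third follows from the Lemma~\ref{lem:h1h2}-style containments of bottommost basic class sets.
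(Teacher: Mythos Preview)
Your overall strategy---work in the knot complement, choose each decomposing surface $S$ taut there, and propagate bottommost Floer simplicity through the cut-and-reglue via Proposition~\ref{prop:CutReglue}---is exactly the paper's. The paper likewise omits the step-by-step rerun of Lemmas~\ref{lem:OneNN}--\ref{lem:NoNN+NS} and says the proof ``proceeds exactly as in the proof of Theorem~\ref{thm:PreciseGabai}.'' But there is a circularity in your handling of (b) and (c) that the paper resolves with an extra lemma you do not invoke.

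You deduce tautness of $G_{i+1}$ in $Y_{i+1}$ from Lemma~\ref{lem:DisjointTaut}, whose hypothesis is that $K_{i+1}$ is bottommostly simple relative to $[G_{i+1}]$, i.e.\ condition~(c). For (c) you appeal to the containment $\mathcal B_{Y_{i+1}}([G_{i+1}])\subset\mathcal B_{Y_{i+1}}([G_i])$ via Lemma~\ref{lem:ExSpinc}. But Lemma~\ref{lem:ExSpinc} goes through Lemma~\ref{lem:h1h2}~1), which needs the Thurston-norm additivity $\chi_-^{Y_{i+1}}([\overline S]+[G_i])=\chi_-^{Y_{i+1}}([\overline S])+\chi_-^{Y_{i+1}}([G_i])$; that is exactly the statement that $G_{i+1}$ is taut in $Y_{i+1}$. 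Since $S$ was only chosen taut in $(E,\rho)=(M\Bslash K,\partial\nu(K))$, not in $M$, you only know $G_{i+1}$ is taut in $Y_{i+1}\Bslash K_{i+1}$, so the loop does not close.

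The paper breaks this by replacing $G'$ with $G^{(m)}$ for large $m$ and proving a purely arithmetic inclusion (Lemma~\ref{lem:LowerSub}): for any finite $\mathcal B\subset H^2(Y')$ and $m\gg0$,
\[
\bigl\{\alpha\in\mathcal B:\langle\alpha,[G^{(m)}]\rangle\le\chi(G^{(m)})\bigr\}\ \subset\ \bigl\{\alpha\in\mathcal B:\langle\alpha,[G]\rangle\le\chi(G)\bigr\}.
\]
Note the use of $\chi$, not $-\chi_-$: no knowledge of the Thurston norm of $[G^{(m)}]$ in $Y'$ is needed. Applied with $\mathcal B=\mathcal B_{(Y',K')}$ and combined with bottommost simplicity relative to $[G]$ (already established via Proposition~\ref{prop:CutReglue}), this gives the rank equality on $\{\langle c_1(\mathfrak s),[G^{(m)}]\rangle\le\chi(G^{(m)})\}$. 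Then Proposition~\ref{prop:ThNormK}, using only that $G^{(m)}$ is taut in $Y'\Bslash K'$, produces a Spin$^c$ structure at level $\chi(G^{(m)})$ with nonzero $\widehat{HFK}$, hence nonzero $\widehat{HF}$; this forces $G^{(m)}$ taut in $Y'$ \emph{a posteriori}, and (b), (c) follow together. Your ``Lemma~\ref{lem:h1h2}-style containments'' gesture at this, but Lemma~\ref{lem:h1h2} itself is phrased in terms of $\chi_-$ in $Y'$ and does not suffice.

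A smaller point: Proposition~\ref{prop:HomoC} as stated assumes $\partial M$ has exactly the two components $G_\pm$, so it does not literally apply to $X\Bslash G$. The paper applies it to $M=Y\Bslash G$, uses null-homotopy of $K$ in $M$ (from incompressibility of $G$) to tube $S$ off $K$, and then takes $S$ taut in $(E,\rho)$.
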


We modify the proof of Theorem~\ref{thm:PreciseGabai}.

As $G$ is incompressible, $K$ is null-homotopic in $M=Y\Bslash G$. So if $S\subset M$ is a properly embedded surface, we can always add tubes to $S$ to get a surface $S'\subset M$ with $\partial S'=\partial S$ and $S'\cap K=\emptyset$.

Let $E=M\Bslash K$, $\rho=\partial\nu(K)$, then $(E,\rho)$ is naturally a sutured manifold. We will apply Construction~\ref{constr:Reglue} repeatedly, but with the difference that $S$ is chosen to be a taut surface in $(E,\rho)$ with $S\cap\rho=\emptyset$. Recall from Construction~\ref{constr:Reglue} that $(Y',\overline S)$ is obtained from $(M,S)$ by gluing $G_+$ to $G_-$. Let $K'$ be the new knot in $Y'$, then $K'$ is null-homotopic in $Y'$. Let $G^{(m)}$ be the surface obtained from $\overline S$ and $m$ copies of $G$ by 
cut-and-pastes, then $G^{(m)}$ is taut in $Y'\Bslash K'$ since $S$ is taut in $(E,\rho)$.

\begin{lem}\label{lem:LowerSub}
Let $\mathcal B$ be a finite subset of $H^2(Y')$. When $m$ is sufficiently large, we have 
$$\big\{\alpha\in\mathcal B|\:\langle \alpha,[G^{(m)}]\rangle\le\chi(G^{(m)})\big\}\subset \big\{\alpha\in\mathcal B\big|\:\langle \alpha,[G]\rangle\le\chi(G)\big\}.$$
\end{lem}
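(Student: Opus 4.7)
The plan is to expand both sides linearly in $m$. Since $G^{(m)}$ is built from $\overline S$ and $m$ parallel copies of $G$ via successive cut-and-pastes along $1$-manifolds, both the homology class and the Euler characteristic should be additive:
$$[G^{(m)}]=[\overline S]+m[G]\in H_2(Y'),\qquad \chi(G^{(m)})=\chi(\overline S)+m\chi(G).$$
Using these two identities, I would rewrite the condition $\langle\alpha,[G^{(m)}]\rangle\le\chi(G^{(m)})$ in the equivalent form
$$m\bigl(\langle\alpha,[G]\rangle-\chi(G)\bigr)\le \chi(\overline S)-\langle\alpha,[\overline S]\rangle.$$

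Now I would argue by contrapositive. Fix any $\alpha\in\mathcal B$ with $\langle\alpha,[G]\rangle>\chi(G)$. Then the left-hand side of the displayed inequality tends to $+\infty$ with $m$, while the right-hand side is a fixed integer depending only on $\alpha$ and $\overline S$. Hence there is a threshold $m(\alpha)$ past which the inequality fails, and such an $\alpha$ cannot belong to the left-hand set of the lemma. Taking the maximum of the finitely many thresholds $m(\alpha)$ as $\alpha$ ranges over the (finite) bad subset of $\mathcal B$ produces a uniform $m_0$ that works for all of $\mathcal B$ simultaneously; for $m\ge m_0$ the asserted inclusion holds.

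I do not foresee a real obstacle. The only point requiring a line or two of justification is the additivity of $\chi$ under cut-and-paste: one excises a neighborhood of the intersection curves from $\overline S$ and from each copy of $G$ (removing a set of Euler characteristic zero) and reglues by annular patches (also of Euler characteristic zero), so that $\chi(G^{(m)})=\chi(\overline S)+m\chi(G)$ as claimed; the analogous statement for the homology class is immediate from the definition of cut-and-paste on oriented surfaces.
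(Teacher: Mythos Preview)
Your argument is correct and is essentially the paper's own proof: both expand $[G^{(m)}]=[\overline S]+m[G]$ and $\chi(G^{(m)})=\chi(\overline S)+m\chi(G)$, argue by contrapositive that $\langle\alpha,[G]\rangle>\chi(G)$ forces the inequality to fail for large $m$, and invoke finiteness of $\mathcal B$ for uniformity. The only cosmetic difference is that the paper fixes the explicit threshold $m>\max_{\alpha\in\mathcal B}|\langle\alpha,[\overline S]\rangle-\chi(\overline S)|$ up front and then derives a contradiction via a short chain of inequalities, whereas you isolate the linear term and phrase it as ``LHS $\to+\infty$''.
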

\begin{proof}
Let $m$ be an integer greater than $|\langle \alpha,[\overline S]\rangle-\chi(\overline S)|$ for all $\alpha\in\mathcal B$. If $\alpha\in\mathcal B$ satisfies that
$$\langle \alpha,[G^{(m)}]\rangle\le\chi(G^{(m)})\quad\text{and}\quad\langle \alpha,[G]\rangle>\chi(G),$$
then we have
\begin{eqnarray*}
\langle\alpha,[\overline S]\rangle+m\chi(G)
&\le&
\langle \alpha,[\overline S]\rangle+m(\langle \alpha,[G]\rangle-1)\\
&=&\langle \alpha,[G^{(m)}]\rangle-m\\
&\le&\chi(G^{(m)})-m\\
&=&\chi(\overline S)+m\chi(G)-m,
\end{eqnarray*}
which contradicts the choice of $m$.
\end{proof}

Using Proposition~\ref{prop:CutReglue}, $K'$ is also bottommostly Floer simple relative to $[G]$ in $Y'$. 
Applying Lemma~\ref{lem:LowerSub} to $\mathcal B=\mathcal B_{(Y',K')}$, we conclude that 
\begin{equation}\label{eq:Y'Simple}
\begin{array}{c}
\mathrm{rank}\widehat{HFK}(Y',K',\mathfrak s)=\mathrm{rank}\widehat{HF}(Y',\mathfrak s),\\
\text{for any } \mathfrak s\in\spinc(Y') \text{ with }\langle c_1(\mathfrak s),[G^{(m)}]\rangle\le\chi(G^{(m)}).
\end{array}
\end{equation}
Since $G^{(m)}$ is taut in $Y'\Bslash K'$, by Proposition~\ref{prop:ThNormK} $\mathrm{rank}\widehat{HFK}(Y',K',\mathfrak s)\ne0$ for some $\mathfrak s$ with $\langle c_1(\mathfrak s),[G^{(m)}]\rangle=\chi(G^{(m)})$. So 
$\mathrm{rank}\widehat{HF}(Y',\mathfrak s)\ne0$, which implies that $G^{(m)}$ is taut in $Y'$.
Now (\ref{eq:Y'Simple}) means that $K'$ is also bottommostly simple relative to $[G^{(m)}]$ in $Y'$, when $m$ sufficiently large.

Let $S^{(m-1)}\subset M$ be the surface obtained from $S$ and $(m-1)$ copies of $G_+$ by cut-and-pastes. We can replace $S$ with $S^{(m-1)}$, thus replace $G'$ with $G^{(m)}$. As we have showed, $G^{(m)}$ is taut in $Y'$, hence $S^{(m-1)}$ is taut in $M$. 

With the above understanding, the proof of Proposition~\ref{prop:DecrCWKnot} proceeds exactly as in the proof of Theorem~\ref{thm:PreciseGabai}. We omit the detail and leave it to the reader.

\begin{proof}[Proof of Theorem~\ref{thm:Nullhomotopic}]
Assume $K$ is nontrivial, then $K$ is not contained in a $3$--ball and $Y\Bslash K$ is irreducible. Let $h\in H_2(Y)$ be a homology class with $\chi_-(h)\ne0$. Using Lemma~\ref{lem:DisjointTaut}, we can find a taut surface $G$ representing $h$ such that $K$ is disjoint from $G$. We may assume $G$ is connected, otherwise we work with a nontorus component of $G$ instead.

Applying Proposition~\ref{prop:DecrCWKnot} repeatedly, we get a sequence of triples
$$(Y,K,G)=(Y_0,K_0,G_0), (Y_1,K_1,G_1), \dots, (Y_n,K_n,G_n),$$
such that (a), (b), (c) hold and $(Y_n,G_n)=(G_n\times S^1,G_n)$. Since $Y_n\Bslash K_n$ is obtained from the irreducible manifold $Y\Bslash K$ by cutting and regluing along incompressible surfaces, $Y_n\Bslash K_n$ is also irreducible, so $K_n$ is nontrivial. We get a contradiction by using Theorem~\ref{thm:BundBottom} from the following subsection.
\end{proof}

\subsection{Rationally null-homologous knots in surface bundles over $S^1$}

Suppose that $Y$ is a surface bundle over $S^1$, with fiber $G$ a closed oriented surface of genus $>1$.  Let $\mathfrak s_{\min}$ be the Spin$^c$ structure over $Y$ such that $\langle c_1(\mathfrak s_{\min}),[G]\rangle=2-2g(G)$, and $HF^+(Y,\mathfrak s_{\min})\cong \mathbb Z$. 
Suppose $K\subset Y$ is a rationally null-homologous knot which is bottommostly Floer simple relative to $[G]$, then $\widehat{HFK}(Y,K,\mathfrak s_{\min})\cong\mathbb Z^2$. 

Since $Y$ fibers over $S^1$, any taut surface in the fiber class $[G]$ must be isotopic to $G$ in $Y$. So Lemma~\ref{lem:DisjointTaut} implies that $K$ can be isotoped to be disjoint from $G$. Cutting $Y$ open along $G$ and regluing via a suitable homeomorphism, we get a manifold $Y_1$ with $H_1(Y_1)\cong\mathbb Z$. Since $K$ is disjoint from $G$, $K$ is null-homologous in $Y_1$. Working with $Y_1$ instead of $Y$ if necessary, we may assume that $K$ is null-homologous in $Y$.

Consider the chain complex $\big(C=CFK^{\infty}(Y,K,\mathfrak s_{\min}),\partial^{\infty}\big)$. There is a map on $C$:
$$U[\mathbf x,i,j]=[\mathbf x,i-1,j-1].$$
The number $i+j$ gives a filtration on $C$. Let $\partial_0$ be the component of $\partial^{\infty}$ which preserves the filtration, then $\partial^{\infty}=\partial_0+\partial_{>0}$. 

Let $\widetilde H=H_*(C,\partial_0)$, then $$\widetilde H(i=0)\cong\widehat{HFK}(Y,K,\mathfrak s_{\min})\cong \mathbb Z^2$$ is generated by two elements $\tensor x,\tensor y$ with different absolute $\mathbb Z/2\mathbb Z$ gradings. Moreover, as a free $\mathbb Z[U,U^{-1}]$--module, $\widetilde H$ is generated by $\tensor x,\tensor y$.

The map $\partial_{>0}$ induces a differential on $\widetilde H$, denoted $\widetilde{\partial}_{>0}$. The homology of $\widetilde H$ with respect to $\widetilde{\partial}_{>0}$ is isomorphic to $HFK^{\infty}(Y,K,\mathfrak s_{\min})$. Since $\tensor x,\tensor y$ have different absolute $\mathbb Z/2\mathbb Z$ gradings, we must have 
$$\widetilde{\partial}_{>0}\tensor x=f(U)\tensor y,\quad \widetilde{\partial}_{>0}\tensor y=g(U)\tensor x$$
for some polynomials $f(U),g(U)\in\mathbb Z[U]$. Since $\widetilde{\partial}_{>0}^2=0$, one of $f,g$ must be zero. Without loss of generality, we may assume $g(U)=0$. 

As $\widehat{HF}(Y,\mathfrak s_{\min})=\mathbb Z^2$, we have $f(0)=0$.
As $HF^+(Y,\mathfrak s_{\min})\cong H_*(\widetilde H(i\ge0),\widetilde{\partial}_{>0})\cong\mathbb Z$, we have 
\begin{equation}\label{eq:U}
f(U)=\pm U+\text{higher order terms}.
\end{equation}

\begin{lem}\label{lem:UniqMin}
The group $\widehat{HFK}(Y,K,\mathfrak s_{\min})$ is supported in a unique relative Spin$^c$ structure.
\end{lem}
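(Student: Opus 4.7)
The aim is to show $j_x=j_y$, where $\tensor x,\tensor y$ are the two generators of $\widehat{HFK}(Y,K,\mathfrak s_{\min})\cong\mathbb Z^2$ at bigradings $(0,j_x)$ and $(0,j_y)$ respectively; since at $i=0$ the relative $\spinc$ structure refining $\mathfrak s_{\min}$ is determined entirely by the Alexander grading $j$, this is equivalent to the lemma.

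The first step is to pin down $f(U)$ exactly. Because $\langle c_1(\mathfrak s_{\min}),[G]\rangle=2-2g\ne 0$, the class $\mathfrak s_{\min}$ is non-torsion; combined with $b_1(Y)\ge 1$, the untwisted group $HF^{\infty}(Y,\mathfrak s_{\min};\mathbb Z)$ vanishes. Since $K$ is null-homologous, $CFK^{\infty}(Y,K,\mathfrak s_{\min})$ and $CF^{\infty}(Y,\mathfrak s_{\min})$ share the same total homology, so $HFK^{\infty}(Y,K,\mathfrak s_{\min})=0$. On the other hand, computing the homology of $(\widetilde H,\widetilde\partial_{>0})$ directly gives $HFK^{\infty}\cong\mathbb Z[U,U^{-1}]/(f(U))$; the vanishing therefore forces $f(U)$ to be a unit in $\mathbb Z[U,U^{-1}]$, and combined with the expansion $f(U)=\pm U+O(U^2)$ from (\ref{eq:U}), we conclude $f(U)=\pm U$ exactly.

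Next, the single-term differential $\widetilde\partial_{>0}\tensor x=\pm U\tensor y$ lands at bigrading $(-1,j_y-1)$, so the unique contributing disk has multiplicities $(n_z,n_w)=(1,j_x-j_y+1)$; non-negativity of $n_w$ yields $j_y\le j_x+1$. Applying the $\spinc$ conjugation symmetry $\widehat{HFK}(Y,K,\mathfrak s_{\min},A)\cong\widehat{HFK}(Y,K,\mathfrak s_{\max},-A)$ and running the identical analysis at $\mathfrak s_{\max}$ (which also satisfies $HF^+\cong\mathbb Z$ by conjugation from $\mathfrak s_{\min}$) gives generators at Alexander gradings $\{-j_x,-j_y\}$ with the companion inequality $j_{y'}\le j_{x'}+1$; since conjugation is a chain isomorphism identifying the ``killer'' $\tensor x$ with the killer $\tensor x'$ in the conjugate complex, the matching identification $j_{x'}=-j_x,\, j_{y'}=-j_y$ translates this into the reverse inequality $j_x\le j_y+1$, so altogether $|j_x-j_y|\le 1$.

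The main obstacle is to exclude the strict boundary cases $|j_x-j_y|=1$. In either such case the unique disk is horizontal $((n_z,n_w)=(1,0))$ or steeply off-diagonal $((n_z,n_w)=(1,2))$ rather than the diagonal $(1,1)$. I expect the contradiction to come from tracing this atypical disk through the knot spectral sequence $\widehat{HFK}\Rightarrow\widehat{HF}$: by Floer simplicity, $\mathrm{rank}\,\widehat{HF}(Y,\mathfrak s_{\min})=\mathrm{rank}\,\widehat{HFK}(Y,K,\mathfrak s_{\min})=2$, forcing the spectral sequence to degenerate at $E_1$, and the combination of this degeneration with $f(U)=\pm U$ being the \emph{entire} contribution of $\widetilde\partial_{>0}$ on $\tensor x$ rules out such off-diagonal or horizontal disks, yielding $j_x=j_y$ and completing the proof.
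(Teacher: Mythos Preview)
Your proof has a genuine gap: step~4 is explicitly left unfinished (``I expect the contradiction to come from\ldots''), and the boundary cases $|j_x-j_y|=1$ are never actually excluded. The mechanism you sketch---degeneration of the spectral sequence together with $f(U)=\pm U$---does not by itself rule out a horizontal or $(1,2)$ disk, so as written the argument does not close.

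The paper's proof avoids this detour entirely. The key point you are missing is that, because $K$ is null-homologous, $HF^+(Y,\mathfrak s_{\min})\cong\mathbb Z$ can be computed not only as $H_*(\widetilde H(i\ge0),\widetilde\partial_{>0})$ but also as $H_*(\widetilde H(j\ge0),\widetilde\partial_{>0})$. Writing the generators of $\widetilde H(j=0)$ as $U^a\tensor x,\,U^b\tensor y$ (so $a=j_x$, $b=j_y$), a direct rank count using only the leading-term condition~(\ref{eq:U}) shows that $H_*(\widetilde H(j\ge0))\cong\mathbb Z^{\,|a-b|+1}$ when $a\ge b$ and $H_*(\widetilde H(j\ge0))=0$ when $a<b$; hence $a=b$ with no intermediate case to eliminate. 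Once $a=b$, the $U\tensor y$ term of $\widetilde\partial_{>0}\tensor x$ supplies a disk with $n_z=n_w=1$, and subtracting the Heegaard surface $\Sigma$ yields a topological disk with $n_z=n_w=0$, placing the two generators in the same relative $\spinc$ structure.

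In fact your conjugation step is secretly the same idea---conjugation swaps $i$ and $j$, so ``the identical analysis at $\mathfrak s_{\max}$'' \emph{is} the $j\ge0$ computation---and if you use its full strength you are already done: the independent analysis at $\mathfrak s_{\max}$ gives $f'(U)=\pm U$, while the identification $\tensor x'=U^{j_x}\tensor x$, $\tensor y'=U^{j_y}\tensor y$ gives $f'(U)=\pm U^{\,j_x-j_y+1}$; comparing forces $j_x=j_y$. By extracting only the weaker inequality $j_{y'}\le j_{x'}+1$ you discarded exactly the information needed to finish. (Your step~1, that $HF^\infty(Y,\mathfrak s_{\min})=0$ forces $f(U)=\pm U$ on the nose, is correct but unnecessary for the paper's argument, which uses only~(\ref{eq:U}).)
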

\begin{proof}
Since $K$ is null-homologous, $HF^+(Y,\mathfrak s_{\min})\cong\mathbb Z$ can also be computed as $H_*(\widetilde H(j\ge0),\widetilde{\partial}_{>0})$. Suppose $\widetilde H(j=0)\cong\mathbb Z^2$ is generated by $U^a\tensor x, U^b\tensor y$ for $a,b\in\mathbb Z$. Since $H_*(\widetilde H(j\ge0),\widetilde{\partial}_{>0})\cong\mathbb Z$, it follows from (\ref{eq:U}) that $a=b$.

Since $\widetilde{\partial}_{>0}\tensor x$ involves $\tensor y$, there is a holomorphic disk $\phi$ connecting $\mathbf x_1$ to $\mathbf y_1$ for some $\mathbf x_1,\mathbf y_1\in\mathbb T_{\alpha}\cap\mathbb T_{\beta}$ such that $[\mathbf x_1,0,a]$ is a summand in a representative of $\tensor x$ and $[\mathbf y_1,0,a]$ is a summand in a representative of $\tensor y$.
By Equation (\ref{eq:U}), $n_z(\phi)=n_w(\phi)=1$. Then $\phi-\Sigma$ is a topological disk connecting $\mathbf x_1$ to $\mathbf y_1$ with $n_z=n_w=0$.
It follows that $\mathbf x_1$ and $\mathbf y_1$ represent the same relative Spin$^c$ structure, and so do $\tensor x$ and $\tensor y$.
\end{proof}

Theorem~\ref{thm:BundleCase} is implied by the next theorem.

\begin{thm}\label{thm:BundBottom}
Suppose that $Y$ is a surface bundle over $S^1$, with fiber $G$ a closed oriented surface of genus $>1$. Let $K\subset Y$ be a rationally null-homologous knot.
If $K$ is bottommostly Floer simple relative to $[G]$, then $K$ is the unknot.
\end{thm}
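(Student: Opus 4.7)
The analysis preceding the theorem establishes that $\widehat{HFK}(Y,K,\mathfrak{s}_{\min})\cong\mathbb{Z}^2$ is supported in a single relative Spin$^c$ structure $\xi_0$ (Lemma~\ref{lem:UniqMin}), with $\widetilde\partial_{>0}\tensor{x}=f(U)\tensor{y}$, $\widetilde\partial_{>0}\tensor{y}=0$, and $f(U)=\pm U+O(U^2)$. My plan is to upgrade this data algebraically, pin down the Alexander grading of the support via the adjunction inequality, and then invoke a genus-detection theorem for null-homologous knots.

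First, I would argue that $f(U)=\pm U$ exactly. Because $\langle c_1(\mathfrak{s}_{\min}),[G]\rangle=2-2g(G)\neq 0$, the class $c_1(\mathfrak{s}_{\min})$ is non-torsion, hence $HF^\infty(Y,\mathfrak{s}_{\min})=0$. Computing $HF^\infty$ from the rank-two complex $(\widetilde H,\widetilde\partial_{>0})$ yields $\mathbb{Z}[U,U^{-1}]/(f(U))$, and its vanishing forces $f(U)$ to be a unit in $\mathbb{Z}[U,U^{-1}]$; combined with the leading-term condition from Equation~(\ref{eq:U}), this gives $f(U)=\pm U$. The resulting knot Floer complex at $\mathfrak{s}_{\min}$ is isomorphic to that of the unknot in $Y$.

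Next, I would apply Proposition~\ref{prop:ExtremeSpinc} to the family of rational Seifert-like surfaces $F+m[G]$, where $F$ is a Seifert surface for $K$ and $m$ is a large positive integer. By the uniqueness of $\xi_0$ as the bottommost basic class in the direction $j_*[G]$ together with Thurston norm linearity, $\xi_0$ achieves the minimum $\langle c_1(\xi_0),[F+m[G]]\rangle = -\chi_-([F+m[G]])$, and hence $\langle c_1(\xi_0),[F]\rangle=-\chi_-([F])$. The conjugation symmetry of $\widehat{HFK}$ for null-homologous knots yields the analogous statement for the conjugate class $\bar\xi_0$, which is the unique basic class in $\bar{\mathfrak{s}}_{\min}$ and achieves the maximum $\chi_-([F])+2$ on $[F]$. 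These extremal identifications pin down the Alexander support in both $\mathfrak{s}_{\min}$ and $\bar{\mathfrak{s}}_{\min}$ to lie at Alexander grading zero. Invoking the Ozsv\'ath--Szab\'o--Ni genus-detection theorem for null-homologous knots in irreducible $3$-manifolds then gives $g(K)=0$; since $Y\setminus\nu(K)$ is irreducible, $K$ bounds a disk and is therefore the unknot.

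The main obstacle is propagating the Alexander-zero conclusion from the extremal Spin$^c$ structures $\mathfrak{s}_{\min},\bar{\mathfrak{s}}_{\min}$ to the remaining Spin$^c$ structures of $Y$, as is required by the genus-detection statement. I expect this to follow from a sutured Floer decomposition of the knot complement along the fiber $G$ (using $K\cap G=\emptyset$), combined with a product-detection argument in the style of Juh\'asz and Ni, or from a direct application of Gabai's internal hierarchy developed in Section~\ref{Sect:Sketch}.
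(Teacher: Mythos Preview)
Your proposal has a genuine gap, and the paper's proof avoids it by a much more direct route.

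The paper never invokes a genus-detection theorem and never looks at any Spin$^c$ structure other than $\mathfrak s_{\min}$. Instead it observes that, since $\xi_{\min}$ is the \emph{unique} relative basic class over $\mathfrak s_{\min}$ (Lemma~\ref{lem:UniqMin}), the Lemma~\ref{lem:LowerSub}-type argument forces $\xi_{\min}$ to be the unique bottommost basic class for \emph{both} $n[G]+[F]$ and $n[G]-[F]$ when $n$ is large. Applying Proposition~\ref{prop:ExtremeSpinc} to the Seifert-like class $n[G]+[F]$ gives $\langle c_1(\xi_{\min}),n[G]+[F]\rangle=-\chi_-(n[G]+[F])$, while applying the \emph{max} formula of the same proposition to the Seifert-like class $-(n[G]-[F])$ (for which $\xi_{\min}$ is now topmost) gives $\langle c_1(\xi_{\min}),n[G]-[F]\rangle=-\chi_-(n[G]-[F])-2$. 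Adding these and using subadditivity of $\chi_-$ yields $-2n\chi_-(G)\le -2n\chi_-(G)-2$, a contradiction. No conjugate class $\bar\xi_0$, no other Spin$^c$ structure, no $f(U)=\pm U$ refinement is needed.

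Your route through genus detection cannot be completed with the hypotheses at hand, and the obstacle you flag is fatal rather than technical. ``Bottommostly Floer simple relative to $[G]$'' imposes no constraint whatsoever on $\widehat{HFK}(Y,K,\mathfrak s)$ for $\mathfrak s$ with $\langle c_1(\mathfrak s),[G]\rangle>2-2g(G)$, so you have no control over the top Alexander grading there; the Ozsv\'ath--Szab\'o/Ni genus bound therefore does not apply. Your proposed fixes do not help: the internal hierarchy of Section~\ref{Sect:Sketch} is used precisely to reduce the general problem \emph{to} the surface-bundle case, so invoking it here is circular; and a sutured decomposition along $G$ again only sees the extremal Spin$^c$ structure. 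There is also a slip in your step~4: from $\langle c_1(\xi_0),[F]\rangle=-\chi_-([F])$ you get that $\xi_0$ sits at the \emph{bottom} Alexander grading, not at grading zero; the conjugate $\bar\xi_0$ lives over $\bar{\mathfrak s}_{\min}\ne\mathfrak s_{\min}$, so pairing the two does not pin the support of $\widehat{HFK}(Y,K,\mathfrak s_{\min})$ to zero. The paper's trick is to extract both the min and the (shifted) max from the \emph{same} class $\xi_{\min}$ by varying the homology class it is evaluated against.
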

\begin{proof}
Without loss of generality, we may assume $K$ is null-homologous. It follows from Lemma~\ref{lem:UniqMin} that $c_1(\xi_{\min})$ is the unique bottommost basic class relative to $[G]$. Let $F$ be a minimal genus Seifert surface for $K$. As argued in Lemma~\ref{lem:LowerSub}, $c_1(\xi_{\min})$ is also the unique bottommost class for $n[G]\pm[F]$ when $n\in \mathbb Z$ is sufficiently large. Assume that $K$ is not the unknot, we have
\begin{eqnarray*}
-2n\chi_-(G)&=&\langle c_1(\xi_{\min}),2n[G]\rangle\\
&=&\langle c_1(\xi_{\min}),n[G]+[F]\rangle+\langle c_1(\xi_{\min}),n[G]-[F]\rangle\\
&=&-\chi_-(n[G]+[F])+(-\chi_-(n[G]-[F])-2)\\
&\le&-\chi_-(2n[G])-2\\
&=&-2n\chi_-(G)-2,
\end{eqnarray*}
a contradiction. The third equality in the above computation uses Proposition~\ref{prop:ExtremeSpinc}. We note that $-(n[G]-[F])$ is the homology class of a Seifert-like surface for $K$, and $c_1(\xi_{\min})$ is the unique topmost class relative to $-(n[G]-[F])$. So $\langle c_1(\xi_{\min}),-(n[G]-[F])\rangle=\chi_-(-(n[G]-[F]))+2$ by the second equality in Proposition~\ref{prop:ExtremeSpinc}.
\end{proof}

In the statement of Theorem~\ref{thm:BundleCase}, we require that the genus of the fiber $G$ is greater than $1$. If $g(G)=1$, we can get the same conclusion in some cases. For example, if $Y=T^3$, then $\widehat{HF}(Y)$ is supported in two absolute gradings $+\frac12,-\frac12$ \cite[Proposition~8.4]{OSzAbGr}. If $K\subset T^3$ is a Floer simple knot, then $\widehat{HFK}(Y,K)$ is also supported in these two absolute gradings. Using \cite[Proposition~3.10]{OSzKnot}, we see that the genus of $K$ must be zero, so $K$ is the unknot. Another way to see this is to use Lemma~\ref{lem:DisjointTaut}, which implies that any essential torus in $Y$ can be isotoped to be disjoint from $K$. So $K$ lies in a $3$--ball. Then we can use \cite{OSzGenus} to conclude that $K$ is the unknot.


\begin{thebibliography}{H}

\bibitem{AiNi}{\bf Y. Ai, Y. Ni}, {\it Two applications of twisted Floer
homology}, Int. Math. Res. Not. 2009 (2009), no. 19, 3726--3746.


\bibitem{ET}{\bf Y. Eliashberg, W. Thurston,}
{\it Confoliations,}  University Lecture Series, 13. American
Mathematical Society, Providence, RI, 1998.

\bibitem{G1}{\bf D. Gabai}, {\it Foliations and the topology of $3$--manifolds}, J. Differential Geom. 18 (1983), no. 3,
445--503.

\bibitem{GabaiHier}{\bf D. Gabai}, {\it An internal hierarchy for $3$--manifolds}, Knot theory and manifolds (Vancouver, B.C., 1983), 14--17, Lecture Notes in Math., 1144, Springer, Berlin, 1985.


\bibitem{GabaiFibred}{
\bf D. Gabai}, {\it Detecting fibred links in $S^3$}, Comment. Math. Helv. 61 (1986), no. 4, 519--555.

\bibitem{G2}{\bf D. Gabai}, {\it Foliations and the topology of $3$--manifolds II}, J. Differential Geom. 26 (1987), no. 3, 461--478.

\bibitem{Gh}{\bf P. Ghiggini}, {\it Knot Floer homology detects genus-one fibred
knots},  Amer. J. Math.  130  (2008),  no. 5, 1151--1169.

\bibitem{HedCable}{\bf M. Hedden}, {\it On knot Floer homology and cabling}, Algebr. Geom. Topol. 5 (2005), 1197--1222.

\bibitem{HedBerge}{\bf M. Hedden}, {\it On Floer homology and the Berge conjecture
on knots admitting lens space surgeries}, Trans. Amer. Math. Soc. 363  (2011), 949--968. 


\bibitem{Ju1}{\bf A. Juh\'asz}, {\it Holomorphic discs and sutured
manifolds}, Algebr. Geom. Topol.  6  (2006), 1429--1457.


\bibitem{Ju2}{\bf A. Juh\'asz}, {\it Floer homology and surface
decompositions}, Geom. Topol.  12  (2008), 299--350.

\bibitem{KMNorm}{\bf P. Kronheimer, T. Mrowka}, {\it Scalar curvature and the Thurston norm},
Math. Res. Lett. 4 (1997), no. 6, 931--937.

\bibitem{KMSuture}{\bf P. Kronheimer, T. Mrowka}, {\it Knots, sutures and
excision}, J. Differential Geom. 84 (2010), no. 2, 301--364.

\bibitem{Lekili}
{\bf Y. Lekili},
{\it Heegaard Floer homology of broken fibrations over the circle},
Adv. Math. 244 (2013), 268--302.


\bibitem{Mey}{\bf M. Meyerson}, {\it Representing homology classes of closed orientable
surfaces}, Proc. Amer. Math. Soc.  61  (1976), no. 1, 181--182.

\bibitem{NiSuturedD}{\bf Y. Ni},  {\it Sutured Heegaard diagrams for knots}, Algebr. Geom. Topol. 6
(2006), 513--537.

\bibitem{NiFibred}{\bf Y. Ni}, {\it Knot Floer homology detects fibred
knots}, Invent. Math. 170 (2007), no. 3, 577--608.

\bibitem{NiRatSphere}{\bf Y. Ni}, {\it Link Floer homology detects the Thurston norm},
Geom. Topol. 13 (2009), 2991--3019.


\bibitem{NiClosedFib}{\bf Y. Ni}, {\it Heegaard Floer homology and fibred
$3$--manifolds}, Amer. J. of Math. 131 (2009), 1047--1063.

\bibitem{NiNormCos}{\bf Y. Ni}, {\it Thurston norm and cosmetic surgeries},
Low-dimensional and Symplectic Topology, Proceedings of Symposia in Pure Mathematics, vol. 82, Amer. Math. Soc., Providence, RI, (2011), 53--63.

\bibitem{NiBorro}{\bf Y. Ni}, {\it Homological actions on sutured Floer homology}, preprint (2010), available at arXiv:1010.2808.

\bibitem{OSzAnn1}{\bf P. Ozsv\'ath, Z. Szab\'o}, {\it Holomorphic disks and topological invariants for closed
three-manifolds}, Ann. of Math. (2), 159 (2004), no. 3,
1027--1158.

\bibitem{OSzAnn2}{\bf P. Ozsv\'ath, Z. Szab\'o}, {\it Holomorphic disks and three-manifold invariants: properties and
applications}, Ann. of Math. (2), 159 (2004), no. 3, 1159--1245.

\bibitem{OSzAbGr}{\bf P. Ozsv\'ath, Z. Szab\'o}, {\it Absolutely graded Floer homologies and intersection forms for four-manifolds with boundary}, Adv. Math.  173  (2003),  no. 2, 179--261.

\bibitem{OSzKnot}{\bf P. Ozsv\'ath, Z. Szab\'o}, {\it Holomorphic disks and knot invariants},
Adv. Math. 186 (2004), no. 1, 58--116.

\bibitem{OSzGenus}{\bf P. Ozsv\'ath, Z. Szab\'o}, {\it Holomorphic disks and genus bounds},
Geom. Topol. 8 (2004), 311--334.

\bibitem{OSzLink}{\bf P. Ozsv\'ath, Z. Szab\'o}, {\it Holomorphic disks, link
invariants and the multivariable Alexander polynomial}, Algebr.
Geom. Topol. 8 (2008), no. 2, 615--692.

\bibitem{OSzLinkNorm}{\bf P. Ozsv\'ath, Z. Szab\'o}, {\it Link Floer homology and the Thurston norm}, J. Amer. Math. Soc. 21 (2008), no. 3, 671--709. 

\bibitem{OSzRatSurg}{\bf P. Ozsv\'ath, Z. Szab\'o}, {\it Knot Floer homology and rational
surgeries}, Algebr. Geom. Topol. 11 (2011), no. 1, 1--68.

\bibitem{RasThesis}{\bf J. Rasmussen}, {\it Floer homology and knot complements}, PhD Thesis,
Harvard University (2003), available at arXiv:math.GT/0306378.

\bibitem{RasBerge}{\bf J. Rasmussen}, {\it Lens space surgeries and L-space homology
spheres}, preprint (2007), available at arXiv:0710.2531.

\bibitem{Sch}{\bf J. Schafer}, {\it Representing homology classes on surfaces},  Canad. Math. Bull.
19 (1976), no. 3, 373--374.

\bibitem{Th}{\bf W. Thurston}, {\it A norm for the homology of $3$--manifolds}, Mem. Amer. Math.
Soc. 59 (1986), no. 339, i--vi and 99--130.


\end{thebibliography}
\end{document}